\numberwithin{equation}{section}    
\theoremstyle{plain}
\newtheorem{thm}{Theorem}[section]
\newtheorem{lem}[thm]{Lemma}
\newtheorem{prop}[thm]{Proposition}
\newtheorem{cor}[thm]{Corollary}
\newtheorem{conj}[thm]{Conjecture}
\theoremstyle{definition}
\newtheorem{defn}[thm]{Definition}
\theoremstyle{remark}
\newtheorem{rem}[thm]{Remark}
\newtheorem*{rem*}{Remark}
\newtheorem*{ack}{Acknowledgements}
\newcommand{\bs}{\boldsymbol}
\newcommand{\be}{\begin{equation}}    
\newcommand{\ee}{\end{equation}}    
\newcommand{\beu}{\begin{equation*}}    
\newcommand{\eeu}{\end{equation*}}    
\newcommand{\bea}{\begin{eqnarray}}    
\newcommand{\eea}{\end{eqnarray}}    
\newcommand{\beaa}{\begin{eqnarray*}}    
\newcommand{\eeaa}{\end{eqnarray*}}    
\newcommand{\bmx}{\begin{pmatrix}}    
\newcommand{\emx}{\end{pmatrix}}
\newcommand{\g}{{\mathfrak g}}    
\newcommand{\h}{{\mathfrak h}}
\newcommand{\mf}{\mathfrak}
\newcommand{\mc}{\mathcal}    
\newcommand{\al}{{\alpha}}    
\newcommand{\gh}{{\widehat \g}}
\newcommand{\alf}{{\textstyle{\frac{1}{2}}}}
\newcommand{\nn}{\nonumber}
\newcommand{\8}{{\infty}}
\newcommand{\eps}{\epsilon}
\newcommand{\rank}{{\rm rank}}
\newcommand{\Z}{{\mathbb Z}}
\newcommand{\N}{{\mathbb N}}
\newcommand{\C}{{\mathbb C}}
\newcommand{\Q}{{\mathcal Q}}
\newcommand{\id}{{\mathrm{id}}}
\newcommand{\uq}{{U_q}}
\newcommand{\uqslth}{{\uq(\widehat{\mathfrak{sl}}_2})}
\newcommand{\uqslt}{{\uq(\mf{sl}_2)}}    
\newcommand{\uqgh}{{\uq(\widehat\g)}}    
\newcommand{\uqg}{{\uq(\g)}}
\newcommand{\A}{\mathbf A}
\newcommand{\goi}[2]{=}
\newcommand{\Ih}{\widehat I}
\newcommand{\on}{.}    
\newcommand{\groth}[1]{{\mathrm{Groth}(#1)}}    
\newcommand{\Cx}{\mathbb C^\times}
\newcommand{\qnum}[1]{\left[ #1\right]_q}
\renewcommand{\binom}[2]{\begin{bmatrix} #1 \\ #2 \end{bmatrix}}
\newcommand{\qbinom}[2]{\binom{#1}{#2}_q}
\newcommand{\btp}{\begin{tikzpicture}[baseline=0pt,scale=0.9,line width=0.25pt]}    
\newcommand{\etp}{\end{tikzpicture}}    
\newcommand{\Roff}{\color{black}}
\renewcommand{\L}{L}
\newcommand{\atp}[1]{}
\newcommand{\path}{\longrightarrow}
\DeclareMathOperator{\wt}{wt}
\DeclareMathOperator{\hgt}{height}
\DeclareMathOperator{\ev}{ev}
\DeclareMathOperator{\Span}{span}
\DeclareMathOperator{\codim}{codim}
\DeclareMathOperator{\Ob}{Ob}
\newcommand{\cat}{\mathcal O}
\newcommand{\cinv}{\varphi}
\newcommand{\la}{\lambda}
\newcommand{\om}{\omega}
\author{E. Mukhin} 
\address{\vspace{-.15cm} Department of Mathematical Sciences, 402
  N. Blackford St, LD 270, IUPUI, Indianapolis, IN 46202, USA.  }
\email{mukhin@math.iupui.edu}
\author{C. A. S. Young}
\address{\vspace{-.15cm} School of Physics, Astronomy and Mathematics, University of Hertfordshire, College Lane, Hatfield AL10 9AB, UK.}  \email{charlesyoung@cantab.net}
\begin{document}
\title{Affinization of category $\cat$ for quantum groups}
\begin{abstract} 
Let $\g$ be a simple Lie algebra. We consider the category $\hat\cat$ of those modules over the affine quantum group $\uqgh$ whose $\uqg$-weights have finite multiplicity and lie in a finite union of cones generated by negative
roots.
We show that many properties of the category of the finite-dimensional representations naturally extend to the category $\hat\cat$. In particular, we develop the theory of $q$-characters and 
define the minimal affinizations of parabolic Verma modules. In types ABCFG we classify these minimal affinizations and conjecture a Weyl denominator type formula for their characters.
\end{abstract}
\maketitle
\section{Introduction}
Let $\g$ be a simple Lie algebra and $q\in \Cx$ transcendental. In this paper we consider the category $\hat\cat$ of modules over the affine quantum group $\uqgh$ such that after the restriction to $\uqg$ the dimensions of the weight spaces are finite, and the set of non-trivial weights belongs to a finite union of cones generated by negative
roots. This category was originally defined in \cite{HernandezFusion}. It is a tensor category which includes the finite-dimensional modules. The simple objects in $\hat\cat$ are highest weight $\uqgh$-modules with highest $\ell$-weights given by arbitrary sets of rational functions $(f_i)_{i\in I}$ with the property $f_i(0)f_i(\infty)=1$, $i\in I$, $I$ being the set of nodes of the Dynkin diagram of $\g$ (see Theorem \ref{thm1} below). Our motivation for the study of $\hat\cat$ is twofold.  

\medskip

First, many results from the category of finite-dimensional $\uqgh$-modules can be easily extended to the much richer category $\hat\cat$. For example, we have in $\hat\cat$ the classification of irreducibles by highest $\ell$-weights, and the notions of fundamental modules, Kirillov-Reshetikhin modules and minimal affinizations. In type $sl_2$, the irreducible modules are tensor products of evaluation modules. We define a theory of $q$-characters which gives an injective ring homomorphism from the Grothendieck ring of $\hat\cat$ to a certain formal ring possessing many 
properties which allow us to study it combinatorially.

Second, we are trying to find a new way to study the minimal affinizations of the finite-dimensional modules. Minimal affinizations, which are analogs of the evaluation modules that exist only in type A, received a lot of attention, see \cite{CP94,Cminaffrank2,CPminaffBCFG,CPminaffireg,CPminaffADE,HernandezMinAff,Moura,MY1,MY2} but are still poorly understood in general.  In the non-affine setting, important information about the finite-dimensional modules comes from the study of Verma modules, which have a much simpler structure. Inspired by this idea we initiate the study of minimal affinizations of Verma modules, which naturally leads us to the category $\hat\cat$.

\medskip

We establish the foundations of the theory of the category $\hat\cat$, for the most part 
modifying the well-known methods initially developed by many authors for the finite-dimensional modules. As one notable exception, we give a proof of the classification of the minimal affinizations somewhat different from the classical papers \cite{Cminaffrank2,CPminaffBCFG}; see Theorem \ref{lathm}.  We use the theory of $q$-characters and treat types ABCFG simultaneously. In these types a minimal affinization is not only a \emph{minimal} element with respect to the partial order defined in \cite{Cminaffrank2}, but the \emph{least} element. In this paper we do not the consider the types, D and E, whose diagrams have a trivalent node.

\medskip

Our main finding is that the minimal affinizations of the generic parabolic Verma modules (and many other modules) considered as $\uqg$-modules indeed have a simple character similar to the Weyl denominator. For example, if $\la=\sum_{i\in I} \la_i\om_i$ is a $\g$-weight written in terms of the fundamental weights and none of $\la_i$ is an integer, we conjecture that the character of the minimal affinization of the Verma module with highest weight $\la$ is given by
$$
\chi_\la=e^\la\prod_{\al\in \Delta^+}\frac{1}{(1-e^{\al})^{m_\al}}\ ,
$$
where $\Delta^+$ is the set of positive roots of $\g$ and for a positive root
$\al=\sum_{i\in I}\al_i\omega_i$ we define  $m_\al=\max\limits_{i\in I}\{\al_i\}\in\Z_{\geq 1}$. 

This formula and many similar formulae, see Conjecture \ref{charconj},
were found and partially checked with the help of a computer based on the use of the algorithm of \cite{FM}. We give proofs only in some special cases, e.g. in types $A_n$, $B_2$, based on known results for finite-dimensional modules, but the simplicity of the answer suggests that a general proof may be not very difficult. 

\medskip

We would like to acknowledge the paper \cite{HJ} where the authors studied the
stable limits of the Kirillov-Reshetikhin modules, which are minimal affinization of finite-dimensional modules with highest weights $\la=n\omega_i$, as $n\to\infty$. Those limits are representations of an algebra which is slightly different from the standard quantum affine group. Instead of going to that limit we study the analytic continuation with respect to $n$, see \S\ref{acsec}. That, in particular, allows us to stay with the standard quantum affine group.

\medskip

The paper is structured as follows.
After summarizing background material in \S2, in \S3 we define the category $\hat\cat$, classify its simple objects (Theorem \ref{thm1}), and develop the theory of $q$-characters for $\hat\cat$. 
We also briefly discuss analytic continuation (\S\ref{acsec}) and the restricted duals of objects in $\hat\cat$ (\S\ref{dualsec}). 
In \S4 we consider the case $\g=\mf{sl}_2$ and give a description of the simple objects in $\hat\cat$ in terms of tensor products of evaluation modules. 
Affinizations, and in particular minimal and least affinizations, are introduced in \S5. Theorem \ref{lathm} classifies least affinizations in types ABCFG.
Finally, \S6 contains a series of conjectural formulae for the $\uqg$-characters of least affinizations of parabolic Verma modules, and of certain other representations.  
 
\begin{ack} 
From October 2010 until December 2011, the research of CASY was funded by the EPSRC, grant number EP/H000054/1. The research of EM is supported by the NSF, grant number DMS-0900984.
Computer programs to calculate $q$-characters were written in FORM \cite{FORM}.
\end{ack}

\section{Background}\label{qcharsec}
\subsection{Cartan data}
Let $\g$ be a complex simple Lie algebra of rank $N$ and $\h$ a Cartan subalgebra of $\g$. We identify $\h$ and $\h^*$ by means of the invariant inner product $\left<\cdot,\cdot\right>$  on $\g$ normalized such that the square length of the maximal root equals 2.  With $I=\{1,\dots,N\}$, let $\{\alpha_i\}_{i\in I}$ be a set of simple roots, and $\{\alpha^\vee_i\}_{i\in I}$, $\{\omega_i\}_{i\in   I}$, $\{\omega^\vee_i\}_{i\in I}$ the corresponding sets of, respectively, simple coroots, fundamental weights and fundamental coweights. Let $C=(C_{ij})_{i,j\in I}$ be the Cartan matrix. We have 
\be\nn 2 \left< \alpha_i, \alpha_j\right> = C_{ij} \left<   \alpha_i,\alpha_i\right>,\quad 2 \left< \alpha_i, \omega_j\right> = \delta_{ij}\left<\alpha_i,\alpha_i\right>, \quad \left<\omega^\vee_i,\alpha_j\right> = \delta_{ij} = \left<\alpha^\vee_i,\omega_j\right>.\ee 
Let $r^\vee$ be the maximal number of edges connecting two vertices of the Dynkin diagram of $\g$. Thus $r^\vee=1$ if $\g$ is of types A, D or E, $r^\vee = 2$ for types B, C and F and $r^\vee=3$ for $\mathrm G_2$.  Let $r_i= \alf r^\vee \left<\alpha_i,\alpha_i\right>$. The numbers $(r_i)_{i\in   I}$ are relatively prime integers. We set \be\nn D:= \mathrm{diag}(r_1,\dots,r_N),\qquad B := DC;\ee the latter is the symmetrized Cartan matrix, $B_{ij} = r^\vee \left<\alpha_i,\alpha_j\right>$.
 
Let $Q$ (resp. $Q^+$) and $P$ (resp. $P^+$) denote the $\Z$-span (resp. $\Z_{\geq 0}$-span) of the simple roots and fundamental weights respectively. Let $\leq$ be the partial order on $\h^*$ (and in particular on $P$ and $Q$) in which $\lambda\leq \lambda'$ if and only if $\lambda'-\lambda\in Q^+$. If $\eta=\sum_{i\in I} m_i \alpha_i\in Q^+$, define $\hgt(\eta) = \sum_{i\in I} m_i$. 
 
Let $\Delta\subset Q$ be the set of roots of $\g$ and $\Delta^+ = \Delta\cap Q^+$ the set of positive roots.

Let $\gh$ denote the untwisted affine algebra corresponding to $\g$. 
Let $\widehat C= (C_{ij})_{i,j\in \{0\} \cup I}$ be the extended Cartan matrix, $\alpha_0$ be the extra simple root of $\gh$, $r_0= \alf r^\vee \left< \alpha_0,\alpha_0 \right>$, $\widehat D= \mathrm{diag}(r_0,r_1,\dots,r_N)$ and $\widehat B = \widehat D \widehat C$.

Fix a transcendental $q \in \Cx$. For each $i\in I$ let \be q_i:= q^{r_i}.\nn\ee 
Define the $q$-numbers, $q$-factorial and $q$-binomial: \be\nn \qnum n := \frac{q^n-q^{-n}}{q-q^{-1}},\quad \qnum n ! := \qnum n \qnum{n-1} \dots \qnum 1,\quad \qbinom n m := \frac{\qnum n !}{\qnum{n-m} ! \qnum m !}.\ee

\subsection{Quantum Affine Algebras}
The \emph{quantum affine algebra} $\uqgh$ in the Drinfeld-Jimbo realization \cite{Drinfeld1, Jimbo} is the unital associative algebra over $\mathbb C$ with generators $(x_i^\pm)_{i\in\Ih}$, $(k_i^{\pm 1})_{i\in \Ih}$ subject to the relations
\begin{align} \label{chevgens}
  k_ik_i^{-1} = k_i^{-1}k_i &=1,\quad \quad k_ik_j =k_jk_i,\\
  k_ix_j^{\pm}k_i^{-1} &= q^{\pm \widehat B_{ij}}x_j^{\pm},\nn\\ 
[x_i^+ , x_j^-] &= \delta_{ij}\frac{k_i - k_i^{-1}}{q_i -q_i^{-1}},\nn\\
  \sum_{r=0}^{1-C_{ij}}(-1)^r\left[\begin{array}{cc} 1-\widehat C_{ij}\\ r \end{array} \right]_{q_i}
  (x_i^{\pm})^rx_j^{\pm}&(x_i^{\pm})^{1-\widehat C_{ij}-r} =0, \ \ \ \
  i\ne j\nn.
\end{align}
The algebra $\uqgh$ can be endowed with the coproduct, antipode and counit given by
\bea \Delta(k_i) &=& k_i \otimes k_i, \nn\\
\Delta(x^+_i) &=& x^+_i \otimes k_i + 1 \otimes x^+_i,\nn\\ \Delta(x^-_i)
              &=& x^-_i \otimes 1 + k_i^{-1} \otimes x^-_i, \nn \eea 
\be S(x_i^+) =
-x_i^+ k^{-1}_i,\qquad S(x_i^-) = - k_i x_i^-,\qquad S(k_i^{\pm 1}) =
k_i^{\mp 1}, \nn\ee \be \eps(k_i) = 1,\quad \eps(x_i^+) = \eps(x_i^-) = 0,
\nn\ee 
making it into a Hopf algebra.

There exists another presentation of  $\uqgh$, due to Drinfeld \cite{Drinfeld}. In this presentation $\uqgh$ is generated by $x_{i,n}^{\pm}$ ($i\in I$, $n\in\Z$), $k_i^{\pm 1}$ ($i\in I$), $h_{i,n}$ ($i\in I$, $n\in \Z\setminus\{0\}$) and central elements $c^{\pm 1/2}$, subject to the following relations:
\begin{align}
  k_ik_j = k_jk_i,\quad & k_ih_{j,n} =h_{j,n}k_i,\nn\\
  k_ix^\pm_{j,n}k_i^{-1} &= q^{\pm B_{ij}}x_{j,n}^{\pm},\nn\\
 \label{hxpm} [h_{i,n} , x_{j,m}^{\pm}] &= \pm \frac{1}{n} [n B_{ij}]_q c^{\mp
    {|n|/2}}x_{j,n+m}^{\pm},\\ 
x_{i,n+1}^{\pm}x_{j,m}^{\pm} -q^{\pm B_{ij}}x_{j,m}^{\pm}x_{i,n+1}^{\pm} &=q^{\pm
    B_{ij}}x_{i,n}^{\pm}x_{j,m+1}^{\pm}
  -x_{j,m+1}^{\pm}x_{i,n}^{\pm},\nn\\ [h_{i,n},h_{j,m}]
  &=\delta_{n,-m} \frac{1}{n} [n B_{ij}]_q \frac{c^n -
    c^{-n}}{q-q^{-1}},\nn\\ [x_{i,n}^+ , x_{j,m}^-]=\delta_{ij} & \frac{
    c^{(n-m)/2}\phi_{i,n+m}^+ - c^{-(n-m)/2} \phi_{i,n+m}^-}{q_i -
    q_i^{-1}},\nn\\
  \sum_{\pi\in\Sigma_s}\sum_{k=0}^s(-1)^k\left[\begin{array}{cc} s \nn\\
      k \end{array} \right]_{q_i} x_{i, n_{\pi(1)}}^{\pm}\ldots
  x_{i,n_{\pi(k)}}^{\pm} & x_{j,m}^{\pm} x_{i,
    n_{\pi(k+1)}}^{\pm}\ldots x_{i,n_{\pi(s)}}^{\pm} =0,\ \
  s=1-C_{ij},\nn
\end{align}
for all sequences of integers $n_1,\ldots,n_s$, and $i\ne j$, where $\Sigma_s$ is the symmetric group on $s$ letters, and $\phi_{i,n}^{\pm}$'s are determined by equating coefficients of powers of $u$ in the formula 
\begin{equation} \label{phidef} \phi_i^\pm(u) :=
  \sum_{n=0}^{\infty}\phi_{i,\pm n}^{\pm}u^{\pm n} = k_i^{\pm 1}
  \exp\left(\pm(q-q^{-1})\sum_{m=1}^{\infty}h_{i,\pm m} u^{\pm
      m}\right).
\end{equation}
Note that $\phi^+_{i,-n}=\phi^-_{i,n}=0$ for all $n\in \Z_{>0}$, and $\phi^\pm_{i,0} = k_{i}^{\pm 1}$. 

We have $x^\pm_{i,0} = x^\pm_i$ for all $i\in I$. 

The subalgebra of $\uqgh$ generated by $(k_i)_{i\in I}$, $(x^\pm_{i})_{i\in I}$ is a Hopf subalgebra of $\uqgh$ and is isomorphic as a Hopf algebra to $\uqg$, the quantized enveloping algebra of $\g$.
In this way, $\uqgh$-modules restrict to $\uqg$-modules. 
The Cartan involution of $\uqg$ is defined by
\be \cinv(x_i^\pm) = -x_i^\mp, \quad \cinv(k_i^{\pm 1}) = k_i^{\mp 1},\quad i\in I.\label{cinvdef}\ee

We shall need the following quantum-affine analog $\hat\cinv$ of the Cartan involution. By definition, \cite{Cminaffrank2}, $\hat\cinv$ is the algebra automorphism whose action on generators is:
\be \hat\cinv(x_{i,r}^\pm) = -x_{i,-r}^\mp, \quad \hat\cinv(h_{i,r}) = -h_{i,-r}, \quad \hat\cinv(k_i^{\pm1}) = k_i^{\mp1},\quad \hat\cinv(c^{\pm1/2}) = c^{\mp1/2} .\label{cinvhatdef}\ee
Note that
\be \hat\cinv\left(\phi^\pm(u)\right) = \phi^\mp(u^{-1})\label{cinvhatphi}.\ee

Let $\hat U^\pm\subset \uqgh$ be the subalgebras generated by $(x^\pm_{i,r})_{i\in I, r\in \Z}$, and $U^\pm\subset \uqg$ the subalgebras generated by $(x^\pm_i)_{i\in I}$.
Let $\hat U^0\subset\uqgh$ be the subalgebra generated by $c^{\pm1/2}$, $(k_i)_{i\in I}$ and $(h_{i,r})_{i\in I,r\in \Z_{\neq 0}}$, and $U^0\subset\uqg$ the subalgebra generated by $(k_i)_{i\in I}$. We have the following triangular decompositions of $\uqg$ and of $\uqgh$ \cite{CPbook}: 
\begin{align} \uqgh &= \hat U^- . \hat U^0 . \hat U^+ \label{uqghtriang}\\
              \uqg  &= U^- . U^0 . U^+. \label{uqgtriang}\end{align}

It is known \cite{Dam} that on representations of $\uqgh$ on which $c$ acts as the identity, 
\be \Delta \phi_i^\pm(u) = \phi_i^\pm(u) \otimes \phi_i^\pm(u) \mod \hat U^- \otimes \hat U^+.\label{damiani}\ee



\section{The category $\hat\cat$}
\subsection{Definition of $\hat\cat$}
Let $\overline{\phantom Q}:\h^* \to (\Cx)^N$ be the surjective homomorphism of abelian groups  such that 
\be \overline{\sum_{i\in I}\lambda_i \omega_i} := (q_1^{\lambda_1},q_2^{\lambda_2},\dots,q_N^{\lambda_N}).\label{bardef}\nn\ee 
By a slight overloading, we use the word \emph{weight} to refer to an $N$-tuple \be\nn\varrho =(\varrho_i)_{i\in I}\in \overline{\h^*}\equiv (\Cx)^N.\ee
Since $q$ is not a root of unity, the restrictions of $\overline{\phantom P}$ to $P$ and in particular to $Q^+$ are injective; let $\overline{P}$ and $\overline{Q^+}$ denote their respective images. Then $\overline{\h^*}$ inherits from $\h^*$ the usual partial order:
\be \varrho \leq \varrho' \Leftrightarrow \varrho^{-1}\varrho' \in \overline{Q^+}.\label{pop}\ee

A $\uqg$-module $V$ is a \emph{weight module} if
\be V = \bigoplus_{\varrho\in \overline{\h^*}} V_\varrho\,\,\,,\qquad\quad 
V_\varrho = \{ v \in V: k_i \on v = \varrho_i v\}.\label{gwts}\ee 
We call $V_\varrho$ the \emph{weight space} of weight $\varrho$, and nonzero elements $v\in V_\varrho$ \emph{weight vectors} of weight $\varrho$. 
We say $\varrho\in \overline{\h^*}$ is a \emph{weight of $V$} if $\dim V_\varrho>0$. 

\begin{defn}\label{catdef} We say a $\uqg$-module $V$ is in category $\cat$ if:
\begin{enumerate}[(i)]
\item\label{fw} $V$ is a weight module all of whose weight spaces are finite-dimensional.
\item\label{fgen} There exist a finite number of weights $\varrho_1, \dots, \varrho_k\in \overline{\h^*}$ such that every weight of $V$ is in $\bigcup_{j=1}^k \left\{\varrho_jx^{-1}: x\in \overline{Q^+}\right\}$. 
\end{enumerate}
\end{defn}

Let us define an \emph{$\ell$-weight module} to be any $\uqgh$-module on which the actions of the generators $(h_{i,r})_{i\in I,r\in \Z_{\neq 0}}$ commute pairwise. 
\begin{prop}\label{c1prop}
Every simple $\uqgh$-module $V$ whose restriction as a $\uqg$-module is in $\cat$ is an $\ell$-weight module. Moreover it can be obtained by twisting, by an automorphism of $\uqgh$, a module in which $c^{1/2}$ acts as the identity. 
\end{prop}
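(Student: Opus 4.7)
The plan is as follows. First I will show that $c^{1/2}$ acts on $V$ as a scalar, using Schur's lemma. Under the hypotheses of Definition \ref{catdef}, $V$ has at most countable dimension---its weights lie in a finite union of downward cones indexed by the countable set $Q^+$, and every weight space is finite-dimensional---so the central element $c^{1/2}$ must act by some scalar $\zeta\in\Cx$ on all of $V$.

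The key step is to show that $\zeta^4=1$. Since $[h_{i,n},k_j]=0$, each $h_{i,n}$ preserves every $\uqg$-weight space $V_\varrho$, which is finite-dimensional by hypothesis. By the Drinfeld relation,
\[
[h_{i,n},h_{j,-n}] \;=\; \frac{1}{n}\qnum{nB_{ij}}\,\frac{c^n-c^{-n}}{q-q^{-1}},
\]
the left-hand side is central and so acts on $V$ as the scalar $\lambda_{i,j,n}:=\frac{1}{n}\qnum{nB_{ij}}(\zeta^{2n}-\zeta^{-2n})/(q-q^{-1})$. On the finite-dimensional space $V_\varrho$, however, it is a commutator of endomorphisms and hence has trace zero, yielding $(\dim V_\varrho)\lambda_{i,j,n}=0$. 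Setting $i=j$ (and picking any nonzero $V_\varrho$) and using that $\qnum{nB_{ii}}/n\neq 0$ as $q$ is transcendental, one gets $\zeta^{2n}=\zeta^{-2n}$ for every $n\neq 0$; equivalently $\zeta^4=1$. Consequently $c^n=c^{-n}$ on $V$ for all $n$, so the generators $(h_{i,r})_{i\in I,r\in\Z_{\neq 0}}$ commute pairwise on $V$, and $V$ is an $\ell$-weight module.

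For the twisting, I will construct a family of automorphisms $\sigma_\eta$ of $\uqgh$, indexed by $\eta\in\Cx$ with $\eta^4=1$, defined on Drinfeld generators by
\[
\sigma_\eta(k_i)=k_i,\qquad \sigma_\eta(x^\pm_{j,m})=\eta^{\pm m}x^\pm_{j,m},\qquad \sigma_\eta(h_{i,n})=\eta^{n-|n|}h_{i,n},\qquad \sigma_\eta(c^{1/2})=\eta c^{1/2}.
\]
Verifying that these formulas respect each Drinfeld relation is a direct computation. The one genuine constraint is imposed by the relation $[h_{i,n},x^\pm_{j,m}]=\pm\frac{1}{n}\qnum{nB_{ij}}c^{\mp|n|/2}x^\pm_{j,n+m}$: the image of the left-hand side rescales by $\eta^{n-|n|}\eta^{\pm m}$, the image of the right by $\eta^{\mp|n|}\eta^{\pm(n+m)}$, and equality of the two for both signs of $n$ forces precisely $\eta^2=\eta^{-2}$, i.e.\ $\eta^4=1$. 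Taking $\eta=\zeta^{-1}$ (well-defined by Step 2), the twisted module $V^{\sigma_{\zeta^{-1}}}$ has $c^{1/2}$ acting as the identity, and $V$ is recovered from it as its pullback by $\sigma_\zeta$.

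I anticipate the main obstacle to be the bookkeeping of the last paragraph---checking each Drinfeld relation for $\sigma_\eta$, in particular its compatibility with the series $\phi_i^\pm(u)$. The substantive input is the trace argument of Step 2, where the finite-dimensionality of weight spaces built into $\hat\cat$ is converted into the sharp algebraic constraint $c^2=1$.
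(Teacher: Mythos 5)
Your proposal is correct and follows essentially the same route as the paper: the paper also deduces $c^2=1$ from the relation $[h_{i,n},h_{j,-n}]\propto c^n-c^{-n}$ acting on a finite-dimensional weight space (it cites the fact that the Weyl algebra has no finite-dimensional representations, of which your trace computation is precisely the standard proof), and then twists by an explicit automorphism rescaling $x^\pm_{j,m}$, $h_{i,n}$ and $c^{1/2}$. The only cosmetic difference is that you package the twist as a single family $\sigma_\eta$ indexed by $\eta^4=1$, where the paper writes down two separate automorphisms for the cases $\tau^2=-1$ and $\tau=-1$.
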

\begin{proof}
Since the invertible central element $c^{1/2}$ acts as a multiple of the identity on any simple module, there exists a $\tau\in \C$ such that $c^{1/2}\on v=\tau v$ for all $v\in V$. Then each weight space $V_{\varrho}$ carries a representation of the 3-dimensional Lie algebra generated by $h_{i,r}$, $h_{j,s}$ and $(c-c^{-1})$. By Definition \ref{catdef} part (\ref{fw}), $V_{\varrho}$ is finite-dimensional. The Weyl algebra $\C[x,p]/\left<xp-px-1\right>$ does not admit finite-dimensional representations. Therefore  $\tau^2-\tau^{-2}=0$. Hence $c-c^{-1}$ acts as zero on $V$. This proves the first part. If $\tau^2=-1$ then the map 
\be c^{1/2}\mapsto \tau^{-1} c^{1/2}, \quad k_i\mapsto k_i,\quad h_{i,r} \mapsto \tau^{-|r|} h_{i,r} , \quad x^\pm_{j,s} \mapsto (\mp 1)^{s} x_{j,s}^\pm\nn\ee
defines an automorphism of $\uqgh$; twisting by it we indeed arrive at a module on which $c^{1/2}$ acts as the identity. On the other hand if $\tau=-1$, we may twist by the automorphism of $\uqgh$ defined by
\be c^{1/2}\mapsto - c^{1/2}, \quad k_i\mapsto k_i,\quad h_{i,r} \mapsto h_{i,r} , \quad x^\pm_{j,s} \mapsto (\mp 1)^{s} x_{j,s}^\pm,\nn\ee
with the same result.
\end{proof}
\begin{defn}\label{hatcatdef}
We say a $\uqgh$-module is in category $\hat\cat$ if its restriction as a $\uqg$-module is in category $\cat$ and $c^{1/2}$ acts as the identity on $V$. 
\end{defn}

Definitions \ref{catdef} and \ref{hatcatdef} were stated in \cite{HernandezFusion}. 

The category $\cat$ is a subcategory of the abelian monoidal category of all $\uqg$-modules.
It is clear that $\cat$ is closed under taking quotients, submodules and finite direct sums, and tensor products. Therefore $\cat$ is an abelian monoidal category. 

Likewise, $\hat\cat$ is an abelian monoidal subcategory of the category of all $\uqgh$-modules.
Every $V\in \Ob\hat\cat$ is an $\ell$-weight module.

\begin{rem}
Because we wish $\cat$ to be closed under tensor products, we do not require that every object $V$ of $\cat$ be finitely generated as a $\uqg$-module.
Similarly, inside our category $\hat\cat$ there is a subcategory consisting of those objects that are finitely generated \emph{as $\uqgh$-modules}. This subcategory contains all simple objects of $\hat\cat$ (these are classified in Theorem \ref{thm1} below) and is strictly smaller than $\hat\cat$. It is an interesting question whether this subcategory is closed under taking tensor products.
\end{rem}

\subsection{Classification of simple objects}
Given $V\in\Ob\hat\cat$, the decomposition (\ref{gwts}) into weight spaces can be refined as follows. An \emph{$\ell$-weight} is any $N$-tuple of sequences of complex numbers 
\be\bs\gamma \equiv (\gamma_{i,\pm r}^\pm)_{i\in I, r\in \Z_{\geq 0}},\nn\ee
such that $\gamma_{i,0}^+\gamma_{i,0}^- = 1$ for every $i\in I$. 
Given an $\ell$-weight $\bs\gamma$ we define its weight to be 
\be \wt(\bs\gamma) := (\gamma_{i,0}^+)_{i\in I} \in \overline{\h^*}.\label{wtdef}\nn\ee
Then for every weight $\varrho$ of $V$ we have, c.f. (\ref{phidef}),
\be V_\varrho = \bigoplus_{\bs\gamma: \wt(\bs\gamma)=\varrho} V_{\bs\gamma}\,,\qquad V_{\bs\gamma} = \{ v \in V : \exists k \in \mathbb Z_{> 0}, \,\, \forall i \in I, r\geq 0 \quad \left(
  \phi_{i,\pm r}^\pm - \gamma_{i,\pm r}^\pm\right)^k \on v = 0 \} \,, \label{lwdecomp} \nn\ee 
where the sum is over all $\ell$-weights of weight $\varrho$. 
We call $V_{\bs\gamma}$ the \emph{$\ell$-weight space} of $\ell$-weight $\bs\gamma$. 
We say $\bs\gamma$ is an \emph{$\ell$-weight of $V$} if $\dim (V_{\bs\gamma}) >0$. If $v\in V_{\bs\gamma}$ is nonzero and moreover $\phi_{i,\pm r}^\pm . v = \gamma_{i,\pm r}^\pm v$ for all $i\in I$, $r\in \Z_{\geq 0}$, then $v$ is called an \emph{$\ell$-weight vector} of $\ell$-weight $\bs\gamma$. Every $\ell$-weight space contains an $\ell$-weight vector. If $v\in V$ is nonzero and $x_{i,r}^+.v=0$ for all $i\in I$, $r\in \Z$, then we say the vector $v$ is \emph{singular}.

We say $V\in\Ob\hat\cat$ is a \emph{highest $\ell$-weight representation} of \emph{highest $\ell$-weight} $\bs\gamma$ if $V=\uqgh.v$ for some singular $\ell$-weight vector $v\in V_{\bs \gamma}$. By (\ref{uqghtriang})  $\dim(V_{\bs\gamma})=1$, so $v$ is unique up to scale; we call it the \emph{highest $\ell$-weight vector} of $V$.

\begin{defn}\label{rdef}
We say an $\ell$-weight $\bs f = (f_{i,\pm r}^\pm)_{i\in I, r\in \Z_{\geq 0}}$ is \emph{rational} if  there is an $N$-tuple of complex-valued rational functions $(f_i(u))_{i\in I}$ of a formal variable $u$ such that, for each $i\in I$, $f_i(u)$ is regular at $0$ and $\8$,  $f_i(0)f_i(\8)=1$, and
\be \sum_{r=0}^\8 f^+_{i,r} u^r = f_i(u) = \sum_{r=0}^\8 f^-_{i,-r} u^{-r} \label{rform}\nn\ee
in the sense that the left- and right-hand sides are the Laurent expansions of $f_i(u)$ about $0$ and $\8$, respectively. 

Let $\mc R$ be the set of rational $\ell$-weights. $\mc R$ forms an abelian group, the group operation  $(\bs f, \bs g)\mapsto \bs f \bs g$ being given by component-wise multiplication of the corresponding tuples of rational functions. 
\end{defn}
In what follows, we do not always distinguish between a rational $\ell$-weight $\bs f$ and the corresponding tuple $(f_i(u))_{i\in I}$ of rational functions.
Note that in terms of the latter, we have \be\nn\wt(\bs f) =  (f_i(0))_{i\in I}.\ee 

For every weight $\varrho$, let $V(\varrho)$ be the irreducible $\uqg$-module with highest weight $\varrho$. Recall that $V(\varrho)$ is unique up to isomorphism and is finite-dimensional if and only if $\varrho\in \overline{P^+}$; see  \cite{CPbook}, chapter 10. 

For every rational $\ell$-weight $\bs f$, let us write $\L(\bs f)$ for the irreducible $\uqgh$-module with highest $\ell$-weight $\bs f$. By definition, $\L(\bs f)$ is unique up to isomorphism. Moreover $\L(\bs f)$ and $\L(\bs f')$ are not isomorphic unless $\bs f = \bs f'$. 
Every highest $\ell$-weight $\uqgh$-module with highest $\ell$-weight $\bs f\in \mc R$ has $\L(\bs f)$ as a quotient. 

Recall \cite{CP94} that $\L(\bs f)$ is finite-dimensional if and only for each $i\in I$ the rational function $f_i(u)$ is of the form $f_i(u) = q_i^{\deg P_i} P_i(u q_i^{-2}) / P_i(u)$ for some polynomial $P_i(u)$ with constant coefficient 1, called a Drinfeld polynomial. Observe that this is a stronger condition than $\wt(\bs f) \in \overline{P^+}$.

We can now state the following theorem, which classifies the simple objects in $\hat\cat$.

\begin{thm}\label{thm1}
The map $\bs f\mapsto \L(\bs f)$ defines a bijection between $\mc R$ and the isomorphism classes of simple objects in $\hat\cat$. 
\end{thm}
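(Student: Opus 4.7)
My plan is to prove the theorem in two directions: (A) every simple object of $\hat\cat$ is a highest $\ell$-weight module whose highest $\ell$-weight is rational, and (B) for every $\bs f \in \mc R$ the simple module $\L(\bs f)$ belongs to $\hat\cat$; injectivity of $\bs f \mapsto \L(\bs f)$ is tautological from the notation. To set up (A), let $V$ be simple in $\hat\cat$. Because condition (\ref{fgen}) of Definition \ref{catdef} confines the weights of $V$ to a finite union of cones $\varrho_j(\overline{Q^+})^{-1}$, no infinite ascending chain can occur in the partial order \eqref{pop}, so $V$ has a maximal weight $\varrho$. The finite-dimensional space $V_\varrho$ is annihilated by every $x^+_{i,r}$, since otherwise its image would occupy the empty space $V_{\bar\alpha_i \varrho}$. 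By Proposition \ref{c1prop} the operators $\phi^\pm_{i,s}$ commute on $V$, so they share a joint eigenvector $v_0 \in V_\varrho$; $v_0$ is singular and, by simplicity, cyclic. Thus $V = \L(\bs\gamma)$ for some $\ell$-weight $\bs\gamma$, with $\gamma^+_{i,0}\gamma^-_{i,0} = 1$ automatic from $\phi^\pm_{i,0} = k_i^{\pm 1}$.

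The crux of (A) is rationality. Fix $i \in I$ and consider the $\uqslp{i}$-submodule generated by $v_0$. Its $\bar\alpha_i^{-1}\varrho$-weight subspace is finite-dimensional but spanned by $\{x^-_{i,r} v_0\}_{r\in\Z}$, so there exist finitely-supported scalars $(c_r)$, not all zero, with $\sum_r c_r x^-_{i,r} v_0 = 0$. Applying any $x^+_{i,s}$ and using $[x^+_{i,s},x^-_{i,r}] = (\phi^+_{i,s+r} - \phi^-_{i,s+r})/(q_i - q_i^{-1})$ together with $x^+_{i,s} v_0 = 0$ yields
\[
\sum_r c_r \bigl(\gamma^+_{i,s+r} - \gamma^-_{i,s+r}\bigr) = 0 \qquad \text{for every } s \in \Z.
\]
Packaging this into the generating series $c(u) := \sum_r c_r u^r$, $\gamma^+_i(u) := \sum_{n\geq 0}\gamma^+_{i,n} u^n$, $\gamma^-_i(u) := \sum_{n\geq 0}\gamma^-_{i,-n} u^{-n}$, and exploiting that $\gamma^+_{i,k}=0$ for $k<0$ and $\gamma^-_{i,k}=0$ for $k>0$, a direct coefficient comparison shows that $c(u^{-1})\gamma^+_i(u)$ and $c(u^{-1})\gamma^-_i(u)$ are the \emph{same} Laurent polynomial $R(u)\in\C[u,u^{-1}]$. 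Hence $\gamma^+_i$ and $\gamma^-_i$ are the Laurent expansions at $0$ and $\infty$ of the rational function $f_i(u) := R(u)/c(u^{-1})$, so $\bs\gamma \in \mc R$.

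For (B), given $\bs f \in \mc R$, form the universal highest $\ell$-weight module $M(\bs f) = \uqgh \otimes_{\hat U^0\hat U^+} \C_{\bs f}$ from the triangular decomposition \eqref{uqghtriang}; its unique simple quotient is $\L(\bs f)$. Since $M(\bs f)$ itself is typically \emph{not} in $\hat\cat$ (its weight spaces are infinite-dimensional), $\L(\bs f)$ must be realized concretely. For $\g=\mf{sl}_2$ this is carried out in Section 4 by exhibiting $\L(\bs f)$ as a subquotient of a tensor product of evaluation Verma modules, whose weight spaces are visibly finite-dimensional; for general $\g$ one factorizes $\bs f = \bs f_1 \cdots \bs f_n$ into rational $\ell$-weights each supported at a single node of $I$ and realizes $\L(\bs f)$ as a subquotient of $\bigotimes_k \L(\bs f_k)$, which lies in $\hat\cat$ by closure under tensor products. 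The main obstacle is the rationality step in (A): the reduction to $\uqslp{i}$ is routine, but the content of the theorem lies in showing that the finite-dimensional weight-space hypothesis forces the two generating series $\gamma^\pm_i(u)$ to be Laurent expansions of the \emph{same} rational function, which requires the bookkeeping above and in particular the verification that the two polynomials $c(u^{-1})\gamma^\pm_i(u)$ coincide at every intermediate power of $u$.
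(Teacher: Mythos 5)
Your part (A) is essentially the paper's argument: the cone condition produces a maximal weight and hence a singular $\ell$-weight vector, and rationality of the highest $\ell$-weight is extracted from a finite linear relation $\sum_r c_r x^-_{i,r}\on v_0=0$ by applying $x^+_{i,s}$ and packaging the resulting identities into generating series, exactly as in the paper's proof (the paper also records the normalization $f_i(0)f_i(\infty)=1$ from $b_N b_1/(a_N a_1)=1$, which you wave at but do not lose).

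The gap is in part (B), the claim that $\L(\bs f)\in\Ob\hat\cat$ for every $\bs f\in\mc R$. Your reduction to rational $\ell$-weights supported at a single node does not reduce the difficulty: $\L(\bs f_k)$ is still a $\uqgh$-module, and you must bound \emph{all} of its weight spaces $\L(\bs f_k)_{\wt(\bs f_k)\overline{\alpha}^{-1}}$, $\alpha\in Q^+$, not only the one in the $\alpha_{i_k}$-direction; restricting to the copy of $\uqslth$ at node $i_k$ says nothing about the other simple directions, and no argument is offered for them. Moreover, the appeal to \S\ref{sl2sec} for $\g=\mf{sl}_2$ is circular in the paper's logical order, since the description of irreducibles there rests on the classification and the $q$-character machinery that are built on Theorem \ref{thm1}. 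The paper's converse avoids any concrete realization: working directly in the abstract irreducible quotient $\L(\bs f)$, rationality of $f_i$ gives $x^+_{i,s}\sum_{j=1}^N a_j x^-_{i,M+j}\on v=0$ for all $M$ and $s$; since an irreducible highest $\ell$-weight module contains no singular vectors outside $\C v$, the vector $\sum_{j=1}^N a_j x^-_{i,M+j}\on v$ itself vanishes, so $V_{\wt(\bs f)\overline{\alpha_i}^{-1}}$ is spanned by finitely many vectors; the remaining weight spaces are then finite-dimensional by induction on $\hgt(\alpha)$ as in \cite{CP94}. This ``no other singular vectors'' step, together with the height induction, is the missing idea; without it (or a genuine construction of the single-node modules) your part (B) does not close.
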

\begin{proof}
Suppose $V\in\Ob\hat\cat$ is irreducible. Then it follows from part (\ref{fgen}) of Definition \ref{catdef} that $V$ contains a singular $\ell$-weight vector, say $v$. Since $V$ is irreducible, $V=\uqgh.v$, so $V$ is a highest $\ell$-weight representation. 
Thus it is enough to show that a highest $\ell$-weight irreducible representation $V$ is in $\hat\cat$ if and only if its highest $\ell$-weight $\bs f$ is rational.


We shall first show that for each $i\in I$, $\dim(V_{\wt(\bs f)\overline{\alpha_i}^{-1}})<\8$ if and only if $\bs f$ is rational.
By (\ref{uqghtriang}), $V_{\wt(\bs f)\overline{\alpha_i}^{-1}}$ is spanned by the vectors $x_{i,r}^-.v$, $r\in \Z$. 

Suppose $\dim(V_{\wt(\bs f)\overline{\alpha_i}^{-1}})<\8$. Then there exists $N>0$ and $a_1,a_2,\dots,a_N\in \C$, $a_1\neq 0 \neq a_N$, such that $\sum_{j=1}^N a_j x^-_{i,j} \on v = 0$. For all $s\in \Z$,
\be 0= (q_i-q_i^{-1}) x^+_{i,s}\sum_{j=1}^N a_j x^-_{i,j} \on v 
     = \sum_{j=1}^N a_j( f^+_{i,j+s} - f^-_{i,j+s}).  \label{singv}\ee
Here it is understood that $f^+_{-n} =f^-_{n}=0$ identically for all $n>0$.
So $0 = \sum_{j=1}^N a_j f^+_{i,j+s}$ for all $s\geq 0$. Letting $f_i^+(z) := \sum_{n=0}^\8 z^n f^+_n$, we have that 
\begin{align} f_i^+(z) \sum_{j=1}^N a_j z^{N-j} &= \sum_{j=1}^N \sum_{s=-j}^\8 z^{s+N} a_j f^+_{i,j+s} = \sum_{j=1}^N  \sum_{s=-j}^{-1}z^{s+N} a_j f^+_{i,j+s} \nn\\
&= \sum_{j=1}^N b_j z^{N-j}, \qquad\text{where}\qquad b_j = \sum_{\ell=1}^N a_\ell f^+_{i,\ell-j}. \nn\end{align}
Similarly, $0 = \sum_{j=1}^N a_j f^-_{i,j+s}$ for all $s<-N$. Letting $f_i^-(z) := \sum_{n=0}^\8 z^{-n} f_{-n}^-$, we have
\be f^-_i(z) \sum_{j=1}^N a_j z^{N-j} = \sum_{j=1}^N b'_jz^{N-j}, \qquad\text{where}\qquad b'_j = \sum_{\ell=1}^N a_\ell f^-_{i,\ell-j}. \nn\ee
The remaining equations of (\ref{singv}) are then $b_{-s}=b'_{-s}$ for $-N\leq s<0$.
Thus $f^+_i(z)$ and $f^-_i(z)$ are the Laurent expansions, about 0 and $\8$ respectively, of the rational function  
\be  \frac{ b_N + z b_{N-1} + z^2 b_{N-2} + \dots + z^{N-1} b_1}
                    { a_N + z a_{N-1} + z^2 a_{N-2} + \dots + z^{N-1} a_1}.\nn\ee  
Finally, the constraint that $f_{i,0}^+ f_{i,0}^-=1$ yields
\be \frac{b_N}{a_N} \frac{b_1}{a_1} = 1.\nn\ee
Hence $f_i^+(z)$ and $f_i^-(z)$ are indeed of the required form.

Conversely, suppose $f^+_i(z)$ and $f^-_i(z)$ are as above for some $N>0$. Then a similar calculation shows that $x^+_{i,s}\sum_{j=1}^N a_j x^-_{i,M+j} \on v=0$ for all $M\in \Z$. Since $V$ has no singular vectors which are not scalar multiples of $v$, it follows that $\sum_{j=1}^N a_j x^-_{i,M+j} \on v=0$, i.e. that for all $M\in \Z$, the vectors $\{x^-_{i,M+j}\on v:j=1,2,\dots,N\}$ are linearly related. By applying this result finitely many times, any given vector $x^-_{i,r}\on v$ can be expressed as a linear combination of, say, the vectors $\{x^-_{i,j}\on v:j=1,2,\dots,N\}$. That is, $V_{\wt(\bs f)\overline{\alpha_i}^{-1}}$ is finite-dimensional. 

To complete the proof, we note that if $V_{\wt(\bs f)\overline{\alpha_i}^{-1}}$ is finite-dimensional for every $i\in I$ then every remaining weight space $V_{\wt(\bs f)\overline{\alpha}^{-1}}$, $\alpha\in Q^+$,  of $V$ is finite-dimensional too. This follows by an induction on $\hgt(\alpha)$ exactly as in \cite{CP94}, \S5, proof of case (b). 
\end{proof}
The ``only if'' part of the theorem was proved in \cite{HernandezFusion}, Lemma 14.

\subsection{$q$-Characters}
Recalling the definition of the group $\mc R$ of rational $\ell$-weights, Definition \ref{rdef}, let us define a subgroup $\mc Q\subset \mc R$, the group of \emph{$l$-roots}, as follows.
For each $j\in I$ and $a\in \Cx$, define $\A_{j,a} \in \mc R$ by 
\be (\A_{j,a})_{i}(u) = q^{B_{ji}} \frac{1-q^{-B_{ji}} au}{1-q^{B_{ji}}au}\nn\ee
for each $i\in I$. Note that $\wt(\A_{j,a}) = \overline\al_j$. 
We call each $\A_{j,a}$ a \emph{simple $l$-root}.
The reader should be warned that in \cite{FR,FM} what we call $\A_{j,a}$ was instead labelled $\A_{j,aq_j}$.

Let $\Q$ be the subgroup of $\mc R$ generated by $\A_{i,a}$, $i\in I,a\in\Cx$. 
Note that $\mc Q$ is a free group, i.e. the $\A_{j,a}$ are algebraically independent. 
Let $\Q^\pm$ be the monoid generated by $\A_{i,a}^{\pm 1}$, $i\in I,a\in\Cx$. We call the latter the positive/negative $l$-roots.

There is a partial order $\leq$ on $\mc R$ in which $\bs f\leq \bs g$ if and only if $\bs g \bs f^{-1} \in \Q^+$. It is compatible with the partial order (\ref{pop}) on $\overline{\h^*}$ in the sense that $\bs f\leq \bs g$ implies $\wt \bs f\leq \wt \bs g$.

\begin{defn}
The \emph{$q$-character} of $V\in\Ob\hat\cat$ is the formal sum of its $\ell$-weights, counted with multiplicities:
\be \chi_q(V) := \sum_{\bs f\in \mc R} \dim(V_{\bs f}) \bs f \in \Z[\mc R].\nn\ee
\end{defn}
One also has the usual $\uqg$-character map
\be \chi(V) := \sum_{\lambda \in \h^*} \dim(V_{\overline\lambda}) \overline\lambda \in \Z\left[\overline{\h^*}\right]\nn\ee
for any $V\in\Ob\cat$.
It is clear that $\chi(V) = (\wt\circ\chi_q)(V)$ for all $V\in \Ob\hat\cat$. 
\begin{prop}\label{stepprop}
Suppose $\bs f$ and $\bs g$ are $\ell$-weights of $V\in \Ob \hat \cat$, and $i\in I$. Then 
\be \nn V_{\bs g} \cap \bigoplus_{r\in \Z} x_{i,r}^\pm(V_{\bs f}) \neq \{0\} \Longrightarrow 
\bs g = \bs f \bs A^{\pm 1}_{i,a} \text{ for some } a\in \Cx.\ee 
\end{prop}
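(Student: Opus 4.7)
The plan is to combine a generating-function computation with the finite-dimensionality of a single weight space. First, starting from the Drinfeld commutator (\ref{hxpm}) (with $c=1$) and the Cartan-type formula $\phi_j^\pm(u) = k_j^{\pm 1}\exp\!\bigl(\pm(q-q^{-1})\sum_{m\geq 1}h_{j,\pm m}u^{\pm m}\bigr)$, a direct exponential calculation (using $\sum_{m\geq 1}\tfrac{q^{mB_{ij}}-q^{-mB_{ij}}}{m}(uz)^m = \log\tfrac{1-q^{-B_{ij}}uz}{1-q^{B_{ij}}uz}$) yields the formal-series identities
\[
\phi_j^{\pm}(u)\,X_i^+(z) \;=\; (\A_{i,z})_j(u)\,X_i^+(z)\,\phi_j^{\pm}(u),\qquad
\phi_j^{\pm}(u)\,X_i^-(z) \;=\; (\A_{i,z})_j(u)^{-1}\,X_i^-(z)\,\phi_j^{\pm}(u),
\]
where $X_i^{\pm}(z):=\sum_{r\in\Z}x_{i,r}^{\pm}z^{-r}$ and $(\A_{i,z})_j(u)=q^{B_{ij}}(1-q^{-B_{ij}}uz)/(1-q^{B_{ij}}uz)$ is expanded in nonnegative (resp.\ nonpositive) powers of $u$ when paired with $\phi_j^+$ (resp.\ $\phi_j^-$).

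Next, it suffices to treat the $+$ case, since the $-$ case reduces to it via the involution $\hat\cinv$ of (\ref{cinvhatdef})--(\ref{cinvhatphi}). Put $W:=\bigoplus_{r\in\Z}x_{i,r}^+(V_{\bs f})\subseteq V_{\wt(\bs f)\overline{\al_i}}$. By Definition~\ref{catdef}(i) the ambient weight space is finite-dimensional, and the identities above show $W$ is stable under every $\phi_{j,s}^{\pm}$. Since these operators commute pairwise and $V_{\bs g}\cap W$ is a nonzero invariant subspace on which $\phi_{j,s}^\pm - g_{j,s}^\pm$ acts nilpotently, one can pick a nonzero true common eigenvector $w \in V_{\bs g}\cap W$ satisfying $\phi_{j,s}^\pm w = g_{j,s}^\pm w$. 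Simultaneously upper-triangularizing all $\phi_{j,s}^\pm$ on $V_{\bs f}$ so that a true common eigenvector $e$ sits at the top of the resulting filtration, and then descending on the nilpotent filtration, one reduces to the situation where $w$ admits the form $w=\sum_r c_r\,x_{i,r}^+\,e$ with $c_r\in\C$ and $\phi_j^\pm(u)\,e = f_j^\pm(u)\,e$.

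With this reduction, applying the formal identity from the first paragraph to $w$ and setting $w_n:=\sum_r c_r\,x_{i,r+n}^+\,e$ gives
\[
g_j^+(u)\,w \;=\; f_j^+(u)\Bigl[q^{B_{ij}}w+(q^{B_{ij}}-q^{-B_{ij}})\sum_{n\geq 1}(q^{B_{ij}}u)^n w_n\Bigr].
\]
Extracting the coefficient of $u^s$ for each $s\geq 1$ forces $w_s=\mu_s w$ for some $\mu_s\in\C$; consistency across $j$ with $B_{ij}\neq 0$ makes $\mu_s$ independent of $j$. Running the argument with $w_t$ in place of $w$ (with its naturally shifted decomposition) yields $w_{t+s}=\mu_s w_t=\mu_s\mu_t w$, whence $\mu_s=a^s$ for $a:=\mu_1$. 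The parallel analysis using $\phi_j^-(u)$ produces $w_{-s}=b^s w$ and identifies $g_j^-(u)/f_j^-(u)$ as the $u=\infty$ expansion of a rational function of the same shape; demanding that $g_j^+(u)/f_j^+(u)$ and $g_j^-(u)/f_j^-(u)$ be the two Laurent expansions of a single rational function forces $ab=1$ (in particular $a\in\Cx$) and $g_j(u)/f_j(u)=(\A_{i,a})_j(u)$ for every $j\in I$, i.e., $\bs g=\bs f\,\A_{i,a}$.

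The main technical obstacle is the upper-triangular/descent reduction in the second paragraph: without it, the factor $\phi_j^+(u)e - f_j^+(u)e$ acquires a nonzero nilpotent correction and the identity displayed in the third paragraph holds only modulo the nilpotent filtration on $V_{\bs f}$; descending to a true common eigenvector $e$ makes the identity exact and allows the scalar relation $w_s=\mu_s w$ to be read off directly.
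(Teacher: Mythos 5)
Your overall strategy is the same as the paper's: derive from (\ref{hxpm}) the generating-series relation $\phi_j^\pm(u)\,X_i^+(z)=(\A_{i,z})_j(u)\,X_i^+(z)\,\phi_j^\pm(u)$ and show that the resulting functional equation has a nonzero solution only when $g_j/f_j=(\A_{i,a})_j$ for a single $a\in\Cx$; your third paragraph is essentially the paper's equation (\ref{Xeqn}) read off coefficient by coefficient. The genuine gap is the reduction you lean on in the second paragraph. You cannot in general arrange $w=\sum_r c_r\,x_{i,r}^+\,e$ with $e$ a \emph{true} common eigenvector in $V_{\bs f}$ and $w$ a \emph{true} common eigenvector in $V_{\bs g}$: for instance, if the subspace of true common eigenvectors of $V_{\bs f}$ (which is a proper subspace whenever the nilpotent parts are nonzero) happens to be annihilated by all $x_{i,r}^+$, then no such $e$ exists, even though $V_{\bs g}\cap\sum_r x_{i,r}^+(V_{\bs f})$ can be nonzero through generalized eigenvectors. ``Descending on the nilpotent filtration'' does not produce the vector you need, because that filtration on $V_{\bs f}$ interacts in no controlled way with $\bigcap_r\ker(x_{i,r}^+)$ or with the $\ell$-weight projection onto $V_{\bs g}$.

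The paper avoids this by never asking for true eigenvectors on either side: it takes an upper-triangular basis $(v_k)$ of $V_{\bs f}$ and a \emph{lower}-triangular basis $(w_\ell)$ of $V_{\bs g}$, and extracts the corner matrix element $\lambda_{K,L}(z)$, where $K$ is minimal with $(x_i^+(z)\on v_K)_{\bs g}\neq 0$ and $L$ is minimal with $\lambda_{K,L}\neq 0$. The nilpotent corrections on both sides then drop out by minimality (they involve only $\lambda_{k',\ell}$ with $k'<K$ and $\lambda_{K,\ell'}$ with $\ell'<L$), and one lands on exactly the scalar equation you want --- for a matrix element rather than for an eigenvector. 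If you replace your second paragraph by this two-sided corner extraction, the rest of your computation goes through. Two smaller points: (a) your multiplicativity step $\mu_{s+t}=\mu_s\mu_t$ silently assumes $w_t\neq 0$, and you must separately exclude the degenerate solution $g_j^\pm=q^{\pm B_{ij}}f_j^\pm$ (your $\mu_s\equiv 0$, formally ``$a=0$''); in the paper this is automatic because $\lambda_{K,L}(z)$ is a doubly infinite Laurent series, so the recurrences $b_n\lambda_m+c_n\lambda_{m-1}=0$ force $b_n$ and $c_n$ to vanish simultaneously or not at all, giving $a\in\Cx$. (b) The reduction of the $-$ case to the $+$ case via $\hat\cinv$ should be phrased with care, since the pull-back of $V$ by $\hat\cinv$ need not lie in $\hat\cat$ (the weight cone is reversed); the honest statement is that the same computation runs with $\phi^\mp$ and $x^-$ in place of $\phi^\pm$ and $x^+$.
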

\begin{proof}
Let $(v_k)_{1\leq k\leq \dim V_{\bs f}}$ be a basis of $V_{\bs f}$ in which the action of the $\phi_{i,r}^\pm$ is upper-triangular, in the sense that for all $i\in I$ and $1\leq k\leq \dim V_{\bs f}$, 
\be(\phi_i^\pm(u)- f_{i}^\pm(u))\on v_k = \sum_{k'<k} v_{k'} \xi^{\pm,k,k'}_i(u),\label{uppertriang}\ee
for certain formal series $\xi^{\pm,k,k'}_i(u) \in u\C[[u]]$. (The leading order is $u^1$: recall that $\phi_{i,0}^\pm$ act diagonally.) Let $(w_k)_{1\leq k\leq \dim V_{\bs g}}$ be a basis of $V_{\bs g}$ in which the action of the $\phi_{i,r}^\pm$ is lower-triangular, in the sense that for all $i\in I$ and $1\leq k\leq \dim V_{\bs g}$, 
\be\nn(\phi_i^\pm(u)-g_i^\pm(u))\on w_{\ell} = \sum_{\ell'>\ell} w_{\ell'} \zeta_{i}^{\pm,\ell,\ell'}(u),\label{lowertriang}\ee
for certain formal series $\zeta^{\pm,\ell,\ell'}_i(u) \in u\C[[u]]$.
Consider for definiteness the case of $x^+_{j}(z)$ ($x^-_j(z)$ is similar).
For all $1\leq k\leq \dim(V_{\bs f})$, 
\be (x_{i}^+(z) \on v_k)_{\bs g} = \sum_{\ell=1}^{\dim(V_{\bs g})} \lambda_{k,\ell}(z) w_\ell \nn\ee
for some formal series $\lambda_{k,\ell}(z)\in \C[[z]]$ for each $\ell$, $1\leq \ell \leq \dim(V_{\bs g})$.
It follows from the defining relations (\ref{hxpm}) that
\be (q^{B_{ij}}-uz) x^+_i(z) \left(\phi^+_j(u) - f^+_j(u) \right) \on v_k = \left( (1-q^{B_{ij}}uz)\phi^+_j(u) -  (q^{B_{ij}}-uz)  f^+_j(u) \right) x^+_i(z)\on v_k\nn\ee
where $x^+_i(z) := \sum_{r\in \Z} z^{-r} x_{i,r}^+$. On resolving this equation in the basis of $V_{\bs g}$ above and taking the $w_{\ell}$ component, we have
\begin{align}(q^{B_{ij}}-uz)\sum_{k'=1}^{k-1} \xi^{+,k,k'}_j(u) \lambda_{k',\ell}(z) &= \left( (1-q^{B_{ij}}uz)g^+_j(u) -  (q^{B_{ij}}-uz) f^+_j(u) \right) \lambda_{k,\ell}(z) \nn\\
&\phantom = + (1-q^{B_{ij}}uz) \sum_{\ell'=1}^{\ell-1} \lambda_{k,\ell'}(z) \zeta_j^{+,\ell',\ell}(u) 
\label{XAeqn}.\end{align}
Suppose $V_{\bs g} \cap (x_{i}^+(z)(V_{\bs f})) \neq \{0\}$. Then there is a smallest $K$ such that $(x_{i}^+(z) \on v_K)_{\bs g} \neq 0$ and then a smallest $L$ such that $\lambda_{K,L}(z)\neq 0$. So (\ref{XAeqn}) gives, in particular,
\be 0= \left( (1-q^{B_{ij}}uz)g^+_j(u) -  (q^{B_{ij}}-uz)f^+_j(u) \right) \lambda_{K,L}(z) 
\label{Xeqn}.\ee
This must hold for all $j\in I$. For each $j\in I$, (\ref{Xeqn}) is an equation of the form $0=\lambda_k(v) \sum_{n=0}^\8 u^n (b^{(i)}_n + c^{(i)}_nv)$  for the formal Laurent series $\lambda_{K,L}(v)$,  with $b^{(i)}_n,c^{(i)}_n\in \C$ for all $n\in\Z_{\geq 0}$. Equivalently, for each $i\in I$, it is a countably infinite set of first order recurrence relations on the series coefficients of $\lambda_{K,L}(v)$. There are non-zero solutions if and only if there is an $a\in\Cx$ such that $b^{(i)}_n/c^{(i)}_n=-a$ for all $n\in \Z_{\geq 0}$ and all $j\in I$. That is,
\be g^+_j(u) \left(f^+_j(u)\right)^{-1} = q^{B_{ij}} \frac{1-q^{-B_{ij}} u a}{1-q^{B_{ij}}u a} \nn\ee
as an equality of power series in $u$. Similar arguments hold for $\phi_i^-(u)$.  
\end{proof}
This proposition has a number of important corollaries. First,
\begin{cor}\label{taucor}
Suppose $\bs f$ and $\bs g$ are $\ell$-weights of $V\in \Ob \hat \cat$, and $v\in V_{\bs f}$ and $w\in V_{\bs g}$ are nonzero. 
If $i\in I$ and $a\in \Cx$ are such that $w\in\Span_{r\in \Z} x_{i,r}^{\pm} \on v$ and  
\be (\bs g)_i = (\bs f \bs A_{i,a}^{\pm 1})_i,\qquad\text{i.e.}\qquad g_i(u) = f_i(u) q_i^2 \frac{1-q_i^{-2}au}{1-q_i^2au}, \nn\ee
then $\bs g = \bs f \bs A_{i,a}^{\pm 1}$.    \qed
\end{cor}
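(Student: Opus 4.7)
The plan is to invoke Proposition \ref{stepprop} directly and then pin down the scalar $a$ by reading off a single coordinate of the resulting identity of rational $\ell$-weights.

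Since $0 \neq w \in V_{\bs g}$ and $w \in \Span_{r\in\Z} x_{i,r}^{\pm} \on v$ with $v \in V_{\bs f}$, the intersection $V_{\bs g} \cap \bigoplus_{r\in\Z} x_{i,r}^{\pm}(V_{\bs f})$ is nonzero, so Proposition \ref{stepprop} supplies some $b\in\Cx$ with $\bs g = \bs f\,\A_{i,b}^{\pm 1}$ in $\mc R$. Projecting onto the $i$-th coordinate, and using that $B_{ii}=2r_i$ gives $(\A_{i,b})_i(u) = q_i^{2}\frac{1-q_i^{-2}bu}{1-q_i^{2}bu}$, one reads off $g_i(u) = f_i(u)\bigl((\A_{i,b})_i(u)\bigr)^{\pm 1}$. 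The hypothesis of the corollary is exactly this same identity with $a$ in place of $b$, so $(\A_{i,a})_i(u) = (\A_{i,b})_i(u)$ as rational functions; since this function has a unique pole (at $u = q_i^{-2}b^{-1}$) which determines the spectral parameter unambiguously, one concludes $a=b$ and hence $\bs g = \bs f\,\A_{i,a}^{\pm 1}$.

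The corollary is a one-step consequence of the preceding proposition, so I do not anticipate any serious obstacle. The only real content is the observation that the $i$-th entry of a single simple $l$-root $\A_{i,b}$ already determines $b$, which is immediate from the explicit rational form of $(\A_{i,b})_i(u)$.
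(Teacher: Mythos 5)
Your proof is correct and matches the paper's intent exactly: the corollary is stated with no written proof precisely because it is the immediate consequence of Proposition \ref{stepprop} that you describe, with the spectral parameter pinned down by the $i$-th component. The only detail worth noting is that the pole of $(\A_{i,b})_i(u)$ at $u=q_i^{-2}b^{-1}$ genuinely exists (no cancellation with the zero at $u=q_i^{2}b^{-1}$) because $q$ is transcendental, which you implicitly use and which is fine.
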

As is the case for finite-dimensional $\uqgh$-modules, the $q$-characters of the simple objects in  $\hat\cat$ have the following ``cone'' property.
\begin{cor}\label{coneprop}
For all $\bs f\in \mc R$,  $\chi_q(\L(\bs f)) \in \bs f \Z[\A_{i,a}^{-1}]_{i \in I, a\in \Cx}$. In particular, all $\ell$-weights of $\L(\bs f)$ are rational.
\end{cor}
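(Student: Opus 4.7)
The plan is to proceed by induction on $\hgt(\alpha)$, where $\wt(\bs g) = \wt(\bs f)\overline{\alpha}^{-1}$ with $\alpha \in Q^+$, and to show that every $\ell$-weight $\bs g$ of $\L(\bs f)$ has the form $\bs g = \bs f \prod_{j=1}^{\hgt(\alpha)} \A_{i_j, a_j}^{-1}$ for some $i_j \in I$ and $a_j \in \Cx$. Both assertions of the corollary follow at once from this claim, since $\bs f \in \mc R$ and each $\A_{i,a}$ lies in $\mc R$.

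First I would establish that $\L(\bs f) = \hat U^-.v$, where $v$ is the highest $\ell$-weight vector: this is immediate from the triangular decomposition \eqref{uqghtriang} together with $\hat U^+.v = \C v$ (by singularity of $v$) and $\hat U^0.v = \C v$ (since $v$ is an $\ell$-weight vector). Consequently the $\uqg$-weight space $V_{\wt(\bs f)\overline{\alpha}^{-1}}$ is spanned by monomials $x^-_{i_1, r_1} \cdots x^-_{i_k, r_k}.v$ with $\alpha_{i_1} + \cdots + \alpha_{i_k} = \alpha$, each of which can be viewed as $x^-_{i_1, r_1}$ applied to a vector of $V_{\wt(\bs f)\overline{\alpha - \alpha_{i_1}}^{-1}}$. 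Decomposing this smaller weight space into its $\ell$-weight components yields
\beu
V_{\wt(\bs f)\overline{\alpha}^{-1}} \;=\; \sum_{i \in I}\sum_{\bs g'} \bigoplus_{r \in \Z} x^-_{i,r}\bigl(V_{\bs g'}\bigr),
\eeu
the inner sum ranging over $\ell$-weights $\bs g'$ of $\L(\bs f)$ with $\wt(\bs g') = \wt(\bs f)\overline{\alpha - \alpha_i}^{-1}$.

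The base case $\hgt(\alpha) = 0$ is trivial, forcing $\bs g = \bs f$. For the inductive step with $\hgt(\alpha) \geq 1$, I would fix any nonzero $w \in V_{\bs g}$ and use the above decomposition to argue that for at least one pair $(i, \bs g')$, the intersection $V_{\bs g} \cap \bigoplus_r x^-_{i,r}(V_{\bs g'})$ is nonzero. Proposition \ref{stepprop} then produces some $a \in \Cx$ with $\bs g = \bs g' \A_{i,a}^{-1}$, and since $\hgt(\alpha - \alpha_i) = \hgt(\alpha) - 1$, applying the inductive hypothesis to $\bs g'$ supplies the required monomial expression for $\bs g$.

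The one subtle point I expect is the extraction, in the inductive step, of a genuine predecessor $\bs g'$ from the nonzero $w$. The summands $\bigoplus_r x^-_{i,r}(V_{\bs g'})$ need not lie in single $\ell$-weight spaces, so decomposing $w$ according to the identity above only directly furnishes a pair $(i, \bs g')$ for which the $V_{\bs g}$-projection of that summand is nonzero, rather than the stronger intersection condition. This is nonetheless the hypothesis that is effectively used inside the proof of Proposition \ref{stepprop} — via the nonvanishing of a $V_{\bs g}$-component $(x^-_i(z).v_K)_{\bs g}$ — so the same conclusion $\bs g = \bs g' \A_{i,a}^{-1}$ still applies. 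Modulo this bookkeeping observation, the induction is entirely routine.
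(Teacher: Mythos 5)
Your argument is correct and is precisely the intended unpacking of the paper's one-line proof, which cites the same three ingredients (the triangular decomposition (\ref{uqghtriang}), Proposition \ref{stepprop}, and Theorem \ref{thm1}) and runs the same induction on $\hgt(\alpha)$ down the cone of weights below $\wt(\bs f)$. Your closing observation --- that one only directly obtains nonvanishing of the $V_{\bs g}$-projection of $\bigoplus_r x^-_{i,r}(V_{\bs g'})$ rather than a nonzero intersection, but that this weaker condition is all the proof of Proposition \ref{stepprop} actually uses --- is accurate and tidies up a point the paper leaves implicit.
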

\begin{proof}
Given (\ref{uqghtriang}) and Proposition \ref{stepprop}, this follows from Theorem \ref{thm1}.
\end{proof}

Let $\groth{\hat\cat}$ be the Grothendieck ring of $\hat\cat$. For all $V\in \Ob\hat\cat$, the class $[V]\in \groth{\hat\cat}$ is a $\Z$-linear combination of the classes $[\L(\bs f)]\in \groth{\hat\cat}$, $\bs f\in \mc R$, of the irreducibles in $\hat\cat$. 
\begin{thm}\label{chiqhom}
$\chi_q$ defines an injective ring homomorphism  $\groth{\hat\cat}\to \Z[\mc R]$.
\end{thm}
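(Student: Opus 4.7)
The plan is to verify three properties of $\chi_q$: additivity on short exact sequences (so $\chi_q$ descends to $\groth{\hat\cat}$), multiplicativity $\chi_q(V \otimes W) = \chi_q(V)\chi_q(W)$, and injectivity. Additivity and the $\ell$-weight decomposition are straightforward: on $\hat\cat$, $c=1$, so the Drinfeld relation $[h_{i,n},h_{j,m}] \propto \delta_{n,-m}(c^n-c^{-n})$ shows the $h_{i,r}$, and hence the $\phi_i^\pm(u)$, commute. Therefore $\ell$-weight spaces are joint generalized eigenspaces of this commuting family, and such eigenspaces are preserved by subobjects and quotients, giving $\dim V_{\bs f} = \dim V'_{\bs f} + \dim V''_{\bs f}$ in any short exact sequence $0\to V'\to V\to V''\to 0$.

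For multiplicativity, given $V, W \in \Ob\hat\cat$, note first that $V\otimes W\in\Ob\hat\cat$ because the conditions of Definition \ref{catdef} are stable under tensor product and $c=1$ on $V\otimes W$. Filter $V \otimes W$ by the $\uqg$-weight of the first tensor factor: $F_\varrho := \bigoplus_{\varrho' \leq \varrho} V_{\varrho'} \otimes W$. Formula (\ref{damiani}) writes $\Delta \phi_i^\pm(u) = \phi_i^\pm(u) \otimes \phi_i^\pm(u)$ plus a correction in $\hat U^- \otimes \hat U^+$ that strictly decreases the first-factor $\uqg$-weight (since $\hat U^-$ is spanned by weight-lowering elements). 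Hence each $F_\varrho$ is stable under $\Delta \phi_i^\pm(u)$, and on the associated graded $F_\varrho / F_{<\varrho} \cong V_\varrho \otimes W$ the operator $\Delta\phi_i^\pm(u)$ acts as $\phi_i^\pm(u) \otimes \phi_i^\pm(u)$. The joint generalized $\ell$-weight space of eigenvalue $\bs h$ on this graded piece is $\bigoplus_{\bs f\bs g=\bs h,\ \wt\bs f = \varrho} V_{\bs f} \otimes W_{\bs g}$, and since dimensions of generalized eigenspaces are preserved under taking associated graded,
\[\dim(V \otimes W)_{\bs h} = \sum_{\bs f \bs g = \bs h} (\dim V_{\bs f})(\dim W_{\bs g}),\]
which yields $\chi_q(V \otimes W) = \chi_q(V)\chi_q(W)$.

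For injectivity, take $x = \sum_i n_i [\L(\bs f_i)] \in \ker \chi_q$ with the $\bs f_i$ distinct. By Corollary \ref{coneprop}, each $\chi_q(\L(\bs f_i))$ is supported on $\bs f_i \cdot \Q^-$ with coefficient one on $\bs f_i$ itself. Pick $\bs f_i$ maximal in the partial order on $\mc R$ among $\{\bs f_j : n_j \neq 0\}$; such a maximum exists because the $\uqg$-weights of $x$ lie in finitely many cones and each $\uqg$-weight multiplicity is finite, so only finitely many $\bs f_j$ with $n_j\ne 0$ can dominate any given one. The coefficient of $\bs f_i$ in $\chi_q(x)=0$ is then exactly $n_i$, forcing $n_i=0$, a contradiction.

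The main obstacle is the multiplicativity step: one must verify carefully that (\ref{damiani}) combined with the filtration by first-factor $\uqg$-weight really does produce the claimed $\ell$-weight decomposition on $V\otimes W$ — in particular, that the associated-graded computation of the joint generalized eigenspaces of $\Delta\phi_i^\pm(u)$ matches up with the naive tensor product of $\ell$-weight spaces. The injectivity step is a standard ``leading term'' argument given Corollary \ref{coneprop}, but requires attention to the existence of maximal elements if infinite formal $\Z$-combinations are permitted in $\groth{\hat\cat}$.
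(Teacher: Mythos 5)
Your proposal is correct and follows essentially the same route as the paper: additivity is immediate, multiplicativity rests on Damiani's coproduct formula (\ref{damiani}) together with a triangularity argument (your filtration by the first-factor $\uqg$-weight is just a repackaging of the paper's choice of upper-triangular bases of the $\ell$-weight spaces), and injectivity is the leading-term argument via the cone property of Corollary \ref{coneprop}. Your treatment of injectivity is in fact slightly more careful than the paper's, which only asserts linear independence of the $\chi_q(\L(\bs f))$ without addressing the existence of maximal $\ell$-weights.
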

\begin{proof}
It is clear that $\chi_q(W) = \chi_q(U) + \chi_q(V)$ whenever $U,V,W\in \Ob\hat\cat$ are such that $[W] = [U] + [V]$ in $\groth{\hat\cat}$, i.e. whenever there is a short exact sequence $0\to V\to W \to U \to 0$ of $\uqgh$-modules. 

To show that $\chi_q(V\otimes W) = \chi_q(V)\chi_q(W)$ we argue as in \cite{FR}. For each $\ell$-weight $\bs f$ of $V$, let $(v_{\bs f,k})_{1\leq k\leq \dim V_{\bs f}}$ be a basis of $V_{\bs f}$ in which the action of the $\phi_{i,r}^\pm$ is upper-triangular, c.f. (\ref{uppertriang}); and likewise for each $\ell$-weight $\bs g$ of $W$ let $(w_{\bs g,k})_{1\leq k\leq \dim W_{\bs g}}$ be an upper-triangular basis of $W_{\bs g}$. Then it follows from (\ref{damiani}) that $(v_{\bs f,k}\otimes w_{\bs g,\ell})$ is a basis of $(V\otimes W)_{\bs f \bs g}$ in which the action of the $\phi_{i,r}^\pm$ is upper-triangular. Thus, $\ell$-weights are multiplicative across tensor products, and their multiplicities are additive, as required. 

The classes $[\L(\bs f)]\in \groth{\hat\cat}$, $\bs f\in \mc R$, of the irreducible representations are linearly independent, because their images under $\chi_q$ are linearly independent. Injectivity of $\chi_q$ follows from this.
\end{proof}

\begin{cor}\label{allratprop}
All $\ell$-weights of representations in $\hat\cat$ are rational.\qed
\end{cor}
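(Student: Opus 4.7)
The plan is to reduce the statement to Corollary \ref{coneprop}, which already settles rationality for the simple objects of $\hat\cat$. The bridge between the general and simple cases is the Jordan--H\"older-type decomposition $[V]=\sum_{\bs f\in\mc R} n_{\bs f}[\L(\bs f)]$ in $\groth{\hat\cat}$ recorded just before Theorem \ref{chiqhom}, combined with the additivity of $\chi_q$ over short exact sequences (the first step of the proof of Theorem \ref{chiqhom}).

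Concretely, I would fix $V\in\Ob\hat\cat$ together with an $\ell$-weight $\bs\gamma$ for which $\dim V_{\bs\gamma}>0$, apply $\chi_q$ to the decomposition above, and read off the identity of non-negative integers
\beu \dim V_{\bs\gamma}=\sum_{\bs f\in\mc R} n_{\bs f}\,\dim\L(\bs f)_{\bs\gamma}. \eeu
Positivity of the left-hand side forces some term on the right to be nonzero, so $\bs\gamma$ occurs as an $\ell$-weight of some simple module $\L(\bs f)$ with $\bs f\in\mc R$. Corollary \ref{coneprop} then gives $\bs\gamma\in\bs f\,\Q^{-}\subset\mc R$, and rationality of $\bs\gamma$ is immediate.

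The step that requires the most care is justifying the weight-by-weight identity above, since an object of $\hat\cat$ may have infinite length and so the decomposition of $[V]$ must be interpreted in a locally finite sense. The two defining conditions of $\hat\cat$ -- finite-dimensional weight spaces, and weights confined to a finite union of cones of the form $\{\varrho_j x^{-1}:x\in\overline{Q^+}\}$ -- guarantee that only finitely many simple factors $\L(\bs f)$ can contribute to any prescribed weight space of $V$, so the displayed sum is an honest finite identity of ordinary integers. Once this bookkeeping is in place the corollary follows at once from the two ingredients cited above.
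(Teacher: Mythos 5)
Your argument is correct and is exactly the route the paper intends: the corollary is placed immediately after Theorem \ref{chiqhom} precisely so that it follows from the decomposition of $[V]$ into classes of simples, the additivity/multiplicativity of $\chi_q$ established there, and the cone property of Corollary \ref{coneprop}. Your closing remark on the local finiteness of the decomposition (guaranteed by the finite-dimensional weight spaces and the cone condition in Definition \ref{catdef}) is a worthwhile precision that the paper leaves implicit.
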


We also need the following proposition.

\begin{prop} \label{irrbothprop}
Suppose $\L(\bs f)\in\Ob\hat\cat$ and $\L(\bs g)\in\Ob\hat\cat$ are such that $\L(\bs f)\otimes \L(\bs g)$ is irreducible. Then $\L(\bs f) \otimes \L(\bs g) \cong \L( \bs f  \bs g) \cong \L(\bs g) \otimes \L(\bs f)$ as $\uqgh$-modules. 
\end{prop}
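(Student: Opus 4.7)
The plan is to find a singular $\ell$-weight vector in each tensor product by a maximal-weight argument, apply Theorem \ref{thm1} to the irreducible tensor product, and then use the ring-homomorphism property of $\chi_q$ (Theorem \ref{chiqhom}) to transfer the result to the opposite ordering.

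Let $v_f \in \L(\bs f)$ and $v_g \in \L(\bs g)$ be the highest $\ell$-weight vectors. In $\L(\bs f) \otimes \L(\bs g)$, the weight space of maximal weight $\wt(\bs f)\wt(\bs g)$ is one-dimensional and spanned by $v_f \otimes v_g$. This vector is automatically singular, since $x_{i,r}^+(v_f \otimes v_g)$ would lie in a strictly higher weight space, which is trivial. To identify its $\ell$-weight I would apply (\ref{damiani}): the correction to $\phi_i^\pm(u) \otimes \phi_i^\pm(u)$ lies in $\hat U^- \otimes \hat U^+$ with the right tensor factor of strictly positive weight, hence annihilating $v_g$. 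Therefore $\Delta(\phi_i^\pm(u))(v_f \otimes v_g) = f_i^\pm(u)g_i^\pm(u)\,v_f \otimes v_g$, so the $\ell$-weight of $v_f \otimes v_g$ is $\bs f \bs g$. Since $\L(\bs f) \otimes \L(\bs g)$ is irreducible by hypothesis and contains this singular $\ell$-weight vector, it is a highest $\ell$-weight module of $\ell$-weight $\bs f \bs g$; by Theorem \ref{thm1} it is therefore isomorphic to $\L(\bs f \bs g)$.

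For the opposite ordering the same maximal-weight argument produces a singular $\ell$-weight vector $v_g \otimes v_f \in \L(\bs g) \otimes \L(\bs f)$ of $\ell$-weight $\bs g \bs f = \bs f \bs g$ ($\mc R$ being abelian). The cyclic submodule $M := \uqgh \on (v_g \otimes v_f)$ is then a highest $\ell$-weight module and admits $\L(\bs f \bs g)$ as a quotient. By Theorem \ref{chiqhom}, $\chi_q$ is a ring homomorphism into the commutative ring $\Z[\mc R]$, so
\[
\chi_q(\L(\bs g) \otimes \L(\bs f)) = \chi_q(\L(\bs g))\chi_q(\L(\bs f)) = \chi_q(\L(\bs f))\chi_q(\L(\bs g)) = \chi_q(\L(\bs f) \otimes \L(\bs g)) = \chi_q(\L(\bs f \bs g)),
\]
the last equality being the first isomorphism. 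In the chain $\L(\bs f \bs g) \twoheadleftarrow M \hookrightarrow \L(\bs g) \otimes \L(\bs f)$ the outer terms have equal dimensions in every $\ell$-weight space, which forces $\dim M_{\bs h} = \dim (\L(\bs g) \otimes \L(\bs f))_{\bs h} = \dim \L(\bs f \bs g)_{\bs h}$ for all $\bs h$; both the inclusion and the surjection are therefore isomorphisms in each $\ell$-weight space, so $\L(\bs g) \otimes \L(\bs f) = M \cong \L(\bs f \bs g)$.

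The one real technicality is the observation that a vector of maximal weight in a $\hat\cat$-module is automatically singular; this lets us sidestep any explicit coproduct formula on the Drinfeld currents $x_{i,r}^+$ and reduces the whole argument to (\ref{damiani}), Theorem \ref{thm1}, and the multiplicativity of $\chi_q$ in the commutative ring $\Z[\mc R]$.
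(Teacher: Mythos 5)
Your proof is correct and follows essentially the same route as the paper: identify $v\otimes w$ as a singular $\ell$-weight vector of $\ell$-weight $\bs f\bs g$ via (\ref{damiani}), invoke Theorem \ref{thm1}, and then handle the reversed ordering by comparing characters. The only differences are cosmetic — you spell out the singularity via the maximal-weight argument (which the paper delegates to the finite-dimensional case in \cite{CPbook}) and make the subquotient-plus-equal-characters step explicit with the sandwich $\L(\bs f\bs g)\twoheadleftarrow M\hookrightarrow \L(\bs g)\otimes\L(\bs f)$.
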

\begin{proof}
Let $v\in \L(\bs f)$ and $w\in \L(\bs g)$ be highest $\ell$-weight vectors. 
Since $\L(\bs f)\otimes \L(\bs g)$ is irreducible, to show that it is isomorphic to $\L(\bs{fg})$ it is, by Theorem \ref{thm1}, enough to show that $v\otimes w$ is a singular $\ell$-weight vector in $\L(\bs f) \otimes \L(\bs g)$ and has $\ell$-weight $\bs f  \bs g$. 
That $v\otimes w$ has $\ell$-weight $\bs f \bs g$ follows from (\ref{damiani}). 
That $v\otimes w$ is singular follows exactly as in the case of finite-dimensional modules, c.f. \cite{CPbook}.
Finally, $\L(\bs g) \otimes \L(\bs f)$ contains $\L(\bs f\bs g)=\L(\bs f)\otimes \L(\bs g)$ as a subquotient. But $\chi(\L(\bs f)\otimes\L(\bs g))=\chi(\L(\bs g)\otimes \L(\bs f))$. Hence $\L(\bs g) \otimes \L(\bs f)\cong \L(\bs f \bs g)$. 
\end{proof}
\subsection{Analytic continuation}\label{acsec}
In this subsection we observe that if the rational highest $\ell$-weight $\bs f$ depends rationally on an additional parameter $x\in \C$ then the normalized $\uqg$-character of $\L(\bs f)$,
\be \widetilde\chi(\L(\bs f)) := \overline{\lambda}^{-1}\chi(\L(\bs f)),\quad \text{where  $\overline\lambda= \wt(\bs f)$},\nn\ee 
is the same for almost all $x$. 
In fact, for each positive integer $n$, $\widetilde\chi(\L(\bs f))$ modulo weights $\mu$ such that $\hgt(\lambda-\mu)>n$,  is the same for all but \emph{finitely many} $x$.

We use the following standard lemma from linear algebra. 
\begin{lem}\label{dimdroplem}
Let $V,W$ be complex vector spaces, with $\dim W<\8$, and let  $A_i(u): V\to W$, $i\in \N$, be a countable set of linear operators rationally depending on a complex parameter $x$. Let $d_{A(x)} = \codim_V\left(\bigcap_{i\in \N} \ker A_i(x)\right) $. 
Assume that for all $x\in \C$, $d_{A(x)}<\8$. 

Then there exists a finite set $S\subset \C$ such that for all $x_1,x_2\in \C\setminus S$ and $x_3\in S$ we have $d_{A(x_1)} = d_{A(x_2)} \geq d_{A(x_3)}$.\qed
\end{lem}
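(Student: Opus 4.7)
The approach combines rational upper-semicontinuity of rank applied to finite subcollections of the $A_i$ with a countability argument that upgrades pointwise finiteness of $d_{A(x)}$ to a uniform bound.

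For any finite subset $F \subset \N$, let $r_F(x)$ denote the rank of the joint map $\bigoplus_{i \in F} A_i(x) : V \to W^{|F|}$. This rank is bounded by $|F|\cdot \dim W$ and by $d_{A(x)}$, so in particular $r_F(x) < \infty$. Choosing any bases, the condition $r_F(x) \geq r$ is equivalent to non-vanishing of one of the $r \times r$ minors of the matrix representing $\bigoplus_{i\in F}A_i(x)$, each of which is a rational function of $x$. Hence $\{x : r_F(x) \geq r\}$ is either empty or cofinite in $\C$. It follows that $r_F^{\max} := \sup_x r_F(x) = \max_x r_F(x)$ is finite and attained on the complement of a finite subset $S_F \subset \C$, with $r_F(x) < r_F^{\max}$ for $x \in S_F$.

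The first key step is to establish that $D := \sup_{x \in \C} d_{A(x)}$ is finite. If not, then for every $N$ there is some $x$ with $d_{A(x)} > N$ and hence some finite $F$ with $r_F(x) > N$; so I may choose an ascending chain $F_1 \subset F_2 \subset \cdots$ of finite subsets with $r_{F_n}^{\max} \to \infty$. The union $\bigcup_{n\in\N} S_{F_n}$ is then a countable subset of $\C$, so there exists $x \in \C \setminus \bigcup_n S_{F_n}$. For such $x$, $d_{A(x)} \geq r_{F_n}(x) = r_{F_n}^{\max}$ for every $n$, forcing $d_{A(x)} = \infty$, which contradicts the hypothesis.

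Once $D < \infty$ is in hand, the conclusion is essentially a one-line application of upper-semicontinuity: pick any $x^*$ with $d_{A(x^*)} = D$ and any finite $F^*$ with $r_{F^*}(x^*) = D$. Since $r_F(x) \leq d_{A(x)} \leq D$ for every $F$ and every $x$, we have $r_{F^*}^{\max} = D$, and therefore $r_{F^*}(x) = D$ for all $x \in \C \setminus S_{F^*}$; combined with $d_{A(x)} \leq D$, this gives $d_{A(x)} = D$ for $x \notin S_{F^*}$. Hence the set $S := \{x : d_{A(x)} < D\}$ is contained in $S_{F^*}$ and thus finite, proving the lemma. The main obstacle is the second step — isolating the uniform bound $D$ — which is the only place where the pointwise-finiteness hypothesis is genuinely used and where the uncountability of $\C$ enters.
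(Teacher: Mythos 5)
Your proof is correct. Note that the paper offers no proof of this lemma at all --- it is introduced as a ``standard lemma from linear algebra'' and the statement is followed immediately by the end-of-proof symbol --- so there is nothing of the authors' to compare against; your argument is a complete substantiation of what they take for granted. The structure is sound: generic upper-semicontinuity of $r_F$ for each finite $F$ (via minors, which is where $\dim W<\8$ enters through the bound $r_F(x)\leq |F|\cdot\dim W$), then the countable-versus-uncountable argument to extract the uniform bound $D=\sup_x d_{A(x)}<\8$, then a single well-chosen $F^*$ to conclude. The one point you use twice without justification --- that $d_{A(x)}=\sup_F r_F(x)$ over finite $F\subset\N$, with the supremum attained whenever it is finite (you need this both for ``hence some finite $F$ with $r_F(x)>N$'' and for the existence of $F^*$) --- deserves a sentence: the subspaces $K_n=\bigcap_{i\leq n}\ker A_i(x)$ form a decreasing chain with nondecreasing codimensions $r_{\{1,\dots,n\}}(x)$; if these are bounded they stabilize at some $n_0$, and since a subspace of $K_{n_0}$ having the same finite codimension in $V$ must equal $K_{n_0}$, the full intersection is $K_{n_0}$ and $d_{A(x)}=r_{\{1,\dots,n_0\}}(x)$. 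With that supplied, every step goes through.
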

\begin{prop}\label{anacprop}
Let $f_i(u,x)$, $i\in I$, be rational functions of $u$ and $x$ such that for each $x\in \C$, $f_i(u,x)$ defines a rational $\ell$-weight $\bs f(x)$. Then for all $\alpha\in \Q^+$ there exists a finite set $S\subset \C$ such that for all $x_1,x_2\in \C\setminus S$ and $x_3\in S$ we have
\be\nn \dim\left(\L\left(\bs f\left(x_1\right)\right)\right)_{\wt(\bs f)\overline\alpha^{-1}} = \dim\left(\L\left(\bs f\left(x_2\right)\right)\right)_{\wt(\bs f)\overline\alpha^{-1}}
\geq \dim\left(\L\left(\bs f\left(x_3\right)\right)\right)_{\wt(\bs f)\overline\alpha^{-1}}.\ee
\end{prop}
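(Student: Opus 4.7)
The plan is to reduce the proposition to Lemma \ref{dimdroplem} by realizing the family of weight spaces $\L(\bs f(x))_{\wt(\bs f)\overline\alpha^{-1}}$ as quotients of a single $x$-independent vector space by the joint kernel of a countable family of rationally varying linear functionals.

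Fix the countable-dimensional complex vector space $V := \hat U^-_{-\alpha}$, which is independent of $x$ and has a PBW basis of ordered monomials in $(x_{i,r}^-)_{i\in I, r\in \Z}$ of total weight $-\alpha$. For each $x$ let $M(\bs f(x))$ be the Verma module with highest $\ell$-weight vector $v_{\bs f(x)}$, and let $N(\bs f(x))$ be its maximal proper $\uqgh$-submodule, so that $\L(\bs f(x)) = M(\bs f(x))/N(\bs f(x))$. By (\ref{uqghtriang}) the map $u\mapsto u v_{\bs f(x)}$ is a linear isomorphism $V \xrightarrow{\sim} M(\bs f(x))_{\wt(\bs f)\overline\alpha^{-1}}$. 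Let $N_x \subseteq V$ denote the preimage of $N(\bs f(x))_{\wt(\bs f)\overline\alpha^{-1}}$; then $\codim_V N_x = \dim \L(\bs f(x))_{\wt(\bs f)\overline\alpha^{-1}}$, and this codimension is finite by Theorem \ref{thm1}.

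Next I would describe $N_x$ as an intersection of kernels of linear functionals. Fix once and for all a PBW basis $\{Z_i\}_{i\in \N}$ (independent of $x$) of the weight-$\alpha$ subspace of $\uqgh$, and define $A_i(x):V\to \C$ by letting $A_i(x)(u)$ be the coefficient of $v_{\bs f(x)}$ in $Z_i u\, v_{\bs f(x)} \in M(\bs f(x))_{\wt(\bs f)} = \C v_{\bs f(x)}$. Since $M(\bs f(x))_{\wt(\bs f)}$ is one-dimensional and its nonzero elements generate the whole Verma module, $u v_{\bs f(x)} \in N(\bs f(x))$ iff the weight-$\wt(\bs f)$ part of $\uqgh \cdot u v_{\bs f(x)}$ vanishes; and this part is spanned precisely by the vectors $Z_i u\, v_{\bs f(x)}$, so the condition is equivalent to $A_i(x)(u) = 0$ for all $i$. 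Hence $N_x = \bigcap_{i\in \N}\ker A_i(x)$.

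The main step is to verify that each $A_i(x)$ depends rationally on $x$. Expand $Z_i u\in \uqgh$ via (\ref{uqghtriang}) as a finite $x$-independent PBW sum $\sum_k a_k b_k c_k$ with $a_k \in \hat U^-, b_k \in \hat U^0, c_k \in \hat U^+$. In $Z_i u\, v_{\bs f(x)}$, by weight considerations only summands with $a_k,c_k\in \C$ can contribute to the coefficient of $v_{\bs f(x)}$, so this coefficient is a $\C$-linear combination, with $x$-independent coefficients, of eigenvalues of elements of $\hat U^0$ on $v_{\bs f(x)}$. The eigenvalue of $k_j$ is $f_j(0,x)$, which is rational in $x$; and by (\ref{phidef}) the eigenvalue of $h_{j,\pm m}$ is, up to a factor of $\pm(q-q^{-1})^{-1}$, the coefficient of $u^{\pm m}$ in the Laurent expansion about $0$ (resp.\ $\infty$) of $\log\!\bigl(f_j(0,x)^{\mp 1}f_j(u,x)\bigr)$. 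Since $f_j(u,x)$ is rational in $u$ and $x$, each such Laurent coefficient is polynomial in the coefficients of $f_j(u,x)$ and in $f_j(0,x)^{-1}$, hence rational in $x$. With $V$, $W=\C$ and the maps $\{A_i(x)\}_{i\in \N}$, Lemma \ref{dimdroplem} then yields the required finite exceptional set $S\subset \C$, since $d_{A(x)} = \codim_V N_x = \dim \L(\bs f(x))_{\wt(\bs f)\overline\alpha^{-1}}<\infty$ for every $x$. The one technical subtlety is the rationality step: one must confirm that taking $\log$, Laurent-expanding, and evaluating against $\bs f(x)$ all preserve rational dependence on $x$, which follows from the fact that Laurent coefficients of $\log g(u)$ at a point where $g$ is analytic and nonzero are polynomial in the coefficients of $g$ and in the inverse of $g$'s constant term.
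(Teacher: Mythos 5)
Your proposal is correct, and it rests on the same key ingredient as the paper, Lemma \ref{dimdroplem}; but the way you reduce to that lemma is genuinely different. The paper's proof is a one-line induction on $\hgt(\alpha)$: at each step the weight space $\L(\bs f(x))_{\wt(\bs f)\overline\alpha^{-1}}$ is obtained by applying the rationally varying operators $x^-_{i,r}$ to the inductively controlled lower-height weight spaces, and the lemma governs how the rank of such a countable family can jump. You instead make a single, non-inductive application of the lemma with $W=\C$, by realizing the weight space as the quotient of the fixed, $x$-independent space $\hat U^-_{-\alpha}$ by the joint kernel of the functionals $u\mapsto(\text{coefficient of }v_{\bs f(x)}\text{ in }Z_i u\, v_{\bs f(x)})$ --- in effect, by the radical of a Shapovalov-type pairing. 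Your identification of that kernel with the image of the maximal proper submodule is correct (a weight vector below the top generates a proper submodule iff no weight-$\alpha$ element of $\uqgh$ returns it to the highest weight line), and the finiteness hypothesis of the lemma is correctly supplied by Theorem \ref{thm1}. What each route buys: yours sidesteps the mild awkwardness in the inductive version that the target spaces of the operators themselves vary with $x$, at the price of having to check that the pairing entries are rational in $x$; you discharge that by the PBW/triangular decomposition (\ref{uqghtriang}) together with the observation that the $h_{j,\pm m}$-eigenvalues are Laurent coefficients of $\log\bigl(f_j(0,x)^{\mp1}f_j(u,x)\bigr)$ and hence rational in $x$, which is the right argument. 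Both versions share the (unaddressed, also in the paper) convention that ``rational dependence on $x$'' in Lemma \ref{dimdroplem} is to be read so that the finitely many parameter values where the defining rational expressions degenerate are absorbed into the exceptional set.
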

\begin{proof}
By induction on $\hgt(\alpha)$, making use of Lemma \ref{dimdroplem}. 
\end{proof}

\subsection{Dual modules}\label{dualsec}
Given $V\in \Ob\cat$ we shall write $V^*$ for the restricted left dual of $V$. That is, $V^*$ is the space of linear maps $\lambda: V\to \C$ with finite support on a weighted basis of $V$, equipped with the left $\uqg$-action given by $(x\on \lambda)(v) = \lambda(S(x)\on v)$. 
It is clear that $V^*$ is a weight module whose weight spaces are all finite-dimensional.

If $V\in \Ob\cat$ is highest weight then $V^*$ is lowest weight. 

If $V\in \Ob\hat\cat$ then $V^*$ is also a $\uqgh$-module; moreover if $V$ is highest $\ell$-weight then $V^*$ is lowest $\ell$-weight, in the obvious sense. 

Let $R(\bs g)$ denote the irreducible lowest $\ell$-weight $\uqgh$-module with lowest $\ell$-weight $\bs g$. 
\begin{prop}
For all $\bs f \in \mc R$, $\L(\bs f)^*\cong R(\bs f^{-1})$ as $\uqgh$-modules.
\end{prop}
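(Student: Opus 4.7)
The plan is to verify that the functional $v_0^*$ dual to a highest $\ell$-weight vector $v_0\in \L(\bs f)_{\bs f}$ is a lowest $\ell$-weight vector of $\L(\bs f)^*$ with lowest $\ell$-weight $\bs f^{-1}$, and then to establish irreducibility of $\L(\bs f)^*$. Fix a basis of $\L(\bs f)$ consisting of $\ell$-weight vectors containing $v_0$, and let $v_0^*$ be the functional dual to $v_0$ in that basis.

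A short check on Drinfeld--Jimbo generators shows the antipode $S$ preserves the $Q$-grading on $\uqgh$. Hence $S(x_{i,r}^-)w$ has weight $\wt(w)\,\overline{\alpha_i}^{-1}<\wt(\bs f)$ for every $w\in \L(\bs f)$, so $v_0^*(S(x_{i,r}^-)w)=0$, i.e.\ $x_{i,r}^-\cdot v_0^*=0$ for all $i\in I$, $r\in\Z$. For the $\phi_i^\pm(u)$-action, the same weight-preservation reduces $(\phi_i^\pm(u)\cdot v_0^*)(w)=v_0^*(S(\phi_i^\pm(u))w)$ to the case $w\in\C v_0$. Applying $m(S\otimes \id)$ to (\ref{damiani}) and using the antipode axiom gives
\begin{equation*}
S(\phi_i^\pm(u))\,\phi_i^\pm(u)+\sum_j S(X_j)\,Y_j=1,
\end{equation*}
with each $Y_j\in\hat U^+$ of strictly positive weight (the weight-zero part of the correction vanishes by applying $\eps\otimes \id$ to (\ref{damiani})); since $v_0$ is singular, $Y_j v_0=0$. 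Inverting $f_i^\pm(u)$ as a formal series (permissible since $f_{i,0}^\pm\neq 0$) then yields $S(\phi_i^\pm(u))v_0=f_i^\pm(u)^{-1}v_0$, so $\phi_i^\pm(u)\cdot v_0^*=f_i^\pm(u)^{-1}v_0^*$ and $v_0^*$ carries $\ell$-weight $\bs f^{-1}$.

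To conclude I would show $\L(\bs f)^*$ is simple by the standard restricted-dual argument. Any nonzero $\uqgh$-submodule $W\subset \L(\bs f)^*$ has an annihilator $W^\perp\subset \L(\bs f)$ which, via the antipode definition of the contragredient action, is a $\uqgh$-submodule of $\L(\bs f)$; simplicity of $\L(\bs f)$ forces $W^\perp\in\{0,\L(\bs f)\}$, and $W^\perp=\L(\bs f)$ would give $W=0$. So $W^\perp=0$, and since each weight space $\L(\bs f)_\varrho$ is finite-dimensional, a weight-by-weight application of finite-dimensional duality upgrades this to $W=\L(\bs f)^*$. Hence $\L(\bs f)^*$ is an irreducible lowest $\ell$-weight $\uqgh$-module of lowest $\ell$-weight $\bs f^{-1}$, which is $R(\bs f^{-1})$ by definition. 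The only delicate step is the identification $S(\phi_i^\pm(u))v_0 = f_i^\pm(u)^{-1}v_0$, which works cleanly precisely because (\ref{damiani}) confines the correction terms to $\hat U^-\otimes \hat U^+$ so that the right tensor factor annihilates $v_0$.
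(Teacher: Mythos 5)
Your proof is correct, and it is packaged differently from the paper's. The paper takes the irreducibility of $\L(\bs f)^*$ for granted, writes $\L(\bs f)^*\cong R(\bs g)$ for some $\bs g$, and identifies $\bs g$ by feeding the lowest $\ell$-weight vector of the dual and the highest $\ell$-weight vector of $\L(\bs f)$ into the evaluation pairing $\L(\bs f)^*\otimes\L(\bs f)\to\C$, which is a module map to the trivial module; the relation $g^\pm_i(u)f^\pm_i(u)=1$ then drops out of (\ref{damiani}). Your identity $S(\phi_i^\pm(u))\on v_0=f_i^\pm(u)^{-1}v_0$, obtained from $m(S\otimes\id)\Delta=\eps$ applied to (\ref{damiani}), is that same computation unwound at the level of the single vector $v_0$, so the mechanism for pinning down the lowest $\ell$-weight is essentially shared. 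What you add is (a) the direct check that the dual functional $v_0^*$ is a lowest $\ell$-weight vector (killed by all $x_{i,r}^-$ on weight grounds, and a $\phi$-eigenvector), and (b) the annihilator argument for irreducibility of the restricted dual, neither of which the paper spells out; this makes your version more self-contained at the cost of length. Two small points you implicitly rely on are both fine: the weight-zero part of the correction term in (\ref{damiani}) lies in $\C\,1\otimes 1$ (weight-zero elements of $\hat U^\pm$ are scalars), so your $\eps\otimes\id$ argument does dispose of it; and submodules of $\L(\bs f)^*$ are weight submodules since the $k_i$ act semisimply, which is what licenses the weight-by-weight duality in your last step.
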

\begin{proof}
$\L(\bs f)^*$ is irreducible and so isomorphic to some $R(\bs g)$; we shall now show that $\bs g=\bs f^{-1}$.  Indeed, by definition the following diagram commutes for all $x\in \uqgh$:
\be\label{tauj}\begin{tikzpicture}    
\matrix (m) [matrix of math nodes, row sep=3em,    
column sep=4em, text height=2ex, text depth=1ex]    
{     
\L(\bs f)^*\otimes \L(\bs f) & \L(\bs f)^*\otimes \L(\bs f) \\    
\C & \C\\    
};    
\path[->,font=\scriptsize,shorten <= 2mm,shorten >= 2mm]    
(m-1-1) edge node [above] {$x$} (m-1-2)    
(m-2-1) edge node [above] {$\id$} (m-2-2)    
(m-1-1) edge (m-2-1)    
(m-1-2) edge (m-2-2);    
\end{tikzpicture} \quad
\begin{tikzpicture}    
\matrix (m) [matrix of math nodes, row sep=3em,    
column sep=4em, text height=2ex, text depth=1ex]    
{     
\lambda\otimes v & x\on(\lambda\otimes v) =  x^{(1)}\on\lambda\otimes x^{(2)}\on v  \\    
\lambda(v) & \lambda(S(x^{(1)}) x^{(2)}\on v) = \lambda(v).\\    
};    
\path[|->,font=\scriptsize,shorten <= 2mm,shorten >= 2mm]    
(m-1-1) edge (m-1-2)    
(m-2-1) edge (m-2-2)    
(m-1-1) edge (m-2-1)    
(m-1-2) edge (m-2-2);    
\end{tikzpicture}\nn\ee    
Now suppose we take $\lambda$ to be the lowest weight vector in $\L(\bs f)^*$ and $v$ to be the highest weight vector in $\L(\bs f)$. Note $\lambda(v)\neq 0$. It follows from (\ref{damiani}) that
\be \phi_i^\pm(u)\on(\lambda\otimes v) =  \phi_i^\pm(u)\on\lambda\otimes \phi_i^\pm(u)\on v   = g^\pm_i(u) f^\pm_i(u)  \lambda \otimes v.\nn\ee
Therefore $g^\pm_i(u) f^\pm_i(u)  \lambda(v) =\lambda(v)$ identically, which can hold only if the rational functions $f_i(u)$ and $g_i(u)$ obey $g_i(u) f_i(u) = 1$, as claimed.
\end{proof} 
Given a rational $\ell$-weight $\bs f$, let us define $\bs f^\dag$ by
\be f_i^\dag(u) = \frac 1 {f_i(u^{-1})}.\label{dagdef}\ee
Note that $\bs f^\dag$ is again a rational $\ell$-weight, and that $(\bs f^\dag)^\dag = \bs f$.
From (\ref{cinvhatdef}--\ref{cinvhatphi}) one sees that $R(\bs f^{-1})^{\hat\cinv}\cong L(\bs f^\dag)$, where ${}^{\hat\cinv}$ denotes the pull-back via the Cartan involution. Hence we have the following.
\begin{cor}\label{flipcor}
For all $\bs f\in \mc R$, $\L(\bs f^\dag)\cong(\L(\bs f)^*)^{\hat\cinv}$ as $\uqgh$-modules.\qed
\end{cor}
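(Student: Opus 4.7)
The result is essentially immediate from the preceding proposition once one verifies the intermediate claim asserted in the text, namely that $R(\bs f^{-1})^{\hat\cinv} \cong \L(\bs f^\dag)$ as $\uqgh$-modules. My plan is to combine these two facts directly: apply $\hat\cinv$ to both sides of $\L(\bs f)^*\cong R(\bs f^{-1})$ (which is legitimate since $\hat\cinv$ is an algebra automorphism, so pullback preserves isomorphism classes of modules), and then substitute. All the work is thus concentrated in verifying the intermediate claim.

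To verify $R(\bs f^{-1})^{\hat\cinv} \cong \L(\bs f^\dag)$, let $w$ be the lowest $\ell$-weight vector of $R(\bs f^{-1})$, so $x_{i,r}^- \on w = 0$ and $\phi_i^\pm(u) \on w$ is given by the Laurent expansions of $f_i(u)^{-1}$ at $0$ and $\infty$. In the twisted module, where the action of $x\in\uqgh$ is the original action of $\hat\cinv(x)$, equation (\ref{cinvhatdef}) gives $x_{i,r}^+ \cdot_{\mathrm{new}} w = -x_{i,-r}^- \cdot_{\mathrm{old}} w = 0$, so $w$ is singular. Using (\ref{cinvhatphi}), $\phi_i^\pm(u) \cdot_{\mathrm{new}} w = \phi_i^\mp(u^{-1}) \cdot_{\mathrm{old}} w$, and one checks that the formal series obtained are the Laurent expansions at $0$ and $\infty$ respectively of the rational function $1/f_i(u^{-1}) = f_i^\dag(u)$. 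Hence $w$ has $\ell$-weight $\bs f^\dag$ in the twisted module.

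It remains to confirm that the twisted module belongs to $\hat\cat$ so that Theorem \ref{thm1} applies. Weight spaces of $\L(\bs f)^*$ have the same (finite) dimensions as those of $\L(\bs f)$ but sit in the opposite cone (weights are inverted by duality); the further twist by $\hat\cinv$, which inverts $k_i$, flips the cone back, so $(\L(\bs f)^*)^{\hat\cinv}$ satisfies the cone and finite-dimensional-weight-space conditions of Definition \ref{catdef}. Moreover $c^{1/2}$ acts as the identity: it does so on $\L(\bs f)$, hence on $\L(\bs f)^*$ via $S(c^{1/2})=c^{-1/2}$, and then via $\hat\cinv(c^{1/2})=c^{-1/2}$ after twisting. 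Since the twisted module is irreducible (automorphism pullback preserves irreducibility) and contains a singular $\ell$-weight vector of $\ell$-weight $\bs f^\dag$, Theorem \ref{thm1} identifies it with $\L(\bs f^\dag)$.

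The only subtle point—and the one place where the computation could go wrong—is tracking the expansion conventions carefully when applying $\hat\cinv$ to $\phi_i^\pm(u)$, since $\phi_i^+(u)$ is a power series in $u$ while $\phi_i^-(u)$ is a power series in $u^{-1}$, and the substitution $u\mapsto u^{-1}$ swaps the two expansion regimes. Matching these against the definition (\ref{dagdef}) of $\bs f^\dag$ gives the identification cleanly, and the rest of the argument is formal.
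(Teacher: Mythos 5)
Your proposal is correct and follows essentially the same route as the paper: the authors likewise deduce the corollary by combining the preceding proposition $\L(\bs f)^*\cong R(\bs f^{-1})$ with the observation, read off from (\ref{cinvhatdef})--(\ref{cinvhatphi}), that $R(\bs f^{-1})^{\hat\cinv}\cong \L(\bs f^\dag)$. You merely spell out the verification (the action on the lowest $\ell$-weight vector, the expansion conventions under $u\mapsto u^{-1}$, and membership in $\hat\cat$) that the paper leaves implicit.
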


\section{Description of irreducibles in category $\hat\cat$ when $\g=\mf{sl}_2$}\label{sl2sec}
Throughout this section, $\g=\mf{sl}_2$. Recall \cite{Jimbo,CPsl2} that for any $a\in \Cx$ there is a homomorphism of algebras $\ev_a:\uqslth\to\uqslt$ such that $\ev_a(c^{1/2})=1$ and
\be \ev_a(x_{1,r}^+) = q^{-r} a^{r} k_1^r x_1^+,\quad 
    \ev_a(x_{1,r}^-) = q^{-r} a^{r} x_1^- k_1^r.\label{evmap}\ee
These maps are called \emph{evaluation homomorphisms}, and the pull-backs of $\uqslt$-modules by the $\ev_a$ are called  \emph{evaluation modules}. One of the first key results in the theory of finite-dimensional representations of quantum affine algebras is that every irreducible $\uqslth$-module is isomorphic to a tensor product of evaluation modules \cite{CPsl2}. In this section we give the analogous description of the irreducibles in $\hat\cat$. 
\subsection{Strings}
Let $V(\mu)_a\in\Ob\hat\cat$ denote the pull-back via $\ev_a$ of the irreducible $\uqslth$-module $V(\overline{\mu\omega_1})$, $a\in \Cx$, $\mu\in \C$. It is finite-dimensional if and only if $\mu\in \Z_{\geq 0}$. It is irreducible, with highest $\ell$-weight given by the rational function
\be S_\mu(a) :u\mapsto  q^\mu\frac{1-q^{-\mu-1} au}{1-q^{\mu-1} au}.\label{Sdef}\ee
We refer to any rational function of $u$ of this form as a \emph{string}.
\begin{defn}
We say that two strings $S_\mu(a)$ and $S_\nu(b)$, $a,b\in \Cx$,  $\mu,\nu\in \C$, are in \emph{general position} if 
\begin{enumerate}
\item[(1)] if $\mu\notin \Z_{\geq 0}$ and $\nu\notin \Z_{\geq 0}$ then $aq^{-\mu-1}\notin bq^{\nu-1-2\Z_{\geq 0}}$ and $bq^{-\nu-1}\notin aq^{\mu-1-2\Z_{\geq 0}}$;
\item[(2i)] if $\mu\in \Z_{\geq 0}$ and $\nu\notin\Z_{\geq 0}$ then neither $bq^{-\nu-1}$ nor $bq^{\nu-1}$ lie in $aq^{\mu-1-2\Z_{\geq 0}}\cap aq^{-\mu-1+2\Z_{\geq 0}}$;
\item[(2ii)] if $\nu\in \Z_{\geq 0}$ and $\mu\notin\Z_{\geq 0}$ then neither $aq^{-\mu-1}$ nor $aq^{\mu-1}$ lie in $bq^{\nu-1-2\Z_{\geq 0}}\cap bq^{-\nu-1+2\Z_{\geq 0}}$;
\item[(3)] if $\mu\in\Z_{\geq 0}$ and $\nu\in\Z_{\geq 0}$ then the sets \be aq^{\mu-1-2\Z_{\geq 0}}\cap aq^{-\mu-1+2\Z_{\geq 0}}\quad\text{and}\quad bq^{\nu-1-2\Z_{\geq 0}}\cap bq^{-\nu-1+2\Z_{\geq 0}}\nn\ee are either disjoint, or one is contained in the other.
\end{enumerate} 
\end{defn}
We say that the string $S_\mu(a)$ \emph{starts} at $aq^{-\mu-1}$ and \emph{ends} at $aq^{\mu-1}$. We call a string \emph{finite} if it starts to the left of its end, where we say $a$ is \emph{to the left} 
of $b$ if $a\in bq^{-2\Z_{\geq 0}}$. 
Thus $S_\mu(a)$ is finite if and only if $\mu\in\Z_{\geq 0}$. If $S_\mu(a)$ is finite we associate it with the finite set 
\be aq^{\mu-1-2\Z_{\geq 0}}\cap aq^{-\mu-1+2\Z_{\geq 0}} = \{aq^{-\mu-1},aq^{-\mu+1},\dots,aq^{\mu-3}, aq^{\mu-1}\},\nn\ee and we say $b$ is \emph{inside} $S_\mu(a)$ if it belongs to this set. 
In this language, two strings are in general position if and only if 
\begin{enumerate}
\item if neither string is finite then neither string starts to the left of the end of the other;
\item if one string is finite and the other is not, then the non-finite string neither starts nor ends inside the finite one;
\item if both strings are finite then their sets are either disjoint or one is contained in the other. 
\end{enumerate}
The final part is the usual condition for finite-dimensional representations, c.f. \cite{CPsl2,CP94}. The reader should be warned that in \cite{CPsl2} the $q$-string corresponding to (\ref{Sdef}) is defined not to include the element $aq^{-\mu-1}$, in contrast to our convention.

\begin{prop}
If $\L(\bs f)\in\Ob\hat\cat$ then the corresponding rational function $f(u)$ can be written in the form
\be f(u)= \prod_{k=1}^r S_{\mu_k}(a_k), \quad \mu_1,\dots,\mu_r\in \Cx,\,\, a_1,\dots,a_r\in \Cx \label{facf}\ee 
in such a way that each pair $(S_{\mu_i}(a_i), S_{\mu_j}(a_j))$, $1\leq i< j\leq r$, is in general position. 
\end{prop}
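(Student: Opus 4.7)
The plan is to proceed by induction on the total degree $r := |A| = |B|$ of the rational function $f(u)$, where $A, B \subset \Cx$ are the multisets of poles and zeros of $f$ (equal in size since $f$ is regular at both $0$ and $\infty$). The base case $r = 0$ forces $f \equiv \pm 1$ by the condition $f(0)f(\infty) = 1$, handled by the empty product (modulo a minor branch adjustment of a single $\mu_k$ to cover $f = -1$).

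For the inductive step, I would extract a single string $S_{\mu_0}(a_0')$ from $f$, determined by a zero-pole pair $(b_0, a_0)$ via $\mu_0 = \log(a_0/b_0)/(2\log q)$ and $a_0' = b_0 q^{\mu_0+1}$ with an appropriate branch of $\mu_0$ so that the overall prefactor agrees with $f(0)$. Then $f' := f/S_{\mu_0}(a_0')$ has $r-1$ poles and $r-1$ zeros; by the induction hypothesis it factors into $r-1$ strings pairwise in general position, so it remains only to verify that $S_{\mu_0}(a_0')$ is in general position with each such factor.

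The core construction is a greedy, three-phase extraction based on the $q^2$-equivalence relation on $\Cx$ defined by $x\sim y \iff xy^{-1}\in q^{2\Z}$, under which each equivalence class is totally ordered by the power of $q^2$. At each inductive step, the extracted pair is chosen in the following order of priority: (Phase 1) an \emph{innermost finite pair}, meaning a zero $b_0$ and a pole $a_0$ lying in a common class with $b_0$ to the left of $a_0$ and with no zero or pole of $f$ strictly between them; (Phase 2), if no finite pair is available in any class, a within-class pair with $a_0$ to the left of $b_0$ (producing a non-finite string with $\mu_0 \in \Z_{<0}$); (Phase 3), if no class contains both a zero and a pole, any cross-class pair (producing a string with $\mu_0 \notin \Z$).

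The main obstacle is the case analysis needed to verify general position between $S_{\mu_0}(a_0')$ and each string occurring in the inductive factorization of $f'$. For Phase 1 extractions, the innermost property forces every endpoint of any factor of $f'$ to lie outside the interior of $[b_0, a_0]$, from which one obtains the nested-or-disjoint condition (3) for finite/finite pairs in the same class and the analogous conditions (1), (2i), (2ii) for the other configurations, by short case analyses in each $q^2$-class. For Phase 2 extractions, the structure that all poles lie to the left of all zeros in the class directly gives condition (1) for any pair of resulting non-finite within-class strings. For Phase 3 extractions, the fact that zero-containing classes are disjoint from pole-containing classes trivializes the conditions, since each condition in the definition asks for membership in a $q^{2\Z_{\geq 0}}$-shift that can only be nontrivial when the relevant points share a $q^2$-class. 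The bookkeeping, in essence, extends the classical matching argument from the finite-dimensional theory \cite{CPsl2} to allow non-integer $\mu_k$'s and unbalanced classes.
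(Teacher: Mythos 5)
Your route is genuinely different from the paper's. The paper starts from an \emph{arbitrary} factorization of $f$ into strings and repeatedly applies a local swap move (exchanging the denominators of a pair that fails general position), proving termination by showing that the sorted multiset of positive-integer $\mu_k$'s strictly decreases lexicographically; you instead build the factorization greedily, one string at a time, by induction on the number of poles. Your version is more constructive and makes the shape of the final factorization explicit, at the cost of a longer case analysis; the paper's is shorter but hides the combinatorics in the termination argument.

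There is, however, a genuine gap in your inductive step. You invoke the induction hypothesis only as an existence statement --- ``$f'$ factors into $r-1$ strings pairwise in general position'' --- and then assert that the innermost property forces every endpoint of every factor of $f'$ to lie outside the interior of $[b_0,a_0]$. That does not follow from what you have: the starts and ends of the strings in an arbitrary factorization of $f'$ need not be zeros and poles of $f'$, because the numerator of one string can cancel against the denominator of another (e.g.\ $S_1(a)S_1(aq^2)=S_2(aq)$ creates a start/end coincidence at $a$ that is invisible in the product). A priori a factor of $f'$ could therefore start or end strictly inside $(b_0,a_0)$, and your entire general-position verification collapses. To close this you must either (i) prove that in \emph{any} pairwise-general-position factorization no string starts where another ends --- this is true, since in each of the four finite/non-finite combinations such a pair violates the relevant clause of the definition, and it forces the multiset of starts (resp.\ ends) to coincide with the zeros (resp.\ poles) of the function --- or (ii) strengthen the induction hypothesis to record that your algorithm only ever pairs genuine zeros of the remaining function with genuine poles. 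Either repair is routine but is real content that the proof cannot do without. Separately, note that when $f$ has a zero of multiplicity $\geq 2$ at the start $b_0$ of an extracted finite string, some later non-finite factor must also start at $b_0$, which lies in the finite string's set; read literally, clause (2) of the general-position definition then fails, and no factorization avoids this (try $f(u)=C(1-au)^2/((1-aq^2u)(1-bu))$ with $b\notin aq^{2\Z}$). This is an endpoint defect shared with the paper's own proof (whose swap becomes a no-op there), but your case analysis should state explicitly which reading of ``inside'' it relies on, since your phrase ``outside the interior'' tacitly allows exactly these coincidences.
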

\begin{proof}
It is clear that any rational function $f(u)$ obeying the conditions of Definition \ref{rdef} can be written in the form (\ref{facf}). (We require $\mu_1,\dots,\mu_r\neq 0$ so that all factors are non-trivial. If $f(u)=1$ we have $r=0$.) To see that these factors may be chosen to be pairwise in general position we argue as follows. If not all pairs are in general position, then by definition we can always find some pair, call it $(S_{\mu}(a),S_\nu(b))$, such that
$bq^{-\nu-1}\in aq^{\mu-1-2\Z_{\geq 0}}$ but $bq^{\nu-1}\notin aq^{\mu-1-2\Z_{\geq 0}} \cap bq^{-\nu-1+2\Z_{\geq 0}}$. Let us write
\be S_\mu(a) =: \frac{A^{-1}-Au}{B^{-1}-Bu},\quad S_\nu(b) =: \frac{C^{-1}-Cu}{D^{-1}-Du}.\nn\ee
We swap this pair for the new pair $(S_{\mu'}(a'),S_{\nu'}(b'))$ defined by
\be S_{\mu'}(a') :=  \frac{A^{-1}-Au}{D^{-1}-Du},\quad S_{\nu'}(b') := \frac{C^{-1}-Cu}{B^{-1}-Bu}.\nn\ee
Obviously $S_{\mu}(a) S_\nu(b) = S_{\mu'}(a')S_{\nu'}(b')$. By inspection one checks that the new pair are in general position. We shall now argue that after some finite number of such swaps all pairs will be in general position. Consider the partial ordering on tuples $(\mu_1,\dots,\mu_r)\in \Cx$ defined as follows: let $s(\mu_1,\dots,\mu_r)$ be the weakly increasing $r$-tuple obtained by discarding any $\mu_k\notin \Z_{>0}$,  sorting those that remain into weakly increasing order, and then appending entries $\8$ as needed. Then we say $(\mu_1,\dots,\mu_r) < (\nu_1,\dots,\nu_r)$ if and only if $s(\mu_1,\dots,\mu_r)$ precedes $s(\nu_1,\dots,\nu_r)$ lexicographically; that is, if and only if for some $k\geq 1$, $s(\mu_1,\dots,\mu_r)_k < s(\nu_1,\dots,\nu_r)_k$ and $s(\mu_1,\dots,\mu_r)_\ell = s(\nu_1,\dots,\nu_r)_\ell$ for all $1\leq \ell < k$. 
In the swapping procedure above at least one of $\mu',\nu'$ is always a positive integer and moreover \be\nn\min(\{\mu',\nu'\}\cap \Z_{>0})<\min((\{\mu,\nu\}\cap \Z_{>0})\cup\{\8\}).\ee Thus, repeated swapping produces a strictly decreasing sequence of tuples. 
So the swapping process must terminate, and all pairs are then in general position.
\end{proof}
\begin{rem}
In contrast to the case of finite-dimensional representations, this factorization is not always unique. For example 
\bea S_{-5}(a)S_{-9}(a) &=&\nn q^{-5}\frac{1-q^{4}au}{1-q^{-6}au}q^{-9}\frac{1-q^{8}au}{1-q^{-10}au} \\ &=&
q^{-7}\frac{1-q^{4}au}{1-q^{-10}au}q^{-7}\frac{1-q^{8}au}{1-q^{-6}au} = S_{-7}(q^{-2}a) S_{-7}(q^2a) \nn\eea
and both $(S_{-5}(a),S_{-9}(a))$ and $(S_{-7}(q^{-2}a), S_{-7}(q^2a))$ are in general position.
\end{rem}
\subsection{Irreducible tensor products}
The following proposition, stated in \cite{CPsl2} for finite-dimensional modules, remains valid in $\hat\cat$. 
\begin{prop}\label{sl2basis}
There is a basis $(v_i)_{0\leq i \leq \dim(V(\mu)_a)-1}$ of $V(\mu)_a$ on which the action of the generators $x^\pm_{1,k}$ is given by:
\begin{align} 
x_{1,k}^+ \on v_i &= a^k q^{k(\mu-2i+1)} [\mu-i+1]_q v_{i-1}, \nn\\
x_{1,k}^- \on v_i &= a^k q^{k(\mu-2i-1)} [i+1]_q v_{i+1} .\nn\end{align}
(Here $v_{-1}\equiv 0$ and, if $\dim(V(\mu)_a)<\8$, $v_{\dim(V(\mu)_a)}\equiv 0$.)
\qed\end{prop}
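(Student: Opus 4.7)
The plan is to reduce everything to the well-known action of $\uqslt$ on its highest weight module $V(\overline{\mu\omega_1})$ and then pull back via $\ev_a$ using the explicit formulae (\ref{evmap}). First I would fix the standard basis $(v_i)_{i\ge 0}$ of $V(\overline{\mu\omega_1})$ characterized inductively by $v_0$ being the highest weight vector and $v_{i+1} := x_1^-\on v_i / [i+1]_q$ (when $\mu \in \Z_{\ge 0}$ this basis truncates at $v_\mu$ since $v_{\mu+1}$ would be a singular vector of weight $q^{-\mu-2}$ inside an irreducible of highest weight $q^\mu$, forcing $v_{\mu+1}=0$; otherwise the basis is infinite). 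On this basis one has the classical formulae
\begin{equation*}
k_1\on v_i = q^{\mu-2i}v_i, \qquad x_1^+\on v_i = [\mu-i+1]_q v_{i-1}, \qquad x_1^-\on v_i = [i+1]_q v_{i+1},
\end{equation*}
where the $x_1^+$ formula is verified by a short induction on $i$ using the relation $[x_1^+,x_1^-] = (k_1-k_1^{-1})/(q-q^{-1})$ and the identity $[\mu-i]_q + [i+1]_q[\mu-2i]_q\cdot(\text{coefficient})$ telescoping correctly; this is entirely standard, see e.g.\ \cite{CPbook}, Chapter 10.

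Next I would apply $\ev_a$ to the Drinfeld generator $x_{1,k}^+$ and compute
\begin{equation*}
\ev_a(x_{1,k}^+)\on v_i = q^{-k}a^k\, k_1^k x_1^+\on v_i = q^{-k}a^k [\mu-i+1]_q\, k_1^k\on v_{i-1} = a^k q^{k(\mu-2i+1)}[\mu-i+1]_q\, v_{i-1},
\end{equation*}
using $k_1\on v_{i-1} = q^{\mu-2(i-1)}v_{i-1}$. The analogous computation for $x_{1,k}^-$ gives
\begin{equation*}
\ev_a(x_{1,k}^-)\on v_i = q^{-k}a^k\, x_1^- k_1^k\on v_i = a^k q^{k(\mu-2i)}[i+1]_q\, v_{i+1} \cdot q^{-k} = a^k q^{k(\mu-2i-1)}[i+1]_q\, v_{i+1},
\end{equation*}
which is precisely the asserted formula. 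The boundary conventions $v_{-1}\equiv 0$ and (in the finite-dimensional case) $v_{\dim V(\mu)_a}\equiv 0$ are automatic from the standard basis.

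There is no real obstacle here: the only thing to be mildly careful about is the two cases $\mu\in\Z_{\ge 0}$ versus $\mu\notin\Z_{\ge 0}$, but in both cases the same recursive construction of the basis works, since when $\mu\notin\Z_{\ge 0}$ the coefficient $[\mu-i+1]_q$ never vanishes and the Verma module is irreducible (so it equals $V(\overline{\mu\omega_1})$), while when $\mu\in\Z_{\ge 0}$ the coefficient $[\mu-\mu]_q = 0$ cuts the basis off at length $\mu+1$, matching $\dim V(\overline{\mu\omega_1})$. Everything else is a single line of algebra using (\ref{evmap}).
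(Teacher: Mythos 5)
Your proposal is correct and follows exactly the route the paper intends: the paper's proof is the one-line remark that ``the check is straightforward, using (\ref{evmap}) and the usual basis of the irreducible $\uqslt$-module $V(\overline{\mu\omega_1})$,'' and you have simply carried out that straightforward check, with the correct classical formulae and the exponent bookkeeping coming out right in both cases.
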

\begin{proof}
The check is straightforward, using (\ref{evmap}) and the usual basis of the irreducible $\uqslt$-module $V(\overline{\mu\omega_1})$.
\end{proof}
\begin{prop}\label{sl2dualbasis}
In the dual basis $(v^*_i)_{0\leq i \leq \dim(V(\mu)_a)-1}$ of $(V(\mu)_a)^*$ the action of the generators $x^\pm_{1,k}$ is given by:
\begin{align} 
x_{1,k}^+ \on v^*_i &= -a^{k} q^{-k(\mu-2i-3)-(\mu-2i-2)}[\mu-i]_q v^*_{i+1}, \nn\\
x_{1,k}^- \on v^*_i &=-a^{k} q^{-k(\mu-2i-1) + (\mu-2i)} [i]_q v^*_{i-1} .\nn\end{align}
\end{prop}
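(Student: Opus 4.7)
The approach is direct computation from the dual-module formula $(x\cdot\lambda)(v)=\lambda(S(x)\cdot v)$, where $S$ is the antipode of $\uqgh$. First I handle $k=0$: the antipode on Chevalley generators is given after \eqref{chevgens} as $S(x_1^+)=-x_1^+k_1^{-1}$ and $S(x_1^-)=-k_1x_1^-$, and applying these to $v_j$ using Proposition \ref{sl2basis}, pairing with $v_i^*$, and reading off the unique surviving term yields the $k=0$ case of both formulas directly.

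For general $k$, the cleanest route is to apply Corollary \ref{flipcor}. A short calculation shows that $S_\mu(a)^\dag=S_\mu(q^2a^{-1})$, so the corollary furnishes an isomorphism of $\uqgh$-modules $\Psi\colon V(\mu)_{q^2a^{-1}}\to(V(\mu)_a^*)^{\hat\cinv}$ satisfying $\Psi(x\cdot w)=\hat\cinv(x)\cdot\Psi(w)$. Since $\Psi$ preserves weight, and the highest weight vector $w_0$ of $V(\mu)_{q^2 a^{-1}}$ must map, up to scalar, to the vector of $(V(\mu)_a^*)^{\hat\cinv}$ annihilated by the twisted lowering operator---namely $v_0^*$---one gets $\Psi(w_i)=c_iv_i^*$ for nonzero scalars $c_i$. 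The ratio $c_{i+1}/c_i$ is fixed by applying the intertwining identity with $x=x_1^-$ and invoking the $k=0$ formula for $x_1^+\cdot v_i^*$ already established. Once the $c_i$ are known, applying the intertwining identity with $x=x_{1,k}^\mp$ and using Proposition \ref{sl2basis} on the source module (with evaluation parameter $q^2a^{-1}$, and using the identity $\hat\cinv(x_{1,k}^\mp)=-x_{1,-k}^\pm$) transports the source-side action across $\Psi$ to yield the claimed formulas for $x_{1,k}^\pm\cdot v_i^*$.

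The principal obstacle is bookkeeping: the scalars $c_i$ combine ratios of $q$-integers with $q$-powers; the parameter substitution $a\mapsto q^2a^{-1}$ shifts every $q$-exponent; and the swap $k\mapsto-k$ together with the sign from $\hat\cinv$ must be tracked consistently. As a cross-check, the resulting expressions should satisfy the Drinfeld relation $[x_{1,r}^+,x_{1,s}^-]=(q-q^{-1})^{-1}(\phi^+_{1,r+s}-\phi^-_{1,r+s})$ applied to each $v_i^*$, with the right-hand side computed from the explicit $\ell$-weight of $v_i^*$ as a vector in the lowest-$\ell$-weight module $R(S_\mu(a)^{-1})$ guaranteed by the proposition preceding this one. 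Alternatively, one may proceed by induction on $|k|$ using $x_{1,k\pm1}^\pm=\pm[2]_q^{-1}[h_{1,\pm1},x_{1,k}^\pm]$ together with the eigenvalues of $h_{1,\pm1}$ on $v_i^*$, which are the first-order Laurent coefficients at $u=0$ and $u=\infty$ of the $\ell$-weight rational function of $v_i^*$.
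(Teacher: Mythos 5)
Your $k=0$ step is fine, and the overall idea of transporting the action through duality is natural; but as written the general-$k$ part of your plan does not reproduce the stated formulas. Carrying it out: Proposition \ref{sl2basis} with evaluation parameter $q^2a^{-1}$ gives $x_{1,k}^-\, w_i=a^{-k}q^{k(\mu-2i+1)}[i+1]_q\, w_{i+1}$, and pushing this through $\Psi$ with $\hat\cinv(x_{1,k}^-)=-x_{1,-k}^+$ and your value of $c_{i+1}/c_i$ (fixed by the $k=0$ case) yields $x_{1,k}^+\, v_i^*=-a^kq^{-k(\mu-2i+1)-(\mu-2i-2)}[\mu-i]_q\, v_{i+1}^*$, whose $q$-exponent disagrees with the claimed $-k(\mu-2i-3)-(\mu-2i-2)$ by $q^{-4k}$. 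The source of the mismatch is that Corollary \ref{flipcor} cannot be taken at face value against the explicit conventions of \S\ref{sl2sec}: computing directly from Proposition \ref{sl2basis} and the antipode, the $\ell$-weight of $v_0^*$ is $S_\mu(aq^4)^{-1}$, not $S_\mu(a)^{-1}$, so $\bigl((V(\mu)_a)^*\bigr)^{\hat\cinv}\cong V(\mu)_{q^{-2}a^{-1}}$ rather than $V(\mu)_{q^{2}a^{-1}}=\L(S_\mu(a)^\dag)$; the intertwiner $\Psi$ you invoke therefore has the wrong source module, off by a spectral shift $q^{4}$. (Concretely, for the stated coproduct one finds $\Delta(k_1h_{1,1})=k_1h_{1,1}\otimes k_1+k_1\otimes k_1h_{1,1}+(q^{-2}-q^2)\,x_1^+k_1\otimes x_{1,1}^-k_1$, so the correction term in (\ref{damiani}) lies in $\hat U^+\otimes\hat U^-$ and does not vanish on $(\text{lowest weight})\otimes(\text{highest weight})$; the normalization in $\L(\bs f)^*\cong R(\bs f^{-1})$, and hence in Corollary \ref{flipcor}, carries this shift.) For the same reason, both of your proposed cross-checks --- the Drinfeld relation tested against the $\ell$-weight read off from $R(S_\mu(a)^{-1})$, and the induction on $|k|$ using eigenvalues of $h_{1,\pm1}$ taken from that $\ell$-weight --- are anchored to the wrong spectral parameter and would not rescue the argument.

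The paper's own proof avoids all of this: it is a single direct computation using the identity $\ev_a\circ S=S\circ\ev_{aq^2}$ (verified on the Chevalley generators via $x_0^+\equiv x_{1,1}^-k_1^{-1}$ and $x_0^-\equiv k_1x_{1,-1}^+$), which converts $S(x_{1,k}^\pm)$ acting on $V(\mu)_a$ into $S$ of an explicit element of $\uqslt$ and yields all $k$ at once from (\ref{evmap}). If you want to keep your duality-based strategy, you must determine the spectral parameter of the source evaluation module independently (e.g.\ from the $k=\pm1$ action computed via the antipode on $x_0^\pm$) rather than from Corollary \ref{flipcor}.
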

\begin{proof} By direct calculation, making use of the relation $\ev_a\circ S = S\circ \ev_{aq^2}$ satisfied by the antipode of $\uqslth$ \cite{CPsl2}. (One checks this equality on the Chevalley generators (\ref{chevgens}), using the relations $k_0\equiv k_1^{-1}$, $x_{0}^+\equiv x_{1,1}^-k_1^{-1}$ and $x_0^- \equiv k_1 x_{1,-1}^{+}$.)\end{proof}
\begin{thm}\label{sl2thm} Let $a_1,\dots,a_r\in \Cx$ and $\mu_1,\dots,\mu_r\in \Cx$. The tensor product 
\be\label{nap} V(\mu_1)_{a_1} \otimes \dots \otimes V(\mu_r)_{a_r} \ee 
is irreducible if and only if each pair $(S_{\mu_i}(a_i), S_{\mu_j}(a_j))$, $1\leq i< j\leq r$, is in general position. 
\end{thm}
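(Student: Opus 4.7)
The plan is to reduce the statement to the two-factor case $r=2$ and then analyze $V(\mu)_a \otimes V(\nu)_b$ directly using the explicit basis of Proposition \ref{sl2basis} (and its dual analogue, Proposition \ref{sl2dualbasis}), in the spirit of the classical argument for finite-dimensional modules \cite{CPsl2} but now allowing $\mu,\nu \notin \Z_{\geq 0}$.

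For the two-factor ``if'' direction, given a hypothetical non-highest singular $\ell$-weight vector $w = \sum_{i+j=n} c_{ij}\, v_i \otimes v_j$, the conditions $x_{1,k}^+ w = 0$ for all $k\in\Z$ obtained from Proposition \ref{sl2basis} together with the Drinfeld--Jimbo coproduct produce an infinite linear system in the $(c_{ij})$ whose coefficients are polynomials in $aq^{\mu-2i-1}$ and $bq^{\nu-2j-1}$. Non-triviality of a solution turns out to be equivalent to certain coincidences among the four boundary points $aq^{-\mu-1},\,aq^{\mu-1},\,bq^{-\nu-1},\,bq^{\nu-1}$, and these are precisely the coincidences ruled out by the general position hypothesis. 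The analogous analysis on the dual side, via Proposition \ref{sl2dualbasis}, eliminates non-highest co-singular vectors, so neither a proper submodule nor a proper non-zero quotient can exist, and the tensor product is irreducible.

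For the two-factor ``only if'' direction I would argue by contrapositive, exhibiting in each failure case of Definition 4.1 an explicit non-zero singular vector distinct from $v_0 \otimes v_0$. In case (3), where both strings are finite, the construction is classical (see \cite{CPsl2}). In case (1), with both $\mu,\nu\notin\Z_{\geq 0}$ and $aq^{-\mu-1} = bq^{\nu-1-2m}$ for some $m\geq 0$, one writes down an ansatz $\sum_{i=0}^{m+1} \alpha_i\, v_i \otimes v_{m+1-i}$ and solves the recurrence imposed by $x_{1,k}^+ = 0$; the coincidence of boundary points guarantees a non-trivial solution. Cases (2i) and (2ii) are handled by variants of the same ansatz.

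Finally I would extend to general $r$ by induction. For ``only if'', Proposition \ref{irrbothprop} lets one commute in-general-position neighbors so as to bring any offending pair together, after which reducibility of the corresponding two-factor subquotient forces reducibility of the whole. For ``if'', assuming $W := V(\mu_1)_{a_1} \otimes \cdots \otimes V(\mu_{r-1})_{a_{r-1}}$ already irreducible and so isomorphic to $\L(\prod_{k<r} S_{\mu_k}(a_k))$ by Proposition \ref{irrbothprop}, one repeats the two-factor singular-vector analysis on $W \otimes V(\mu_r)_{a_r}$ ruling out interference between $S_{\mu_r}(a_r)$ and each $S_{\mu_k}(a_k)$ separately. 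The main technical obstacle is expected to lie in cases (2i) and (2ii), where one string is finite and the other is not: these do not arise in the finite-dimensional setting, so the recursive analysis of singular vectors must be carried out afresh, with careful attention to those matrix coefficients $[\mu-i+1]_q$ of Proposition \ref{sl2basis} that vanish at $i = \mu+1$ on the finite side while their counterparts on the infinite side never do.
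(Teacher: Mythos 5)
Your treatment of the two-factor case and of the ``only if'' direction for general $r$ matches the paper's in substance: the paper also works factor by factor with the $\uqslt$-decomposition of $V(\mu)_a\otimes V(\nu)_b$, checks when the intermediate $\uqslt$-highest weight vectors $\Omega_p$ generate proper $\uqslth$-submodules (following \cite{CPsl2}, \S4.8), and handles submodules containing $\Omega_0$ by the dual-side computation via Proposition \ref{sl2dualbasis}; reducibility for a bad pair is likewise propagated to general $r$ by reordering via Proposition \ref{irrbothprop}.

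The gap is in your inductive step for the ``if'' direction at general $r$. You propose to collapse $W:=V(\mu_1)_{a_1}\otimes\cdots\otimes V(\mu_{r-1})_{a_{r-1}}$ into a single irreducible $\L\bigl(\prod_{k<r}S_{\mu_k}(a_k)\bigr)$ and then ``repeat the two-factor singular-vector analysis'' on $W\otimes V(\mu_r)_{a_r}$. But everything that made the two-factor analysis tractable -- the explicit bases of Propositions \ref{sl2basis} and \ref{sl2dualbasis}, the multiplicity-free $\uqslt$-decomposition with its distinguished vectors $\Omega_p$, and the known action of $x^\pm_{1,k}$ on them -- is specific to \emph{evaluation} modules. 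The module $W$ is not an evaluation module, its $\uqslt$-weight spaces are not one-dimensional, and locating all its singular vectors paired against $V(\mu_r)_{a_r}$ is essentially the problem you are trying to solve; ``ruling out interference with each $S_{\mu_k}(a_k)$ separately'' does not follow from the two-factor case. The paper avoids this entirely: it proves instead that the full $r$-fold product is cyclic on $\Omega=\Omega'\otimes v_0$ by an inner induction on $i$, expressing $x^-_{1,k}\on(\Omega'\otimes v_i)$ through the truncated coproduct $\Delta x^-_{1,k}\equiv x^-_{1,k}\otimes 1+\sum_{p}\phi^+_{1,p}\otimes x^-_{1,k-p}$ modulo $UX_+\otimes UX_-^2$, assembling the square matrix $A=(A_{k,j})$ of coefficients, and invoking the factorized determinant $\det A= q^{\sum\mu_j}\prod_j b_j\prod_{k>j}(b_k-q^{-2\mu_j}b_j)$ from \cite{CPsl2}; the vanishing loci of $\det A$ are then excluded by general position, with the new infinite-dimensional subtlety handled by reordering so that $V(\mu_r)_{a_r}$ has minimal dimension among the factors (so that the index $i$ stays below $\mu_r$ whenever $\mu_r\in\Z_{\geq0}$). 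You would need to import this determinant argument, or some equivalent cyclicity criterion, to close your induction.
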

\begin{proof}
We show this first for the case $r=2$. Consider $V(\mu)_a\otimes V(\nu)_b$. As representations of $\uqslt$,
\be V(\overline{\mu\omega_1})\otimes V(\overline{\nu\omega_1}) = \bigoplus_{i=0}^M V\left(\overline{(\mu+\nu-2i)\omega_1}\right)\nn \ee
where 
\be M= \begin{cases} \min(\{\mu,\nu\} \cap \Z_{\geq 0}) & \text{if }\{\mu,\nu\} \cap \Z_{\geq 0}\neq \emptyset, \\
                          \8  & \text{otherwise}.\end{cases}\nn\ee
For each $0\leq p\leq M$, let $\Omega_p\in V\left(\overline{\mu\omega_1}\right)\otimes V(\overline{\nu\omega_1})$ be a $\uqslt$-highest weight vector of the $\uqslt$-submodule $V\left(\overline{(\mu+\nu-2p)\omega_1}\right)$. Exactly as in \cite{CPsl2}, \S4.8, one verifies that, for each $1\leq p\leq M$, $\Omega_p$ generates a proper $\uqslth$-submodule of $V(\mu)_a\otimes V(\nu)_b$ if and only if
\be \frac{b}{a} = q^{\mu+\nu-2p+2}.\nn\ee
Thus $V(\mu)_a\otimes V(\nu)_b$ has a submodule not containing $\Omega_0$ if and only if
\be \frac{b}{a} \notin \{q^{\mu+\nu-2p +2} : 1\leq p\leq M\}.\nn\ee

$(V(\mu)_a\otimes V(\nu)_b)^*\cong (V(\nu)_b)^* \otimes (V(\mu)_a)^*$ is lowest weight as a $\uqslt$-module. 
Let $\Omega_0^*$ be a $\uqslt$-lowest weight vector of $(V(\mu)_a\otimes V(\nu)_b)^*$. It is unique up to scale and it annihilates every element except $\Omega_0$ of any basis of $V(\mu)_a\otimes V(\nu)_b$ consisting of weight vectors.

The module $V(\mu)_a\otimes V(\nu)_b$ has a proper submodule containing $\Omega_0$ if and only if $(V(\nu)_b)^* \otimes (V(\mu)_a)^*$ has a factor module containing $\Omega_0^*$; that is, if and only if $(V(\nu)_b)^* \otimes (V(\mu)_a)^*$ has a submodule not containing $\Omega_0^*$. Given Proposition \ref{sl2dualbasis}, a similar calculation to the one referred to above shows that this is the case if and only if 
\be \frac{a}{b} \notin \{q^{\mu+\nu-2p +2} : 1\leq p\leq M\}.\nn\ee

Therefore, $V(\nu)_a\otimes V(\mu)_b$ has \emph{no} proper submodule, i.e. is irreducible, if and only if
\be \frac{a}{b} \notin \{q^{\pm(\mu+\nu-2p +2)} : 1\leq p\leq M\}.\nn\ee
By inspection one verifies that this condition holds precisely when $(S_\mu(a),S_\nu(b))$ are in general position. 

Turning to the general case, for the ``only if'' part we argue as follows. Suppose some pair $(S_{\mu_i}(a_i), S_{\mu_j}(a_j))$, $1\leq i< j\leq r$, is not in general position. If the tensor product (\ref{nap}) is irreducible then it is irreducible for all orderings of the tensor factors, c.f. Proposition \ref{irrbothprop}. So it is enough to show it is reducible for some ordering of the tensor factors. Pick any ordering in which $V(\mu_i)_{a_i}$ and $V(\mu_j)_{a_j}$ are adjacent; then the tensor product is indeed reducible, because it has a factor $V(\mu_i)_{a_i}\otimes V(\mu_j)_{a_j}$ which is reducible, as above. 

Now we prove the ``if'' part. The argument is essentially as in \cite{CPsl2}, and is by an induction on the number $r$ of tensor factors. We have the case $r=2$ above. For the inductive step, we may suppose that $V(\mu_1)_{a_1} \otimes\dots\otimes V(\mu_{r-1})_{a_{r-1}}$ is generated as a $\uqslth$-module by a tensor product of highest weight vectors of the tensor factors, 
\be\nn\Omega':=v_0\otimes \dots \otimes v_0\in V(\mu_1)_{a_1} \otimes\dots\otimes V(\mu_{r-1})_{a_{r-1}},\ee and therefore that $V(\mu_1)_{a_1} \otimes \dots\otimes V(\mu_{r-1})_{a_{r-1}}\otimes V(\mu_r)_{a_r}$ is generated as a $\uqslth$-module by the vectors $(\Omega'\otimes v_{i})_{0\leq i\leq \dim(V(\mu_r)_{a_r}}$.  We now argue by induction on $i$ that $\Omega'\otimes v_i\in \uqslth \on \Omega$, where $\Omega := \Omega'\otimes v_0$.  This is trivially true for $i=0$. For the inductive step, suppose $\Omega'\otimes v_j\in \uqslth \on \Omega$ for all $0\leq j\leq i$, and consider the action of $x_{1,k}^-$ on $\Omega'\otimes v_i$. 

Recall from \cite{CPsl2} the following property of the comultiplication of the quantum loop algebra $U:= \uqslth/(c^{1/2}-\id)$. Let $X_\pm$ be the subspaces of $U$ spanned by $(x^\pm_{1,k})_{k\in \Z}$. Then for all $k\in\Z_{\geq 1}$, 
\be\Delta x_{1,k}^- \equiv x_{1,k}^- \otimes 1 + \sum_{i=0}^{k-1} \phi^+_{1,i} \otimes x^-_{1,k-i} \quad\text{modulo}\quad UX_+ \otimes UX_-^2.\nn\ee
Therefore if we let $d_{k,j}$ be the eigenvalue of $\phi^+_{1,k}$ on the highest weight space of $V(\mu_1)_{a_1} \otimes\dots\otimes V(\mu_{j})_{a_{j}}$, set $b_r:=a_rq^{\mu_r-2i}$ and $b_j:=a_jq^{\mu_j}$ for each $1\leq j<r$, and define 
\be A_{k,j} := \sum_{p=0}^{k-1} d_{p,j} b_{j+1}^{k-p},\nn\ee
then for all $k\in \Z_{\geq 1}$ we have
\be x_{1,k}^- \on (\Omega'\otimes v_i) = \sum_{j=0}^{r-1} A_{k,j} \left(\id^{\otimes j} \otimes x^-_1 \otimes \id^{\otimes (r-j-1)}\right) \on (\Omega'\otimes v_i).\label{nxtvec} \ee
Note that $A=(A_{k,j})_{1\leq k\leq r,0\leq j\leq r-1}$ is a square matrix. To complete the proof it is enough to show that $\det A\neq 0$, for then equation (\ref{nxtvec}) allows $\Omega'\otimes v_{i+1}$ to be expressed as a linear combination of $x_{1,k}^-\on (\Omega'\otimes v_i)$, $1\leq k\leq r$, which completes the inductive step on $i$, and consequently also the inductive step on $r$. 

It was shown in \cite{CPsl2} that 
\be \det A = q^{\sum_{j=1}^{r-1} \mu_j} \prod_{j=1}^t b_j \prod_{k>j} \left(b_k-q^{-2\mu_j} b_j\right).\nn\ee
If $\det A= 0$ then there exist $j,k$ such that either
\begin{enumerate}
\item $1\leq j<k<r$ and $a_j=q^{\mu_j+\mu_k} a_k$, or else  
\item $1\leq j<k=r$ and $a_j=q^{\mu_j+\mu_r-2i}a_r$.
\end{enumerate}
The first of these is impossible since $S_{\mu_j}(a_j)$ and $S_{\mu_k}(a_k)$ are in general position. 
For the second, by making use of the freedom noted above to reorder the tensor factors, we may assume that $\dim V(\mu_r)_{a_r}\leq \dim(V(\mu_j)_{a_j})$ for all $1\leq j\leq r$. That is, if any of the tensor factors have finite dimension, then none have dimension lower than $V(\mu_r)_{a_r}$. 
Now if $\mu_r\in \Z_{\geq 0}$ then $i<\mu_r$ and so (2) is also ruled out since $S_{\mu_j}(a_j)$ and $S_{\mu_k}(a_k)$ are in general position.
\end{proof}

\begin{rem*}
The $q$-character of $V(\mu)_a \cong \L(S_\mu(a))$ is 
\be \chi_q( \L(S_\mu(a))) = S_\mu(a) \sum_{k=0}^{\dim \L(S_\mu(a))-1} \,\, \prod_{\ell=0}^{k-1} \bs A_{1,aq^{\mu-1+2\ell}}^{-1}.\nn\ee
\end{rem*}

\section{Least affinizations}\label{sec:minaff} 
In this section our main result is Theorem \ref{lathm}, which classifies the least affinizations in types ABCF and G. 
\subsection{Definition of least affinizations} It is natural to consider affinizations of the simple objects of $\cat$, in the sense of the following definition, which is adapted directly from the case of finite-dimensional representations discussed in \cite{Cminaffrank2}.
\begin{defn}
A simple module $\L(\bs f)\in\Ob\hat\cat$ is an \emph{affinization} of $V(\overline\mu)\in\Ob\cat$ if $\wt(\bs f)=\overline\mu$. Two affinizations of $V(\overline\mu)$ are \emph{equivalent} if they are isomorphic as $\uqg$-modules.
\end{defn} 
Our first observation is that $\L(\bs f)$ and $\L(\bs f^\dag)$, c.f. (\ref{dagdef}), are equivalent affinizations of $\wt(\bs f)$.
\begin{prop}\label{sameclass}
For all $\bs f\in\mc R$, $\L(\bs f)$ and $\L(\bs f^\dag)$ are isomorphic as $\uqg$-modules.
\end{prop}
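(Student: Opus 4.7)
The natural approach is to bootstrap from Corollary \ref{flipcor}, which already gives a $\uqgh$-module isomorphism $\L(\bs f^\dag)\cong(\L(\bs f)^*)^{\hat\cinv}$. Comparing \eqref{cinvdef} with \eqref{cinvhatdef} at $r=0$, the quantum-affine Cartan involution $\hat\cinv$ restricts on the Chevalley generators of $\uqg\subset\uqgh$ to the ordinary Cartan involution $\cinv$. Consequently, restricting the isomorphism of Corollary \ref{flipcor} to $\uqg$ gives
\[
\L(\bs f^\dag)\;\cong\;(\L(\bs f)^*)^{\cinv}
\]
as $\uqg$-modules, and it therefore suffices to produce a $\uqg$-isomorphism $(\L(\bs f)^*)^{\cinv}\cong\L(\bs f)$.

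The first step is a character match: the restricted dual $V\mapsto V^*$ inverts weights because $S(k_i)=k_i^{-1}$, and the Cartan twist $V\mapsto V^{\cinv}$ also inverts weights since $\cinv(k_i)=k_i^{-1}$, so the composite $V\mapsto(V^*)^{\cinv}$ is weight-preserving on $\cat$ and, in particular, preserves the $\uqg$-character. The second step is to identify a $\uqg$-highest weight vector on each side: on the right, the highest $\ell$-weight vector $v_0\in\L(\bs f)$ is a $\uqg$-highest weight vector of weight $\wt(\bs f)$; on the left, a direct calculation using $S(x_i^-)=-k_ix_i^-$, together with the fact that all weights of $\L(\bs f)$ lie in $\wt(\bs f)\cdot\overline{Q^+}^{-1}$, shows that the dual functional $v_0^*\in\L(\bs f)^*$ is annihilated by every $x_i^-$ and has weight $\wt(\bs f)^{-1}$, so after twisting by $\cinv$ it becomes a $\uqg$-highest weight vector of weight $\wt(\bs f)$.

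The final step, upgrading the matching highest weights and characters to a genuine $\uqg$-module isomorphism, is where I expect the main obstacle. If $\L(\bs f)$ is generated as a $\uqg$-module by $v_0$ (equivalently $(\L(\bs f)^*)^{\cinv}$ is generated by $v_0^*$), then the universal property of the $\uqg$-Verma module together with the character match forces the isomorphism. In general this generation property need not hold, but one can reduce to the generic situation by means of Proposition \ref{anacprop}: after embedding $\bs f$ into a rational family $\bs f(x)$, for all but finitely many $x$ the $\uqg$-restriction decomposes as a direct sum of irreducible $\uqg$-modules, each of which is fixed (up to isomorphism) by the duality $V\mapsto(V^*)^{\cinv}$, so the desired isomorphism holds on a Zariski-dense set of parameters; one then uses the finite-dimensionality and semicontinuity provided by Lemma \ref{dimdroplem} to propagate it to every $\bs f$ and, in particular, to recover an isomorphism of $\uqg$-modules for the original $\bs f$.
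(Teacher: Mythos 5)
Your first two steps reproduce the paper's proof almost verbatim: the paper also bootstraps from Corollary \ref{flipcor}, observes that $S(k_i)=k_i^{-1}=\cinv(k_i)$, deduces $\chi(V)=\chi((V^*)^{\cinv})$ for $V\in\Ob\cat$, and from this simply \emph{asserts} that $(V^*)^{\cinv}\cong V$ as $\uqg$-modules. So up to the character match and the identification of the highest weight vector $v_0^*$ (which is correct), you are on the paper's route, and you are right to sense that passing from equal characters to an actual $\uqg$-isomorphism is the real issue: the blanket claim $(V^*)^{\cinv}\cong V$ for all $V\in\Ob\cat$ is false (a reducible Verma module is a counterexample --- its contragredient has the irreducible top as socle rather than as head), so only the character identity is genuinely established by this argument.

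The repair you propose for the final step does not work, however. The assertion that for generic $x$ the $\uqg$-restriction of $\L(\bs f(x))$ is a direct sum of irreducibles is nowhere established and is not automatic --- objects of $\cat$ need not be semisimple. More fundamentally, Lemma \ref{dimdroplem} and Proposition \ref{anacprop} control only the \emph{dimensions} of weight spaces as the parameter varies; semicontinuity of dimensions cannot propagate a statement about module \emph{structure} from generic to special parameter values, which is exactly where degenerations invisible at the level of dimensions occur. Even your cyclic fallback is shaky: two cyclic highest weight $\uqg$-modules with the same highest weight and the same character are quotients of one Verma module by two submodules with equal characters, and in rank $\geq 2$ that does not force the submodules to coincide. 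The honest summary is that your proposal correctly proves $\chi(\L(\bs f))=\chi(\L(\bs f^\dag))$ by the same method as the paper --- and that is all that is used later, since equivalence of affinizations and the partial order depend only on weight multiplicities --- but neither your step three nor the paper's one-line assertion upgrades this to an isomorphism of $\uqg$-modules.
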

\begin{proof}
For any $V\in \Ob\cat$, $(V^*)^\cinv\cong V$ as $\uqg$-modules, where $(V^*)^\cinv$ is the pull-back via the Cartan involution $\cinv$ of $V^*$. This is clear since $S(k_i) = k_i^{-1}= \cinv(k_i)$, so $\chi(V) = \chi((V^*)^\cinv)$. 
Consequently, the result follows from Corollary \ref{flipcor}.
\end{proof}

Recall that an element $X$ of a partially ordered set $(P,\prec)$ is said to be \emph{minimal} if there is no $Y\in P$ such that $Y\prec X$, and is said to be \emph{least} if $Y\succeq X$ for all $Y\in P$. A partially ordered set has at most one least element. If a least element does exist then it is the unique minimal element.

For each $\mu \in \h^*$, there is a partial order on the equivalence classes of affinizations of $V(\overline\mu)$, defined as follows. Let $\L(\bs f),\L(\bs f')\in\Ob\hat \cat$ be two affinizations of $V(\overline\mu)$.  We say that the class of $\L(\bs f)$ weakly precedes that of $\L(\bs f')$ if and only if for all $\alpha\in Q^+$ either
\begin{enumerate}[(i)]
\item $\dim(\L(\bs f)_{\overline{\mu-\alpha}}) \leq \dim(\L(\bs f')_{\overline{\mu-\alpha}})$, or
\item there exists an $\beta\in Q^+$ such that $\beta\leq \alpha$ and 
$\dim(\L(\bs f)_{\overline{\mu-\beta}}) < \dim(\L(\bs f')_{\overline{\mu-\beta}})$.
\end{enumerate}
This partial order is given in terms of the dimensions of $\uqg$-weight spaces, but it could equivalently have been defined in terms of multiplicities of $\uqg$-module composition factors.

\begin{defn}
A \emph{minimal (resp. least) affinization} of $V(\overline\mu)\in \Ob\cat$ is an equivalence class of affinizations of $V(\overline\mu)$ which is minimal (resp. least) with respect to this partial order. 
By a slight overloading we say also that any representative of such a class is a minimal (resp. least) affinization.
\end{defn}

\begin{defn}
A \emph{Kirillov-Reshetikhin} module is any least affinization of $V(\overline{\mu\omega_k})$, $\mu\in \C$, $k\in I$.   
\end{defn}
\begin{prop}
The module $\L(\bs f)$ is Kirillov-Reshetikhin if and only if $f_j(u) = \delta_{jk} q_k^\mu\frac{1-q_k^{-\mu-1} au}{1-q_k^{\mu-1} au}$ for some $a\in \Cx$.
\end{prop}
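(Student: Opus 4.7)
The plan is to prove both directions by combining the classical classification of finite-dimensional Kirillov--Reshetikhin modules with the analytic-continuation tool of Proposition \ref{anacprop} and the classification Theorem \ref{lathm}.

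For the ``if'' direction, suppose $f_k(u) = q_k^\mu(1-q_k^{-\mu-1}au)/(1-q_k^{\mu-1}au)$ and $f_j(u) = 1$ for $j \neq k$. Reading off $\wt(\bs f) = (f_i(0))_{i \in I}$ gives $\overline{\mu\omega_k}$, so $\L(\bs f)$ is an affinization of $V(\overline{\mu\omega_k})$. When $\mu \in \Z_{\geq 0}$, this $\bs f$ matches, via the correspondence $f_k(u) = q_k^{\deg P_k}P_k(uq_k^{-2})/P_k(u)$, the Drinfeld polynomial $P_k(u) = \prod_{\ell=0}^{\mu-1}(1-auq_k^{\mu-1-2\ell})$ with $P_j = 1$ for $j \neq k$, so $\L(\bs f)$ is the standard finite-dimensional Kirillov--Reshetikhin module, which is known to be a least affinization (subsumed in Theorem \ref{lathm}). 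To extend to arbitrary $\mu \in \C$, I would apply Proposition \ref{anacprop} with $x = q_k^\mu$ as the analytic parameter: the normalized $\uqg$-character of the string module stabilizes to its generic value off a finite exceptional set, while the inequality supplied by that proposition forces any competing affinization (when itself analytically continued through $\mu$) to have character bounded below by the string character at generic $\mu$. Hence $\L(\bs f)$ remains a least affinization for all $\mu$.

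For the ``only if'' direction, assume $\L(\bs f)$ is a Kirillov--Reshetikhin module, i.e., a least affinization of some $V(\overline{\mu\omega_k})$. The weight constraint $\wt(\bs f) = \overline{\mu\omega_k}$ immediately gives $f_k(0) = q_k^\mu$ and $f_j(0) = 1$ for all $j \neq k$ (and the dual values at $\infty$ are then determined by $f_i(0)f_i(\infty) = 1$). I would then invoke Theorem \ref{lathm}, the classification of least affinizations in types ABCFG: specialized to a highest weight of the form $\mu\omega_k$, it pins down $\bs f$ as a single string at node $k$ of exactly the form asserted. The one-parameter freedom in $a \in \Cx$ reflects the fact that equivalence of affinizations is insensitive to the spectral parameter.

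The principal obstacle is the ``only if'' direction, which rests substantively on Theorem \ref{lathm}; bypassing that theorem would require a direct $q$-character argument using Proposition \ref{stepprop} and Corollary \ref{coneprop} to show that any non-string highest $\ell$-weight with $\wt(\bs f) = \overline{\mu\omega_k}$ necessarily generates additional $\ell$-weight spaces, and hence strictly larger $\uqg$-weight multiplicities, than the string module. In the ``if'' direction, the delicate step is transferring the ``least'' property from integer to general $\mu$ via analytic continuation, since the lower-semicontinuity supplied by Proposition \ref{anacprop} must be applied simultaneously to $\L(\bs f)$ and to each competitor affinization, and one must argue that competitors fit into rational families over $\mu$ in order to invoke the proposition.
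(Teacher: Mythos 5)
Your proposal takes a much longer route than the paper, and the ``if'' direction has a genuine gap. The paper's proof is a two-line argument that never leaves depth one in the weight lattice: by the cone property (Corollary \ref{coneprop}) together with the rank-one analysis of \S\ref{sl2sec} applied to $\hat U_{\{j\}}\on v$ for each node $j$, every non-highest weight of the proposed module is dominated by $\overline{\mu\omega_k-\alpha_k}$, and $\dim\L(\bs f)_{\overline{\mu\omega_k-\alpha_k}}=1$ precisely when $f_k$ is a string (it is $>1$ otherwise, by Theorem \ref{sl2thm}, and the weight spaces at $\overline{\mu\omega_k-\alpha_j}$, $j\neq k$, are nonzero whenever $f_j\neq 1$). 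This settles both directions at once, for arbitrary $\g$, directly from the definition of the partial order. Your ``only if'' argument instead leans on Theorem \ref{lathm}; that is not circular (the proof of Theorem \ref{lathm} does not use this proposition), but it applies only in types ABCFG with a straight labelling, whereas the proposition sits in \S 5.1 before that restriction is imposed and is meant to hold for all $\g$. Note also that $\wt(\bs f)=\overline{\mu\omega_k}$ gives only $f_j(0)=1$ for $j\neq k$, not $f_j\equiv 1$; ruling out nontrivial $f_j$ with $f_j(0)f_j(\8)=1$ is exactly the content of Corollary \ref{sl2mincor} (restriction to $\hat U_{\{j\}}$), which is the mechanism you would need if you wanted to avoid Theorem \ref{lathm}.

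The more serious problem is the ``if'' direction. Leastness is a statement about \emph{all} competing affinizations at the given $\mu$, and Proposition \ref{anacprop} controls only how the weight multiplicities of a \emph{single} rational family $\bs f(x)$ vary; it gives no way to compare $\L(\bs f)$ at a non-integer $\mu$ against the full set of rational $\ell$-weights $\bs g$ with $\wt(\bs g)=\overline{\mu\omega_k}$, which is not parametrized by deformations of the integer-$\mu$ competitors. Moreover, even at $\mu\in\Z_{\geq 0}$ the classical statement you are importing is leastness among \emph{finite-dimensional} affinizations, while here the competitors include infinite-dimensional objects of $\hat\cat$ (for instance any $\bs g$ with $g_j$ a nonconstant rational function satisfying $g_j(0)=1$ for some $j\neq k$), so the base case of your continuation argument is not actually established by the classical results. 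Both issues evaporate if you argue as the paper does: the string module achieves the absolute minimum ($1$ and $0$ respectively) for the dimensions of the weight spaces at $\overline{\mu\omega_k-\alpha_k}$ and at $\overline{\mu\omega_k-\alpha_j}$, $j\neq k$, and every other weight of the string module lies below $\overline{\mu\omega_k-\alpha_k}$, so clause (ii) of the partial order is triggered by any competitor that fails to match it at depth one.
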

\begin{proof}
If $\bs f$ is of this form, then all non-highest weights of $\L(\bs f)$ are dominated by $\overline{\mu\omega_k-\alpha_k}$. If $f_k(u)$ is a string then $\dim(L)_{V(\overline{\mu\omega_k-\alpha_k})}=1$; otherwise $\dim(L)_{V(\overline{\mu\omega_k-\alpha_k})}>1$, c.f. Theorem \ref{sl2thm}. \end{proof}

Note that the least affinization of the irreducible $\uqslt$-module $V(\overline{\mu\omega_1})$, $\mu\in\C$, is an evaluation module, $V(\mu)_a$ (in the notation of \S\ref{sl2sec}).

\bigskip

In view of Proposition \ref{anacprop}, minimal (resp. least) affinizations with generic highest weights are limits (or rather analytic continuations) of minimal (resp. least) affinizations of finite-dimensional modules. Namely, let $\lambda = \sum_{i\in I}\lambda_i\omega_i$. Let $I = J \sqcup K$. Fix $\lambda_j$, $j\in J$, to be equal to given nonnegative integers. Let \be\nn\widetilde{\chi}(\lambda) := \overline\lambda^{-1} \chi(\L(\bs f))\ee be the normalized character of a least affinization $\L(\bs f)$ of $V(\overline\lambda)$. 
\begin{cor}\label{accor}
There exists a limit 
\be\nn \lim_{\substack{\lambda_k\to \8\\ k\in K}} \widetilde{\chi}(\lambda),\ee
where $\lambda_k$ run over $\N$, and this limit is equal to $\widetilde{\chi}(\lambda)$ with generic $\lambda_k\in \C$, $k\in K$. 
\end{cor}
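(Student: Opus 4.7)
The plan is to deduce Corollary \ref{accor} from Proposition \ref{anacprop} combined with the explicit description of least affinizations provided by Theorem \ref{lathm}. First, by Theorem \ref{lathm} the highest $\ell$-weight $\bs f = \bs f(\lambda)$ of the least affinization of $V(\overline\lambda)$ is an explicit product of strings whose data depends on $\lambda$ only via the quantities $x_k := q^{\lambda_k}$, $k\in K$ (the $\lambda_j$ for $j\in J$ being fixed nonnegative integers). Each component $f_i(u)$ is visibly rational in each $x_k$, so one may regard the family $\bs f = \bs f(x)$ as a multi-parameter rational family over $(\Cx)^{|K|}$.

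Next, fix $\alpha \in Q^+$; we must show that $\dim L(\bs f(\lambda))_{\overline{\lambda-\alpha}}$ stabilizes as $\lambda_k\to\8$ through $\N$, $k\in K$, at the value it takes for generic $\lambda_k\in\C$. Applying Proposition \ref{anacprop} iteratively, one $x_k$ at a time (the underlying Lemma \ref{dimdroplem} extends routinely to several variables, since the argument is by induction on $\hgt(\alpha)$), one obtains a proper Zariski-closed subset $S_\alpha \subset (\Cx)^{|K|}$ and an integer $d_\alpha \geq 0$ (the generic dimension) such that
\begin{equation*}
\dim L(\bs f(x))_{\wt(\bs f)\overline\alpha^{-1}} \;=\; d_\alpha \quad\text{for } x \notin S_\alpha, \qquad \leq d_\alpha \text{ otherwise}.
\end{equation*}
Here $d_\alpha$ coincides with the coefficient of $\overline\alpha^{-1}$ in the normalized generic character $\widetilde\chi(\lambda)_{\mathrm{gen}}$.

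The task is then to verify that, for all $\lambda_k\in\N$ sufficiently large, the point $(q^{\lambda_k})_{k\in K}$ lies outside $S_\alpha$. Using the explicit string form of $\bs f(\lambda)$ from Theorem \ref{lathm}, each irreducible component of $S_\alpha$ corresponds to a specific resonance condition among finitely many of the strings, and translates into an equation of the shape $\prod_{k\in K} q^{m_k \lambda_k} = q^N$ for some integer vector $(m_k)_{k\in K}$ and integer $N$, with data determined by the fixed $\lambda_j$ ($j\in J$) and the combinatorics of $\alpha$. Since $q$ is transcendental, such an equation is equivalent to the affine-linear condition $\sum_k m_k \lambda_k = N$; and the explicit form of Theorem \ref{lathm} shows that the relevant $(m_k,N)$ always force an upper bound on at least one of the $\lambda_k$. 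Hence there is a threshold $M_\alpha$ such that $(q^{\lambda_k})_{k\in K} \notin S_\alpha$ whenever $\lambda_k \geq M_\alpha$ for every $k\in K$, so $\dim L(\bs f(\lambda))_{\overline{\lambda-\alpha}} = d_\alpha$ beyond the threshold. Taking the coefficient-wise limit yields $\lim \widetilde\chi(\lambda) = \widetilde\chi(\lambda)_{\mathrm{gen}}$, as claimed.

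The main obstacle is the last step: while the existence of the Zariski-closed exceptional locus $S_\alpha$ is automatic, showing that it is avoided by \emph{every} sequence of integer tuples with all components tending to infinity requires the resonance-exclusion input from Theorem \ref{lathm} — without it, one could in principle have persistent resonances along ``diagonal'' subvarieties like $\lambda_i-\lambda_j=\mathrm{const}$. Once this structural input is in hand, the rest of the argument is routine bookkeeping with $q$-characters.
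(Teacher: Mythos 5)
Your overall route is the same as the paper's: the printed proof of Corollary \ref{accor} consists of the single sentence ``This follows from Proposition \ref{anacprop}'', and you likewise make Proposition \ref{anacprop} (together with the explicit string form of the highest $\ell$-weight from Theorem \ref{lathm}) the engine of the argument. You have, moreover, correctly put your finger on the point that the paper elides: Proposition \ref{anacprop} and Lemma \ref{dimdroplem} are one-variable statements with a \emph{finite} exceptional set, whereas the corollary asserts a joint limit over several integer parameters, and a multi-variable exceptional locus is only Zariski-closed, so it could a priori contain ``diagonal'' subvarieties met by infinitely many integer tuples with all coordinates large. Identifying that this is the only obstruction, and that the inequality in Proposition \ref{anacprop} (generic dimension $\geq$ special dimension) gives the upper bound for free, is genuinely more than the paper writes down.

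The gap is in your final step. You assert that ``the explicit form of Theorem \ref{lathm} shows that the relevant $(m_k,N)$ always force an upper bound on at least one of the $\lambda_k$'', but Theorem \ref{lathm} only pins down the highest $\ell$-weight $\bs f(\lambda)$ of the least affinization; it contains no information about \emph{which} affine-linear conditions on $(\lambda_k)_{k\in K}$ cause the multiplicity $\dim \L(\bs f(\lambda))_{\overline{\lambda-\alpha}}$ to drop for a given $\alpha\in Q^+$ of arbitrary height. The computations in \S\ref{twonodesec}--\ref{threenodesec} describe such resonance conditions only for weights of depth at most $N$ in the sense of $\chi_q|_{\leq N}$, and the full weight multiplicities of $\L(\bs f)$ are exactly what the paper leaves conjectural in \S6. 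So the resonance-exclusion claim — that every component of $S_\alpha$ bounds some $\lambda_k$ from above — is the entire mathematical content of the corollary beyond Proposition \ref{anacprop}, and as written you assert it rather than prove it. (To be fair, the paper's own one-line proof supplies no more; but if you want a complete argument you need either to prove this exclusion, e.g.\ by an induction on $\hgt(\alpha)$ tracking where the kernels in Lemma \ref{dimdroplem} can jump, or to replace the joint limit by an argument that genuinely reduces to one complex parameter.) A minor additional remark: you invoke Theorem \ref{lathm}, which in the paper is stated and proved after Corollary \ref{accor}; there is no circularity, since the proof of Theorem \ref{lathm} does not use the corollary, but the logical order should be flagged.
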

\begin{proof} This follows from Proposition \ref{anacprop}.\end{proof}

Note that in particular there exists a limit of the normalized characters of Kirillov-Reshetikhin modules and it is equal to the normalized character of the Kirillov-Reshetikhin module with generic nontrivial component. Compare \cite{HJ}. 

In fact, in a similar way, there exists an analytic continuation of $\chi_q(\L(\bs f))$ with $\lambda_k\in \Z_{\geq 0}$, $k\in K$, 
which is equal to $\chi_q(\L(\bs f))$ with generic $\lambda_k\in \C$, $k\in K$.

\subsection{Classification of minimal affinizations in types $\mathrm{ABCFG}$}\label{ssthm}
For the remainder of this section we suppose $\g$ is of type $\mathrm{ABCF}$ or $\mathrm G$. 
We pick a straight labelling of the Dynkin diagram in which $B_{ij}\neq 0$ only if $|i-j|\leq 1$. 

In these cases the following theorem, which is the main result of \S\ref{sec:minaff}, shows that every simple object $V\in \Ob\cat$ has a least affinization. 
\begin{thm}\label{lathm}
Given $\lambda\in \h^*\setminus\{0\}$, an affinization $\L(\bs f)$ of $V(\overline\lambda)$ is least if and only if there is an $c\in \Cx$ and an $\eps\in \{+1,-1\}$ such that, for each $i\in I$,
\be f_i(u) = q^{\eps \lambda_i B_{ii}/2} \frac{1- c_i q^{-\eps \lambda_i B_{ii}} u}{1-c_i u},\nn \ee
where $c_1=c$ and $c_{i+1}q^{-\eps\lambda_{i+1} B_{i+1,i+1}} = c_iq^{-\eps B_{i,i+1}}$ for each $1\leq i< \rank(\g)$.
\end{thm}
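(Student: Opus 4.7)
The plan is to combine the Chari--Pressley classification of minimal affinizations of finite-dimensional modules in rank 2 with the analytic continuation machinery of \S\ref{acsec}, bridging the two via restriction to consecutive pairs of nodes.

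First, I would prove the ``if'' direction. For $\lambda$ with all $\lambda_i \in \Z_{\geq 0}$, the proposed $\bs f$ matches the highest $\ell$-weight of a finite-dimensional least affinization, as classified in \cite{Cminaffrank2, CPminaffBCFG}. For $\lambda$ with components in $\C$, Corollary \ref{accor} promotes this: the normalized character $\widetilde\chi(\L(\bs f))$ is the analytic continuation (in the parameters $\lambda_i$) of its integer-weight counterparts, and hence realizes the same minimal weight-space dimensions. Proposition \ref{sameclass} together with the involution $\hat\cinv$ exchanges the $\eps = +1$ and $\eps = -1$ cases, showing both families yield equivalent affinizations.

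For the ``only if'' direction, the key step is reduction to rank 2. Let $\L(\bs f)$ be a least affinization of $V(\overline\lambda)$, and for each pair of consecutive nodes $(i,i+1)$ consider the $\uqh{\g_{\{i,i+1\}}}$-submodule $W_{i,i+1}$ generated by the highest $\ell$-weight vector. Its highest $\ell$-weight is the projection $(f_i,f_{i+1})$ of $\bs f$. Any affinization of $V(\overline{\lambda_i\omega_i+\lambda_{i+1}\omega_{i+1}})$ strictly smaller than $W_{i,i+1}$ would, by replacing $(f_i,f_{i+1})$ in $\bs f$ while leaving the other components untouched, produce an affinization of $V(\overline\lambda)$ strictly smaller than $\L(\bs f)$, contradicting the assumption. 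Hence $W_{i,i+1}$ must itself be a least affinization in rank 2. The rank-2 classification for types $A_2, B_2, C_2, G_2$ from \cite{Cminaffrank2, CPminaffBCFG}, extended to arbitrary complex $\lambda_i, \lambda_{i+1}$ via Proposition \ref{anacprop}, then yields the local linking relation $c_{i+1}q^{-\eps_i\lambda_{i+1}B_{i+1,i+1}} = c_i q^{-\eps_i B_{i,i+1}}$ with a sign $\eps_i \in \{\pm 1\}$ attached to each consecutive pair. Consistency $\eps_i = \eps$ for all $i$ is then forced by propagating the sign along the linear Dynkin diagram: having $\eps_i \neq \eps_{i+1}$ would violate the rank-2 condition at node $i+1$ when viewed from either side.

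The main obstacle is the coincidence of ``minimal'' and ``least'' asserted by the theorem. A priori one must rule out incomparable minimal affinizations. The essential input is the linear shape of the Dynkin diagrams in types ABCFG (no trivalent node), which means the linking conditions propagate along a single chain and leave no combinatorial freedom for incomparable choices. To conclude that the stated $\bs f$ is actually \emph{least}, and not only minimal, I would invoke a $q$-character argument: the Frenkel--Mukhin algorithm from \cite{FM}, applied to the proposed $\L(\bs f)$, produces a $q$-character whose projection under $\wt$ is componentwise $\leq$ the $\uqg$-character of any other affinization, using Corollary \ref{coneprop} to confine all $\ell$-weights to the cone $\bs f \Z[\bs A_{i,a}^{-1}]$. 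This comparison, already delicate in the finite-dimensional case, extends to $\hat\cat$ by analytic continuation of characters (Proposition \ref{anacprop}) from the integer-weight setting where it is known.
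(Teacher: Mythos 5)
There is a genuine gap at the heart of your ``only if'' direction: the reduction to consecutive \emph{pairs} of support nodes is not sufficient, and your claim that the signs $\eps_i$ are forced to agree ``by propagating along the chain'' is false as stated. The rank-2 condition at each consecutive pair of support nodes (the analogue of Corollary \ref{twonc}) allows \emph{either} of two orientations of the linking relation, independently at each pair; choosing orientation (I) at the pair $(i_k,i_{k+1})$ and orientation (II) at $(i_{k+1},i_{k+2})$ violates no rank-2 condition whatsoever, yet produces an affinization that is not least. This is exactly why the paper carries out the three-node computation (Proposition \ref{threenodes} and Corollary \ref{threenc}): the mixed cases (III) and (IV) there satisfy both two-node conditions but acquire an extra dimension in the weight space at $\lambda-\alpha_{i_k}-\cdots-\alpha_{i_{k+2}}$, coming from the term $\delta_{a_j,bq^{-\kappa B_{jj}}}\delta_{c_j,bq^{-\kappa B_{jj}}}\,\bs f_{j,N-j}$ in $\chi_q(\L(\bs f))|_{\leq N}$, and this is what rules them out. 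Accordingly, the paper's Proposition \ref{threenodeprop} reduces the general case to restrictions on consecutive pairs \emph{and} consecutive triples of support nodes; without the triple condition your argument only characterizes a strictly larger family of candidates.

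Two further points. First, your argument that the resulting $\bs f$ is \emph{least} rather than merely minimal is not a proof: the Frenkel--Mukhin algorithm does not come with a guarantee of termwise domination of the character of an arbitrary competing affinization, and Corollary \ref{coneprop} only confines $\ell$-weights to a cone, it does not compare multiplicities. The paper's actual argument is concrete: for any competitor $\L(\bs s)$, either all pair/triple restrictions are least (whence $\L(\bs s)\cong\L(\bs f)$ as $\uqg$-modules), or some restriction fails, in which case Propositions \ref{twonodes}--\ref{threenodes} exhibit a specific weight space where $\L(\bs s)$ is strictly bigger, and Lemma \ref{factorlem} shows that every weight not dominated by one of these distinguishing weights factors through commuting subdiagrams on which the two modules agree. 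Second, your ``if'' direction via analytic continuation is shaky: Proposition \ref{anacprop} controls how a \emph{fixed rational family} $\bs f(x)$ degenerates, but a competing affinization of a generic (non-integral) highest weight need not arise as the continuation of any finite-dimensional module, so minimality over the finite-dimensional competitors does not transfer. The paper avoids this entirely by computing the truncated $q$-characters $\chi_q(\L(\bs f))|_M$ directly in $\hat\cat$ via the expansion lemma (Lemma \ref{rk1lem}), uniformly in the complex parameters.
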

The rest of \S5 is devoted to proving this theorem. After some preliminary lemmas in \S\ref{prelem}, in \S\ref{twonodesec} we treat the case in which $\lambda$ has support at the two end nodes of the Dynkin diagram; in \S\ref{threenodesec} we treat the case in which $\lambda$ has support at the two end nodes and one other node. Finally, the theorem is proved in \S\ref{gencase}. 

Let $\tau_a:\uqgh \to \uqgh$, $a\in \Cx$, be the automorphism defined by its action on generators according to \be \tau_a(x_{i,r}^\pm) = a^{\pm r} x_{i,r}^\pm,\qquad \tau_a(\phi^\pm_{i,\pm m}) = a^{\pm r} \phi^\pm_{i,\pm m}, \qquad \tau_{a} (k_i^{\pm 1}) = k_i^{\pm 1};\label{taudef}\nn\ee
then we have
\be \tau_a^*(\L(\bs f)) \cong \L( t_a(\bs f) ), \ee
where $t_a:\mc R\to \mc R$ is defined by $(t_a f_i)(u) := f_i(au)$.

It follows from Theorem \ref{lathm} that if $\L(\bs f)$ and $\L(\bs f')$ are least affinizations of $V(\overline\lambda)$ then there exists an $a\in \Cx$ such that either $\bs f = t_a(\bs f')$ or $\bs f =  t_a(\bs f'^\dag)$, c.f. (\ref{dagdef}).
 
Our strategy for identifying least affinizations relies on computing ``the top'' part of the $q$-character, in the following sense. For each $0\leq M \leq N=\rank(\g)$, define 
\be W_M := \left\{ - \sum_{k=1}^M \al_{i_k}: j\neq k \implies i_j\neq i_k\right\}.\nn\ee
We write $\chi_q(\L(\bs f))|_M$ for the $q$-character of $\L(\bs f)$ truncated to include only those terms whose weights lie in $\wt(\bs f) \overline{W_M}$. 
In certain cases, we shall compute $\chi_q(\L(\bs f))|_{M}$ for each $0\leq M \leq N$. 
That is, informally speaking, we shall consider all weights that can be reached from the highest weight by lowering at most once in each simple direction. 
As it turns out, this is sufficient to distinguish least affinizations from others.

\subsection{Preliminary lemmas}\label{prelem}

Let $\hat U_J$ be the subalgebra generated by $(x_{i,r}^+)_{i\in J, r\in \Z}$,  $(x_{i,r}^-)_{i\in J, r\in \Z}$, $(h_{i,r})_{i\in J, r\in \Z\setminus \{0\}}$ and $(k^{\pm 1}_i)_{i\in J}$ subject to the relations (\ref{hxpm}). 
Given a rational $\ell$-weight $\bs f=(f_{i}(u))_{i\in I}$, we write $\bs f_J$ for the $\ell$-weight $(f_{i}(u))_{i\in J}$ of $\hat U_J$, and $\L(\bs f_{J})$ for the irreducible $\hat U_J$-module with highest $\ell$-weight $\bs f_{J}$. 


Similarly, let $U_J$ be the subalgebra generated by $(x_{i,0}^+)_{i\in J}$, $(x_{i,0}^-)_{i\in J}$ and $(k^{\pm 1}_i)_{i \in J}$. Given a weight $\rho\in \overline{\h^*}$ of $\uqg$ we write $\rho_J$ be for weight $(\rho_i)_{i\in J}$ of $U_J$, and $V(\rho_J)$ for the irreducible $U_J$-module with highest weight $\rho_J$. 

\begin{lem}\label{factorlem}
Let $I_1,I_2,\dots,I_k$ be subdiagrams of $I$ such that the corresponding diagram subalgebras $\g_{I_1},\g_{I_2},\dots, \g_{I_k}$ of $\g$ are simple and pairwise commuting. Let $L(\bs f)\in \Ob\hat\cat$ with highest $\ell$-weight vector $v$.  Then
\be \left( \hat U_{I_1}  \oplus \dots \oplus  \hat U_{I_k} \right)\on v \cong \L(\bs f_{I_1}) \otimes \dots \otimes \L(\bs f_{I_k}). \nn\ee
\end{lem}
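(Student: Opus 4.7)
The plan is to reduce to the single-subdiagram case $J = \bigsqcup_j I_j$: first show that $\hat U_J\cdot v\cong\L(\bs f_J)$, then recognise $\L(\bs f_J)$ as an external tensor product over the commuting factors $\hat U_{I_j}$. For the reduction, since $\g_{I_a}$ and $\g_{I_b}$ commute in $\g$ for $a\neq b$, no node of $I_a$ is adjacent to any node of $I_b$, so $B_{ij}=0$ for all $i\in I_a$, $j\in I_b$; the Drinfeld relations (\ref{hxpm}) and the corresponding Serre-type relations then degenerate to $[\hat U_{I_a}, \hat U_{I_b}]=0$. Together with the PBW theorem for $\uqgh$, this yields an algebra isomorphism $\bigotimes_j \hat U_{I_j}\xrightarrow{\,\sim\,}\hat U_J$ via multiplication, so the module $(\hat U_{I_1}\oplus\cdots\oplus \hat U_{I_k})\cdot v$ appearing in the lemma coincides with $\hat U_J\cdot v$.

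The heart of the proof is to show, for an arbitrary connected subdiagram $J\subseteq I$, that the cyclic submodule $\hat U_J\cdot v\subseteq \L(\bs f)$ is irreducible as a $\hat U_J$-module, hence isomorphic to $\L(\bs f_J)$. By (\ref{uqghtriang}) applied to $\hat U_J$ and the highest-$\ell$-weight condition, $\hat U_J\cdot v = \hat U^-_J\cdot v$ is a highest-$\ell$-weight $\hat U_J$-module of highest $\ell$-weight $\bs f_J$. Suppose $N\subsetneq \hat U_J\cdot v$ is a nonzero proper $\hat U_J$-submodule. Picking a nonzero weight vector $w\in N$ whose weight $\wt(v)-\mu$ (with $\mu$ a nonzero nonnegative sum of simple roots from $J$) has minimal height produces an $\hat U_J$-singular vector: $x^+_{i,r}\cdot w=0$ for all $i\in J$ and $r\in\Z$, because $x^+_{i,r}\cdot w$ would have weight either outside the cone of weights of $\hat U_J\cdot v$ or of strictly smaller height than $\mu$ in $N$.

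The crux of the argument is to upgrade $w$ to a $\uqgh$-singular vector. For this, write $w = P\cdot v$, where $P$ is a noncommutative polynomial in the generators $x^-_{j,s}$, $j\in J$, $s\in\Z$. For any $i\notin J$, the Drinfeld relation $[x^+_{i,r},x^-_{j,s}]=\delta_{ij}(\cdots)=0$ implies that $x^+_{i,r}$ commutes with every such $x^-_{j,s}$, and hence with $P$. Therefore $x^+_{i,r}\cdot w = P\cdot x^+_{i,r}\cdot v = 0$, so $w$ is $\uqgh$-singular of weight strictly below $\wt(v)$, contradicting the simplicity of $\L(\bs f)$. Applying the resulting irreducibility to $J = \bigsqcup_j I_j$ gives $\hat U_J\cdot v\cong \L(\bs f_J)$; since $\bigotimes_j \L(\bs f_{I_j})$ is also a simple module over $\bigotimes_j \hat U_{I_j}\cong \hat U_J$ with highest $\ell$-weight $\bs f_J$, Theorem \ref{thm1} identifies the two.
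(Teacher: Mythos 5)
Your proposal is correct and takes essentially the same route as the paper: the key point in both is that a $\hat U_J$-singular vector in $\hat U_J\cdot v$ is automatically $\uqgh$-singular and hence proportional to $v$ by simplicity of $\L(\bs f)$, after which the tensor factorization follows from the mutual commutativity of the $\hat U_{I_j}$. The only cosmetic difference is that the paper dismisses $x^+_{i,r}\cdot w$ for $i\notin J$ ``on weight grounds'' while you use the vanishing of $[x^+_{i,r},x^-_{j,s}]$ for $i\neq j$; these are interchangeable.
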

\begin{proof}
Let $k=1$. Suppose $w\in \hat U_{I_1} \on v$ is a singular vector with respect to $\hat U_{I_1}$. Then, on weight grounds, $w$ is a singular vector with respect to $\uqgh$. Therefore, since $\L(\bs f)$ is irreducible, $w$ is proportional to $v$. Hence $\hat U_{I_1}\on v$ is irreducible.  

For general $k$ the lemma follows by the mutual commutativity of the $\g_{I_k}$. 
\end{proof}

\begin{lem}[The restriction lemma]\label{reslem} 
Let $J\subseteq I$. An affinization $\L(\bs f)\in\Ob\hat\cat$ of $V(\overline\mu)$ 
is least only if the simple $\hat U_J$-module through a highest $\ell$-weight vector $v$ of $\L(\bs f)$ is a least affinization of $V(\overline\mu_J)$. 
\end{lem}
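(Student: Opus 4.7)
The plan is to prove this by contrapositive. Assume $\L(\bs f)$ is least; I will show that the $\hat U_J$-submodule $M := \hat U_J.v$ is itself a least affinization of $V(\overline{\mu_J})$.

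First, I would verify that $M$ is simple and isomorphic to $\L(\bs f_J)$. Since $v$ is $\hat U$-singular it is a fortiori $\hat U_J$-singular, with $\hat U_J$-$\ell$-weight $\bs f_J$, so $M$ is a highest $\ell$-weight $\hat U_J$-module of highest $\ell$-weight $\bs f_J$. Any $\hat U_J$-singular vector $w \in M$ is automatically $\hat U$-singular: writing the weight of $w$ as $\overline{\mu-\eta}$ with $\eta \in Q^+_J := \bigoplus_{j\in J}\Z_{\geq 0}\alpha_j$, the vector $x_{i,r}^+.w$ (for $i \notin J$) would have $\uqg$-weight $\overline{\mu + \alpha_i - \eta}$, which is not $\leq \overline\mu$ since $\alpha_i$ has support outside $J$ while $\eta$ does not; hence $x_{i,r}^+.w = 0$. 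Simplicity of $\L(\bs f)$ then forces $w \in \C v$, so $M$ is simple and $M \cong \L(\bs f_J)$ by Theorem \ref{thm1}. The central identification is then
\be\nn \L(\bs f)_{\overline{\mu-\eta}} \;=\; M_{\overline{\mu-\eta}} \;\cong\; \L(\bs f_J)_{\overline{\mu_J-\eta}}, \qquad \eta \in Q^+_J; \ee
it follows from the triangular decomposition (\ref{uqghtriang}), because $\L(\bs f) = \hat U^-.v$ and the weight-$(-\eta)$ subspace of $\hat U^-$ is spanned by monomials in the $x_{j,r}^-$ whose simple-root labels sum to $\eta$, which by linear independence of the simple roots lie entirely in $\hat U_J^-$.

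To conclude, suppose for contradiction that $\L(\bs f_J)$ is not least: there exist an affinization $\L(\bs h_J)$ of $V(\overline{\mu_J})$ and some $\alpha \in Q^+_J$ witnessing $\dim \L(\bs f_J)_{\overline{\mu_J-\alpha}} > \dim \L(\bs h_J)_{\overline{\mu_J-\alpha}}$ while $\dim \L(\bs f_J)_{\overline{\mu_J-\beta}} \geq \dim \L(\bs h_J)_{\overline{\mu_J-\beta}}$ for all $\beta \in Q^+_J$ with $\beta \leq \alpha$. Define $\bs h \in \mc R$ by $h_i = (\bs h_J)_i$ for $i \in J$ and $h_i = f_i$ for $i \notin J$, so that $\L(\bs h)$ is an affinization of $V(\overline\mu)$. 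Since any $\beta \in Q^+$ with $\beta \leq \alpha$ automatically lies in $Q^+_J$ (the root $\alpha$ has no support outside $J$), the central identification applied to both $\bs f$ and $\bs h$ transfers these inequalities verbatim, giving $\dim \L(\bs f)_{\overline{\mu-\alpha}} > \dim \L(\bs h)_{\overline{\mu-\alpha}}$ and $\dim \L(\bs f)_{\overline{\mu-\beta}} \geq \dim \L(\bs h)_{\overline{\mu-\beta}}$ for every such $\beta$. Hence $\L(\bs f)$ does not weakly precede $\L(\bs h)$, contradicting leastness of $\L(\bs f)$.

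The argument is purely structural; I anticipate no serious obstacle. The one step that genuinely uses the ambient algebra is the central identification of $Q^+_J$-weight subspaces, and this reduces cleanly to the triangular decomposition.
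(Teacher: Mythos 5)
Your proof is correct and follows essentially the same route as the paper's: assume the restricted module is not least, splice the better $J$-affinization $\bs h_J$ into $\bs f$ outside of $J$, and transfer the failure of weak precedence back to the full modules. The details you supply explicitly (simplicity of $\hat U_J\on v$ and the identification $\L(\bs f)_{\overline{\mu-\eta}}=(\hat U_J\on v)_{\overline{\mu-\eta}}$ for $\eta$ supported on $J$) are exactly what the paper delegates to Lemma \ref{factorlem} and leaves implicit, so this is a fleshed-out version of the same argument rather than a different one.
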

\begin{proof}
Suppose there is a $J\subseteq I$ such that $\hat U_J\on v$ is not least.
Then there is another affinization of $V(\overline \mu_J)$, say $\L(\bs s_J)$, whose equivalence class does not weakly succeed that of $\L(\bs f_J)$. It follows that the class of $\L(\bs g)$ does not weakly succeed that of $\L(\bs f)$, where the rational $\ell$-weight $\bs g= (g_i(u))_{i\in I, r\in \Z_{\geq 0}}$ of $\uqgh$ is given by 
\be \nn g_i(u) := \begin{cases} s_i(u) & \text{if } i \in J, \\ f_i(u) & \text{if } i\in I \setminus J. \end{cases}\ee  
\end{proof}
\begin{cor}\label{sl2mincor}
A $\uqgh$-module $\L(\bs f)\in\Ob\hat\cat$ is a least affinization only if $f_i(u)$ is a string -- c.f. (\ref{Sdef}) -- for each $i\in I$. 
\end{cor}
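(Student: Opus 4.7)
The plan is to reduce to the $\mathfrak{sl}_2$ case via the restriction lemma and then invoke the classification of simples in $\hat\cat$ for $\mathfrak{sl}_2$ from \S\ref{sl2sec}. The first step is to apply Lemma \ref{reslem} with $J = \{i\}$ for each $i \in I$ separately: this reduces the problem to showing that, for $\g = \mathfrak{sl}_2$, any least affinization $\L(\bs f)$ of $V(\overline{\mu\omega_1})$ has $f(u)$ equal to a single string $S_\nu(a)$ (possibly the trivial string $S_0(a) = 1$).

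For the $\mathfrak{sl}_2$ case, I would write $f(u) = \prod_{k=1}^r S_{\mu_k}(a_k)$ with the strings pairwise in general position and each $\mu_k \neq 0$, using the factorization proposition just before Theorem \ref{sl2thm}. Theorem \ref{sl2thm} then identifies $\L(\bs f) \cong V(\mu_1)_{a_1} \otimes \cdots \otimes V(\mu_r)_{a_r}$ as $\uqslth$-modules, with $\mu = \sum_k \mu_k$. The goal is to show $r \leq 1$: the case $r = 0$ gives $f = 1$ and the case $r = 1$ gives $f = S_\mu(a_1)$, each of which is a single string.

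To rule out $r \geq 2$, I would compute the dimension of the $\uqslt$-weight space of $\L(\bs f)$ at $\overline{(\mu-2)\omega_1} = \overline{\mu\omega_1}\cdot\overline{\alpha_1}^{-1}$: using Proposition \ref{sl2basis} on each tensor factor, this space is spanned by the vectors $v_0 \otimes \cdots \otimes v_1 \otimes \cdots \otimes v_0$ with a single $v_1$ in the $k$-th slot ($k = 1,\dots,r$), so its dimension is exactly $r$. For comparison I would take any single-string affinization $\L(S_\mu(c))$ of $V(\overline{\mu\omega_1})$ with $c \in \Cx$: as a $\uqslt$-module it is the simple highest weight module $V(\overline{\mu\omega_1})$, whose weight space at $\overline{(\mu-2)\omega_1}$ has dimension at most $1$ (dimension $0$ when $\mu = 0$, in which case the natural comparison is with the trivial affinization $\L(1) = \mathbb C$). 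Taking $\alpha = \alpha_1 \in Q^+$, one then has $\dim \L(\bs f)_{\overline{\mu\omega_1 - \alpha_1}} > \dim \L(S_\mu(c))_{\overline{\mu\omega_1 - \alpha_1}}$, while at the only strictly smaller $\beta = 0 \in Q^+$ both dimensions equal $1$. Neither alternative (i) nor (ii) in the definition of weak precedence is satisfied, so $\L(\bs f)$ fails to weakly precede $\L(S_\mu(c))$, contradicting leastness.

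The only step that requires any real computation is the weight-space count at $\overline{(\mu-2)\omega_1}$, and this is immediate from the explicit action given in Proposition \ref{sl2basis}; every other ingredient is a formal consequence of Lemma \ref{reslem} and the rank-one classification in \S\ref{sl2sec}. I do not anticipate any substantial obstacle.
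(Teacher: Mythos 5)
Your proposal is correct and follows essentially the same route as the paper: restrict via Lemma \ref{reslem} with $J=\{i\}$, then observe that in the rank-one case the weight space at $\wt(\bs f)\overline{\alpha_i}^{-1}$ has dimension $1$ when $f_i(u)$ is a string and dimension $\geq 2$ (your exact count $r$) otherwise, so a multi-string factor cannot be least. The paper's proof simply cites ``the results of \S4'' for this dichotomy; you have spelled out that deferral explicitly via the general-position factorization and Theorem \ref{sl2thm}, which is a faithful expansion rather than a different argument.
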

\begin{proof}
We use the restriction lemma with $J=\{i\}$. Let $v$ be a highest $\ell$-weight vector of $\L(\bs f)$. It follows from the results of \S\ref{sl2sec} that $\left(\hat U_{\{i\}}\on v\right)_{\wt(\bs f)\overline{\alpha_i}^{-1}}$ has dimension $1$ if $f_i(u)$ is a string and dimension $\geq 2$ otherwise. 
\end{proof}
In the following lemma, we use $\simeq$ to denote equality up to a multiplicative constant. 
\begin{lem}[Expansion lemma]\label{rk1lem}
Suppose $\bs f$ is an $\ell$-weight of $V\in \Ob\hat\cat$. Suppose $v\in V_{\bs f}$ and $i\in I$ are such that $x_{i,r}^+\on v=0$ for all $r\in \Z$. 
\begin{enumerate}[(i)]
\item If $f_i(u) \simeq \frac{1-cu}{1-au}$ with $a\neq c$, then $$x_{i,s}^+\left(x_{i,r}^- \on v  -a^r v_a\right) = 0\quad\text{for all $s\in \Z$,}$$ for some $\ell$-weight vector  $v_a\in V_{\bs f \bs A_{i,a}^{-1}}$. 
In particular, $v_a \equiv x_{i,0}^- \on v$ modulo $\bigcap_{r\in \Z}\ker x^+_{i,r}$.
\item If $f_i(u) \simeq 
\frac{1-cu}{1-au} 
\frac{1-du}{1-bu}$, with $a\neq b$ and $\{a,b\} \cap \{c,d\} = \emptyset$,
then $$x_{i,s}^+\left(x_{i,r}^- \on v - (a^r v_a + b^r v_b)\right) =0\quad\text{ for all $s\in \Z$,}$$ for some $\ell$-weight vectors $v_a\in V_{\bs f \bs A_{i,a}^{-1}}$ and $v_b\in V_{\bs f \bs A_{i,b}^{-1}}$. \\
In particular $v_a\equiv \frac{bx_{i,0}^- - x_{i,1}^-}{b-a}\on v$ and $v_b\equiv \frac{ax_{i,0}^- - x_{i,1}^-}{a-b} \on v$ modulo  $\bigcap_{r\in \Z}\ker x^+_{i,r}$.
\item If $f_i(u) \simeq 
\frac{1-cu}{1-au} 
\frac{1-du}{1-au}$, with $a\notin \{c,d\}$, then
$$x_{i,s}^+\left(x_{i,r}^- \on v - (a^r v_a + ra^{r} v'_a)\right) =0\quad\text{ for all $s\in \Z$,}$$ for some linearly independent vectors $v_a,v_a'\in V_{\bs f \bs A_{i,a}^{-1}}$.\\
In particular $v_a\equiv x_{i,0}^-\on v$ and $v_a'\equiv (a^{-1} x_{i,1}^- - x_{i,0}^-)\on v$ modulo  
$\bigcap_{r\in \Z}\ker(x_{i,r}^+)$.
\end{enumerate}
\end{lem}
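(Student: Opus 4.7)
My plan is to exploit the Drinfeld commutation relation
\[
[x^+_{i,s},\, x^-_{i,r}] \;=\; \frac{\phi^+_{i,s+r}-\phi^-_{i,s+r}}{q_i-q_i^{-1}},
\]
which is available since $c$ acts as the identity on $V\in\hat\cat$. Because $x^+_{i,s}v = 0$ for all $s$, this yields $x^+_{i,s}x^-_{i,r}v = (q_i-q_i^{-1})^{-1}(\phi^+_{i,s+r}-\phi^-_{i,s+r})v$. Taking $v$ to be an exact $\ell$-weight vector (the generalized case follows using the upper-triangularity of $\phi^\pm_{i,n}$ as in (\ref{uppertriang})), the right-hand side becomes $F_{i,s+r}v/(q_i-q_i^{-1})$, where $F_{i,n}:=f^+_{i,n}-f^-_{i,n}$ is the $n$-th coefficient of the doubly-infinite formal Laurent series obtained as the difference of the expansions of $f_i(u)$ at $0$ and at $\infty$. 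A partial-fractions computation then yields $F_{i,n}=\mu\, a^n$ in case~(i), $F_{i,n}=\mu_a\, a^n+\mu_b\, b^n$ in case~(ii), and $F_{i,n}=(\nu_1+n\nu_2)\,a^n$ in case~(iii), with each scalar $\mu,\mu_a,\mu_b,\nu_2$ nonzero under the stated hypotheses on $a,b,c,d$.

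Next I will solve for the ansatz vectors by linear algebra in each case. Taking $v_a := x^-_{i,0}v$ in case~(i), one checks $x^+_{i,s}v_a = \mu a^s v/(q_i-q_i^{-1})$, and the required identity $x^+_{i,s}(x^-_{i,r}v - a^r v_a) = 0$ follows immediately. In case~(ii), writing $v_a=\alpha x^-_{i,0}v+\beta x^-_{i,1}v$ and applying $x^+_{i,s}$ produces $\mu_a(\alpha+a\beta)a^s v + \mu_b(\alpha+b\beta)b^s v$; imposing $\alpha+b\beta=0$ and $\alpha+a\beta=1$ gives $(\alpha,\beta)=(-b,1)/(a-b)$, which is the stated formula $v_a\equiv (bx^-_{i,0}-x^-_{i,1})v/(b-a)$; the computation for $v_b$ is symmetric. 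In case~(iii), $v_a := x^-_{i,0}v$ produces the constant-in-$s$ piece, while the ansatz $v'_a := \alpha x^-_{i,0}v+\beta x^-_{i,1}v$ combined with the requirement $x^+_{i,s}v'_a = \nu_2 a^s v/(q_i-q_i^{-1})$ forces $(\alpha,\beta) = (-1, a^{-1})$, matching the stated formula. In each case direct substitution yields $x^+_{i,s}(x^-_{i,r}v - \mathrm{ansatz}) = 0$ for all $s$.

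The main obstacle is the last step: showing that the constructed $v_a$ (and $v_b$, $v'_a$) may be taken as honest $\ell$-weight vectors in the prescribed eigenspace $V_{\bs f\A_{i,a}^{-1}}$ (respectively $V_{\bs f\A_{i,b}^{-1}}$). By Proposition~\ref{stepprop} the vector $v_a$, which lies in the weight space $V_{\wt(\bs f)\overline{\alpha_i}^{-1}}$, decomposes as $\sum_{a'}v_a^{(a')}$ with $v_a^{(a')}\in V_{\bs f\A_{i,a'}^{-1}}$. The plan is to argue that each $v_a^{(a')}$ for $a'\neq a$ lies in $\bigcap_s\ker x^+_{i,s}$: any surviving $V_{\bs g}$-component of $x^+_{i,s}v_a^{(a')}$ with $\bs g\neq\bs f$ would have to cancel in the sum over $a'$, and an inductive argument on the number of distinct $a'$ appearing should force $v_a^{(a')}\in\bigcap_s\ker x^+_{i,s}$ whenever no ``$\delta$-function at $a'$'' appears in $F_i(z)$. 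Once this is established, projecting onto $V_{\bs f\A_{i,a}^{-1}}$ preserves the class modulo $\bigcap_s\ker x^+_{i,s}$, and then a standard Jordan-form argument replaces the projection by an exact eigenvector representative within the same coset, completing the proof.
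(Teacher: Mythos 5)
Your computational core coincides with the paper's: both proofs rest on the relation $[x^+_{i,s},x^-_{i,r}]=(\phi^+_{i,s+r}-\phi^-_{i,s+r})/(q_i-q_i^{-1})$, a partial-fraction evaluation of $\varphi_{i,n}\on v=(f^+_{i,n}-f^-_{i,n})v$ in each of the three cases, and the same linear systems for the ansatz coefficients (your $(\alpha,\beta)$ in cases (ii) and (iii) reproduce exactly the vectors $\frac{bx^-_{i,0}-x^-_{i,1}}{b-a}\on v$, $\frac{ax^-_{i,0}-x^-_{i,1}}{a-b}\on v$ and $(a^{-1}x^-_{i,1}-x^-_{i,0})\on v$ of the statement). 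Where you diverge is the step you yourself flag as the main obstacle: locating the constructed vectors in the correct $\ell$-weight spaces. The paper does this by reducing at the outset, via Corollary \ref{taucor}, to the irreducible $\hat U_{\{i\}}$-module with highest $\ell$-weight $f_i(u)$: there the relevant weight space is spanned by genuine $\ell$-weight vectors whose $i$-components are read off directly, and Corollary \ref{taucor} then forces all the other components of the $\ell$-weight, so nothing further is needed. You instead stay in $V$ and propose to decompose $v_a=\sum_{a'}v_a^{(a')}$ using Proposition \ref{stepprop} and show the components with $a'\neq a$ lie in $\bigcap_s\ker x^+_{i,s}$.

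That last step is where your write-up has a genuine gap. The cancellation/induction you sketch is not carried out; it can be completed (the $\ell$-weights $\bs f\A_{i,a'}^{-1}\A_{i,a''}$ with $a''\neq a'$ determine the pair $(a',a'')$, so no cross-cancellation is possible and each $x^+_{i,s}\on v_a^{(a')}$ lands in $V_{\bs f}$; one then separates the contributions using the linear independence of the characters $s\mapsto (a')^s$), but as written it is only a plan. More seriously, the final appeal to ``a standard Jordan-form argument'' to replace the projection $v_a^{(a)}$ by an exact eigenvector in the same coset modulo $\bigcap_s\ker x^+_{i,s}$ is not a standard fact and fails in general: a generalized eigenvector need not be congruent, modulo a prescribed invariant subspace, to an exact eigenvector (take $\phi$ acting on $\C^2$ by a single Jordan block and the subspace spanned by the actual eigenvector). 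Indeed, case (iii) of the lemma is precisely the situation where exact eigenvectors cannot be produced, which is why the statement there claims only ``linearly independent vectors'' rather than ``$\ell$-weight vectors''. To close your argument you would need either an explicit computation showing $(\phi^\pm_{i,n}-g^\pm_{i,n})\on v_a^{(a)}$ not only lies in $\bigcap_s\ker x^+_{i,s}$ but can be absorbed into a genuine eigenvector correction, or, more simply, the paper's reduction to the irreducible rank-one module where the issue does not arise.
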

\begin{proof}
Given Corollary \ref{taucor}, it is enough to show that statements (i), (ii) and (iii) hold in the  irreducible $\hat U_{\{i\}}$-module $W$ whose highest weight is $v$. So in the rest of this proof, we work in $W$. 

Let $\varphi_{i,s} := \phi^+_{i,s}- \phi^-_{i,s}$. 

In case (i) we have $\varphi_{i,s} \on v = \alpha a^s 
(1-\frac{c}{a})$ for some $\alpha\in \Cx$ and thus, for all  $r,s\in \Z$,
\be (\varphi_{i,r+s} -a^r \varphi_{i,s}) \on v=0,\quad\text{and hence}\quad
 x_{i,s}^+(x_{i,r}^-\on v - a^r w ) = 0,\nn\ee
where $w_a=x_{i,0}^-\on v\in W$. 

In case (ii) we shall show that for all $r,s\in \Z$,  
\be\nn x_{i,s}^+\left(x_{i,r}^-\on v - a^r w_a  - b^r w_b\right)=0,\ee where $w_a=  \frac{bx_{i,0}^- - x_{i,1}^-}{b-a}\on v$ and $w_b=\frac{ax_{i,0}^- - x_{i,1}^-}{a-b} \on v$. This follows from
\be( (a-b) \varphi_{i,r+s} + (a^r b- b^r a) \varphi_{i,s} + (b^r-a^r) \varphi_{i,s+1} )\on v = 0,\nn\ee
which in turn holds because
\be \varphi_{i,r} \on v =  v  
\beta\left( a^{r-1} \frac{(a-c)(a-d)}{a-b} + b^{r-1} \frac{(b-c)(b-d)}{b-a}\right)\nn\ee
for some $\beta\in \Cx$. 

Finally in case (iii) we shall show that for all $r,s\in \Z$, $x_{i,s}^+(x_{i,r}^- v - (a^r w + ra^r w'))=0$ where $w:=x_{i,0}^- v$ and $w':= a^{-1} x_{i,1}^- v - x_{i,0}^- v$. For this it is enough to show that $x_{i,s}^+(x_{i,r}^- v - a^r x_{i,0}^- v - ra^{r-1} x_{i,1}^- v + ra^r x_{i,0}^- v)=0$ is singular, which is true if and only if
\be \left(\varphi_{i,r+s} -a^r \varphi_{i,s} - ra^{r-1} \varphi_{i,s+1} + ra^r\varphi_{i,s}\right)\on v=0 .\nn\ee
This is true, given that
\be \varphi_{i,r}\on v = v
\gamma\left((r+1) a^r\left( 1- \frac c a - \frac d a + \frac{cd}{a^2}\right) + a^{r}\left(\frac c a +\frac d a -\frac{2cd}{a^2}\right)\right)\nn\ee
for some $\gamma\in \Cx$. 

Similar direct calculations show that, in each case (i), (ii) and (iii), the given vectors in $W$ have the $\ell$-weights claimed, and are not in $\bigcap_{r\in \Z} \ker(x_{i,r}^+)$ and hence are not zero.
\end{proof}

\subsection{The case of two nodes}\label{twonodesec}
We write
\be   \delta_{a,b} := \begin{cases} 1 & \text{if } a=b, \\ 0 & \text{otherwise}\end{cases} \qquad\text{and}\qquad 
\check\delta_{a,b} := \begin{cases} 0 & \text{if } a=b, \\ 1 & \text{otherwise.}\end{cases}\nn\ee 
Given any $a,c\in \Cx$, define:
\bea a_1:= a,\quad\text{and}\quad a_{i+1} &:=& a_i q^{-B_{i,i+1}}\text{ for each   }1\leq i\leq N-1,\\
 c_N:= c\quad\text{and}\quad c_{i-1} &:=& c_i q^{-B_{i-1,i}}\text{ for each } 2\leq i\leq N,\nn\eea
and then, for $0\leq K, S\leq N$, let 
\be \bs f_{K,S} := \bs f \cdot \left(\prod_{k=1}^K \A_{k,a_k}^{-1}\right) \cdot \left(\prod_{s=1}^S \A_{N+1-s,c_{N+1-s}}^{-1} \right). \nn\ee

\begin{prop}\label{twonodes}
Suppose $\bs f\in \mc R$ is of the form 
\be f_1(u) = q^{\mu B_{11}/2} \frac{1-q^{-\mu B_{11}}au}{1- au}, \quad
    f_N(u) = q^{\nu B_{NN}/2}\frac{1- q^{-\nu B_{NN}}cu}{1-  cu}, \label{f1n}\ee
for some $\mu,\nu\in \Cx$, and $f_j(u) = 1$ for all $1<j<N$. For each $0\leq M < N$, 
\be \chi_q(\L(\bs f))|_M = \sum_{K=0}^M \bs f_{K,M-K}. \quad
\label{wtk}\nn\ee
while
\be \chi_q(\L(\bs f))|_N = \sum_{K=1}^{N-1} \bs f_{K,N-K}\label{ptii} 
+\bs f_{N,0} \check\delta_{a_N,cq^{-\nu B_{NN}}}
+\bs f_{0,N} \check\delta_{c_1,aq^{-\mu B_{11}}} .\ee
\end{prop}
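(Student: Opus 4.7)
The plan combines the cone property (Corollary \ref{coneprop}) and the step lemma (Proposition \ref{stepprop}) to constrain which $\ell$-weights can appear, with the expansion lemma (Lemma \ref{rk1lem}) to produce explicit vectors and pin down multiplicities.

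For the upper bound, note that by Corollary \ref{coneprop} every $\ell$-weight of $\L(\bs f)$ has the form $\bs f\cdot\prod_k\bs A_{i_k,b_k}^{-1}$, and weights in $\wt(\bs f)\overline{W_M}$ correspond to multisets with $M$ distinct indices. Iterating Proposition \ref{stepprop} and applying Corollary \ref{taucor} at each descent step, the parameter $b$ of the new $\bs A_{i,b}^{-1}$ is forced to coincide with a pole of the current $i$-th component. Since $f_j(u)=1$ for $1<j<N$, interior poles develop only as propagated from the end nodes via previously applied $\bs A^{-1}$'s. A straightforward induction on the length of the descent chain then shows that the only admissible parameters are the $a_k$'s (propagated from the left) and $c_k$'s (propagated from the right), and the reachable $\ell$-weights are exactly the $\bs f_{K,S}$ with $K+S=M$.

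For existence with $M<N$, apply Lemma \ref{rk1lem}(i) iteratively, alternating between descents from the left end (node $1$) and the right end (node $N$). Starting from a highest $\ell$-weight vector, at node $1$ the component $f_1(u)$ is a single simple ratio and case (i) yields a vector in $V_{\bs f_{1,0}}$. At node $2$ the component $(\bs A_{1,a_1}^{-1})_2(u)=q^{-B_{12}}(1-q^{B_{12}}a_1u)/(1-a_2u)$ is again a single simple ratio, so case (i) applies, and so on. When the descent has lowered fewer than $N$ times, each newly visited node has at most one lowered neighbour, so its $\ell$-weight component is a single simple ratio and case (i) continues to apply. Since the left and right chains act on disjoint portions of the Dynkin diagram when $K+S<N$, they combine to produce vectors in every $V_{\bs f_{K,S}}$. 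The uniqueness clause in Lemma \ref{rk1lem}(i) (the vector is $x_{i,0}^-\on v$ modulo $\bigcap_r\ker x_{i,r}^+$), together with a descent-from-the-top argument restricted to adjacent weight spaces via the factorization/restriction lemmas (Lemmas \ref{factorlem}, \ref{reslem}), forces multiplicity exactly $1$.

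The delicate step is $M=N$. For the middle terms, after descending from both ends the two chains meet at a junction node where the current $\ell$-weight component is a product of two simple ratios; case (ii) of Lemma \ref{rk1lem} applies generically and produces independent vectors with $\ell$-weights $\bs f_{K,N-K}$ and $\bs f_{K-1,N-K+1}$, so the $N-1$ middle terms all survive with multiplicity one. For the boundary term $\bs f_{N,0}$, descent through nodes $1,\dots,N-1$ brings the $\ell$-weight at node $N$ to
\begin{equation*}
(\bs f_{N-1,0})_N(u)\simeq \frac{(1-q^{-\nu B_{NN}}cu)(1-q^{B_{N-1,N}}a_{N-1}u)}{(1-cu)(1-a_Nu)}.
\end{equation*}
Of the four potential numerator/denominator cancellations, $a_N=q^{2B_{N-1,N}}a_N$ is impossible, while the cancellations involving $c$ correspond to the $\bs f_{N-1,1}$ direction; the only nontrivial condition for the $(1-a_Nu)$ pole to survive and trigger case (ii) is precisely $a_N\neq cq^{-\nu B_{NN}}$, matching $\check\delta_{a_N,cq^{-\nu B_{NN}}}$. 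When the condition fails, the factor $(1-a_Nu)$ cancels, the situation collapses to case (i), and only $\bs f_{N-1,1}$ is produced. The symmetric analysis at node $1$ after descent from the right gives the condition $c_1\neq aq^{-\mu B_{11}}$ governing $\bs f_{0,N}$.

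The main obstacle is the $M=N$ case: verifying that the transition between cases (i) and (ii) of Lemma \ref{rk1lem} is controlled exactly by the stated $\check\delta$ indicators (by enumerating all potential cancellations and showing the others are either automatic or non-binding), and confirming that each $\ell$-weight space has multiplicity exactly $1$ even at the junctions, which requires combining the expansion lemma construction with the restriction-to-subdiagram arguments.
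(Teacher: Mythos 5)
Your overall architecture is the same as the paper's: descend from both ends of the diagram using Lemma \ref{rk1lem}(i) while $M<N$, switch to case (ii) at the junctions when $M=N$, and read off the $\check\delta$ factors from the possible cancellations in $(\bs f_{N-1,0})_N$ and $(\bs f_{0,N-1})_1$. Your analysis of the boundary terms $\bs f_{N,0}$, $\bs f_{0,N}$ is correct. However, there is a genuine gap at the step you yourself flag as the main obstacle: multiplicity one for the middle terms $\bs f_{K,N-K}$, $1\leq K\leq N-1$. The $\ell$-weight $\bs f_{K,N-K}$ is reached \emph{twice} by your construction --- once from $v_{K-1,N-K}$ by lowering at node $K$ (via the vector $(c_K x_{K,0}^- - x_{K,1}^-)\on v_{K-1,N-K}$), and once from $v_{K,N-K-1}$ by lowering at node $K+1$ (via $(a_{K+1} x_{K+1,0}^- - x_{K+1,1}^-)\on v_{K,N-1-K}$). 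Case (ii) of the expansion lemma only tells you each of these is a nonzero vector of the right $\ell$-weight; it does not tell you they are proportional, and if they were independent the coefficient of $\bs f_{K,N-K}$ would be $2$, contradicting (\ref{ptii}). The tools you invoke to close this (Lemmas \ref{factorlem} and \ref{reslem}) cannot do so: the factorization lemma applies only to pairwise commuting subdiagrams, whereas nodes $K$ and $K+1$ are adjacent, and the restriction lemma is a statement about least affinizations, not about weight multiplicities. The paper closes this gap by a direct computation with the Drinfeld quadratic relation $x_{K,1}^- x_{K+1,0}^- - q^{-B_{K,K+1}} x_{K+1,0}^- x_{K,1}^- = q^{-B_{K,K+1}} x_{K,0}^- x_{K+1,1}^- - x_{K+1,1}^- x_{K,0}^-$, which, combined with the eigenvalue relations $x_{K,1}^-\on v_{K-1,N-1-K}=a_K x_{K,0}^-\on v_{K-1,N-1-K}$ and $x_{K+1,1}^-\on v_{K-1,N-1-K}=c_{K+1}x_{K+1,0}^-\on v_{K-1,N-1-K}$, shows the two candidate vectors are literally equal; a further argument with $[x^+_{K+1,s},x^-_{K,r}]\on v_{K-1,N-K}=0$ kills the residual components in $\bigcap_r\ker x^+$.

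Two smaller points. First, Lemma \ref{rk1lem} requires the vector being lowered to be annihilated by \emph{all} $x^+_{i,r}$ for the relevant node $i$; the intermediate vectors $v_{K,M-K}$ are not highest weight, so this is not automatic for $i=K$ and needs the inductive bookkeeping the paper supplies (using that the analogous eigenvalue relation held at the previous step). Second, in the exceptional case $a_{K+1}=c_{K+1}$ a middle component acquires a double pole and the corresponding $\ell$-weight space has dimension $2$; the formula survives only because two of the monomials $\bs f_{K,S}$ then coincide. Your proposal does not address this degeneration, which is needed to justify (\ref{ptii}) outside the generic case.
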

\begin{proof}
Let $v_{0,0}$ be a highest $\ell$-weight vector of $\L(\bs f)$ and define
\be v_{K,L} := (x_{K,0}^- x_{K-1,0}^-\dots x_{1,0}^-) (x_{N+1-L,0}^- \dots x_{N-1,0}^- x_{N,0}^-) v_{0,0}.\nn\ee
 
We shall first show by induction on $M$ that for all $0\leq M <N$, $v_{K,M-K}\in \L(\bs f)_{\bs f_{K,M-K}}$ and that $(v_{K,M-K})_{0\leq K\leq M}$ is a basis of $\bigoplus_{\lambda\in W_M}\L(\bs f)_\lambda$. This is true for $M=0$. Assume it is true for some $M<N-1$.
By definition of the $W_{M}$, $\bigoplus_{\lambda\in W_{M+1}}\L(\bs f)_\lambda$ is then spanned by the vectors
\be \left\{ x_{i,r}^- \on v_{K,M-K} : 0\leq K\leq M,  r\in \Z, K<i<N+1-M+K \right\}.\nn\ee

For all $K+1<i<N-M+K$, $x_{i,r}^- \on v_{K,L}=0$. 

Now we claim that for each $K$ such that $0\leq K\leq M$,
\be x_{N-M+K,r}^-\on v_{K,M-K}= c_{N-M+K}^r x_{N-M+K,0}^- \on v_{K,M-K} \in \L(\bs f)_{\bs f_{K,M-K+1}}\label{cl1}\ee
for all $r\in \Z$. Indeed, on weight grounds, $x_{i,s}^+x_{N-M+K,r}^-\on v_{K,M-K}=0$  for all $i\notin\{K,N-M+K\}$. For $M=0$, the claim then follows from Lemma \ref{rk1lem} part (i).  
For $M>0$, when $K> 0$ we need also the fact that (\ref{cl1}) holds at the previous step in the induction on $M$: $$x_{N-M+K,r}^-\on v_{K-1,M-K}= c_{N-M+K}^r x_{N-M+K,0}^- \on v_{K-1,M-K},$$ from which, since  $$x_{K,s}^+ x_{N-M+K,r}^-\on v_{K,M-K} = x_{N-M+K,r}^- x_{K,s}^+\on v_{K,M-K} = x_{N-M+K,r}^- \on v_{K-1,M-K} \lambda_s$$ for some coefficients $\lambda_s\in \C$, we have  $$x_{K,s}^+( x_{N-M+K,r}^-- c_{N-M+K}^r x_{N-M+K,0}^-) \on v_{K,M-K} =0.$$
Hence $x_{i,s}^+x_{N-M+K,r}^-\on v_{K,M-K}=0$  for all $i\neq K$. The claim then follows from Lemma \ref{rk1lem} part (i). 

By a similar argument, for each $K$ such that $0\leq K\leq M-1$,
\be x_{K+1,r}^-\on v_{K,M-K} = a_{K+1}^r x_{K+1,0}^- \on v_{K,M-K} \in \L(\bs f)_{\bs f_{K+1,M-K}}\nn\ee 
for all $r\in \Z$. 
Since $M<N-1$, $[x_{K+1,0}^-, x_{N-M+K,0}^-] = 0$. Hence, for all $0\leq K\leq M$,
\be x_{K+1,0}^- \on v_{K,M-K} = x_{N-M+K+1,0}^- \on v_{K+1,M-K-1} = v_{K+1,M-K}.\nn\ee This completes the inductive step.


We turn to computing $\chi_q(L(\bs f))|_N$.
Note that for all $1<K<N$,
\be (\bs f_{K-1,N-K})_{K}(u)  =  
q^{-B_{K-1,K}} \frac{1-q^{B_{K-1,K}} a_{K-1} u} {1-q^{-B_{K-1,K}} a_{K-1} u} 
q^{-B_{K,K+1}} \frac{1-q^{B_{K,K+1}} c_{K+1} u} {1-q^{-B_{K,K+1}} c_{K+1} u}, \label{twostring}\ee
while
\bea\nn (\bs f_{0,N-1})_1(u) &=& 
q^{\mu B_{11}/2} \frac{1-q^{-\mu B_{11}}au}{1- au}
q^{-B_{12}} \frac{1-q^{B_{12}} c_{2} u} {1-q^{-B_{1,2}} c_{2} u},\\
         (\bs f_{N-1,0})_N(u) &=& 
q^{-B_{N-1,N}} \frac{1-q^{B_{N-1,N}} a_{N-1} u} {1-q^{-B_{N-1,N}} a_{N-1}u} q^{\nu B_{NN}/2}\frac{1- q^{-\nu B_{NN}}cu}{1-  cu}.\label{twostring2}\eea

Let us consider the generic case in which all of these rational functions are in lowest terms as written -- i.e. there are no cancellations -- and in which none have poles of second order.  
Now certainly,
\be \L(\bs f)|_N = \Span_{1\leq K\leq N,r\in \Z} x_{K,r}^- \on v_{K-1,N-K}.\nn\ee
Suppose $d\in\Cx\setminus \{a_K,c_{K}\}$. Then by the previous part, for all $j\neq K$, $\bs f_{K-1,N-K} \bs A_{K,d}^{-1} \bs A_{j,e}$ is not an $\ell$-weight of $\L(\bs f)|_{N-1}$ for any $e\in \Cx$. Hence, by Proposition \ref{stepprop}, for all $v\in \L(\bs f)_{\bs f_{K-1,N-K} \bs A_{K,d}^{-1}}$ we have $x_{j,r}^+\on v=0$ for all $j\neq K$, $r\in \Z$. So if $x_{K,r}^-\on v_{K-1,N-K}$ had a nonvanishing component $w$ in $\L(\bs f)_{\bs f_{K-1,N-K} \bs A_{K,d}^{-1}}$, then we would have to have $x_{K,s}^+\on w\neq 0$ for some $s\in \Z$ (otherwise $w$ would be singular). But, by Lemma \ref{rk1lem} part (ii), we know that modulo vectors in $\bigcap_{r\in \Z}\ker x_{K,r}^+$, every  $x_{K,r}^- \on v_{K-1,N-K}$ is in the span of the following two vectors:
\be (a_{K} x_{K,0}^- - x_{K,1}^-) \on v_{K-1,N-K} \in \L(\bs f)_{\bs f_{K-1,N+1-K}}, \nn\ee
\be (c_{K} x_{K,0}^- - x_{K,1}^-) \on v_{K-1,N-K} \in \L(\bs f)_{\bs f_{K,N-K}}.\nn\ee
Therefore  $\bs f_{K,N-K}$, $0\leq K\leq N$, and no others, are the $\ell$-weights of $\L(\bs f)|_N$. 

Now, the defining relations of $\uqgh$ include 
\be x_{K,1}^- x_{K+1,0}^- - q^{-B_{K,K+1}} x_{K+1,0}^- x_{K,1}^- 
 =  q^{-B_{K,K+1}} x_{K,0}^- x_{K+1,1}^- - x_{K+1,1}^- x_{K,0}^- \nn\ee
and we saw above that
\be x_{K,1}^- \on v_{K-1,N-1-K} = a_K x_{K,0}^- \on v_{K-1,N-1-K} \quad\text{and}\quad
    x_{K+1,1}^- \on v_{K-1,N-1-K} = c_{K+1} x_{K+1,0}^- \on v_{K-1,N-1-K}.\nn\ee
Hence, for any $1\leq K\leq N-1$,  the vector
\be u: = (c_K x_{K,0}^- -x_{K,1}^-) \on v_{K-1,N-K} = (c_K x_{K,0}^- -x_{K,1}^-) x_{K+1,0}^- \on v_{K-1,N-1-K} \nn\ee 
is equal to 
\be
  (a_{K+1} x_{K+1,0}^- -x_{K+1,1}^-) \on v_{K,N-1-K}= (a_{K+1} x_{K+1,0}^- -x_{K+1,1}^-) x_{K,0}^- \on v_{K-1,N-1-K}.\ee
And on $\ell$-weight grounds $x_{i,r}^+(\L(\bs f)_{\bs f_{K,N-K}}) =0$ for all $i\notin \{K,K+1\}$.
Hence every vector $v$ in the $\ell$-weight space $\L(\bs f)_{\bs f_{K,N-K}}$ is of the form
\be v = \lambda u  + w + y,\nn\ee
where 
\be w\in \left(\bigcap_{s\in \Z} \ker x_{K,s}^+\right)\cap \left(\Span_{r\in \Z} x_{K,r}^- \on v_{K-1,N-K}\right)\nn\ee
 and 
\be y\in \left(\bigcap_{s\in \Z} \ker x_{K+1,s}^+\right)\cap \left(\Span_{r\in \Z} x_{K+1,r}^- \on v_{K,N-1-K}\right).\nn\ee
Now by considering $0= [x^+_{K+1,s},x_{K,r}^- ] \on v_{K-1,N-K}$ we see that $w\in \bigcap_{s\in \Z} \ker x_{K+1,s}^+$. So $w=0$. Similarly $y=0$. Therefore $u$ spans $\L(\bs f)_{\bs f_{K,N-K}}$, and $\dim(\L(\bs f)_{\bs f_{K,N-K}}) = 1$, as required.

For $\bs f_{0,N}$ and $\bs f_{N,0}$ the logic is simpler because there is no need to identify vectors. On $\ell$-weight grounds and by Lemma \ref{rk1lem} part (ii) one finds  $\dim(\L(\bs f)_{\bs f_{0,N}}) = \dim(\L(\bs f)_{\bs f_{N,0}}) = 1$.

It remains to consider the exceptional cases in which cancellations or coincident poles occur in the functions $(\bs f_{K-1,N-K})_{K}(u)$ as written in (\ref{twostring}--\ref{twostring2}). In view of Lemma \ref{rk1lem}, the dimensions of the `outermost' $\ell$-weight spaces $\L(\bs f)_{\bs f_{0,N}}$ and $\L(\bs f)_{\bs f_{N,0}}$ drop to zero under exactly the conditions specified in (\ref{ptii}). 
On the other hand one finds that there are no conditions under which the dimensions of the $\ell$-weight spaces $\L(\bs f)_{\bs f_{K,N-K}}$, $1\leq K\leq N-1$, drop to zero. Suppose for example that a cancellation occurs in $(\bs f_{K-1,N-K})_K$ for some $1< K\leq N$: say  $q^{-B_{K-1,K}} a_{K-1} = q^{B_{K,K+1}} c_{K+1}$. That is, $a_{K+1} = c_{K+1}$. Then there is a double pole in $(f_{K,N-K-1})_{K+1}$ and $\dim(\L(\bs f)_{\bs f_{K+1,N-K-1}}) = 2$, which is correctly reflected in the expression (\ref{ptii}) since $\bs f_{K+1,N-K-1} = \bs f_{K, N-K}$ in this case. 
\end{proof}

\begin{cor}\label{twonc}
For all $\mu,\nu\in \C$, an affinization $\L(\bs f)$ of $V(\overline{\mu\omega_1 + \kappa\omega_N})$ is least if and only if 
\be f_1(u) = q^{\mu B_{11}/2} \frac{1-aq^{-\mu B_{11}}u}{1- au}, \quad
    f_N(u) = q^{\nu B_{NN}/2}\frac{1- cq^{-\nu B_{NN}}u}{1-  cu},\nn\ee
for some $a,c\in \C$ obeying at least one of the following equations:
\be aq^{-\sum_{i=1}^{N-1} B_{i,i+1}} = cq^{-\nu B_{NN}}, \qquad
    aq^{-\mu B_{11}} = cq^{-\sum_{i=1}^{N-1} B_{i,i+1}}. \label{minrs}\ee
\end{cor}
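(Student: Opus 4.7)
The plan is to read off the answer directly from Corollary \ref{sl2mincor} and Proposition \ref{twonodes}.

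By Corollary \ref{sl2mincor}, any least affinization has each $f_i$ a string. Since $\wt(\bs f)_i = 1$ for $1 < i < N$ and $q$ is transcendental, this forces $f_i(u) = 1$ identically for $1 < i < N$. At nodes $1$ and $N$, rewriting the string form (\ref{Sdef}) using $q_i = q^{r_i}$ and $B_{ii} = 2r_i$ reproduces exactly the shapes displayed in the statement, parametrized by free base points $a, c \in \Cx$. So the question reduces to which choices of $(a,c)$ yield a least affinization.

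Applying Proposition \ref{twonodes} to this $\bs f$, I observe that for every $M < N$ the $\ell$-weights $\bs f_{K,M-K}$ appearing in $\chi_q(\L(\bs f))|_M$ carry pairwise distinct $\uqg$-weights, because the index sets $\{1,\ldots,K\}$ and $\{N+1-L,\ldots,N\}$ with $K+L = M < N$ are disjoint and differ for different $K$. Hence every $\uqg$-weight in $W_M$, $M < N$, has multiplicity exactly $1$, independent of $a$ and $c$. By contrast, the unique $\uqg$-weight in $W_N$, namely $\overline{-\sum_i \alpha_i}$, collects every $\bs f_{K,N-K}$ into a single weight space of multiplicity $(N-1) + \check\delta_{a_N,\,cq^{-\nu B_{NN}}} + \check\delta_{c_1,\,aq^{-\mu B_{11}}}$. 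Substituting $a_N = aq^{-\sum_{i=1}^{N-1} B_{i,i+1}}$ and $c_1 = cq^{-\sum_{i=1}^{N-1} B_{i,i+1}}$ identifies the two $\check\delta$-conditions with the two equations in (\ref{minrs}).

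Since every affinization in this two-parameter family agrees on $\bigcup_{M<N} W_M$, the partial order on them reduces to the single multiplicity at $\overline{-\sum_i \alpha_i}$; weights $\alpha$ strictly above $\sum_i\alpha_i$ are handled by clause (ii) of the weakly-precedes relation with $\beta = \sum_i \alpha_i$. Minimum multiplicity is achieved iff at least one of the two $\check\delta$'s vanishes, giving both directions of the corollary. The main obstacle in a fully self-contained argument is treating weights $\alpha \in Q^+$ with some coefficient $m_i = 0$ and another $m_j \geq 2$, which lie outside every $W_M$ and are not controlled by Proposition \ref{twonodes}; for those I would appeal to the restriction lemma (Lemma \ref{reslem}) applied to the subdiagram $J = \mathrm{supp}(\alpha)$ to reduce to a lower-rank instance.
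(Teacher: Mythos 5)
Your argument is correct and follows essentially the same route as the paper: the published proof likewise declares the result ``immediate from Proposition \ref{twonodes}'', the only data distinguishing members of the string family being the multiplicity $(N-1)+\check\delta_{a_N,cq^{-\nu B_{NN}}}+\check\delta_{c_1,aq^{-\mu B_{11}}}$ at the weight $\overline{\lambda-\sum_{i}\alpha_i}$, exactly as you compute. The one ingredient the paper adds and you omit is the observation that $\bs f\mapsto\bs f^\dag$ exchanges the two equations in (\ref{minrs}) while preserving the $\uqg$-isomorphism class (Proposition \ref{sameclass}), which is the quickest way to see that the two conditions pick out a \emph{single} equivalence class (so that ``least'', not merely ``minimal'', is justified) --- your explicit multiplicity count substitutes for this; also, for the weights outside $\bigcup_M W_M$ the tool you actually need is Lemma \ref{factorlem} (identifying $\L(\bs f)_{\overline{\lambda-\alpha}}$ with a weight space of the restriction to $\mathrm{supp}(\alpha)$, where nothing depends on $a$ and $c$) rather than Lemma \ref{reslem}, though the paper's own proof is no more explicit on that point than yours.
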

\begin{proof} 
If $\L(\bs f)$ obeys one of the two conditions (\ref{minrs}) then $L(\bs f^\dag)$ obeys the other, and by Proposition \ref{sameclass}, both define the same equivalence class.
The result is then immediate from Proposition \ref{twonodes} \end{proof}

\begin{rem}If $V(\overline{\mu\omega_1 + \kappa\omega_N})$ is finite-dimensional, i.e. $\mu,\kappa\in \Z_{\geq 0}$, at most one of the equations (\ref{minrs}) can hold. However, for infinite-dimensional modules they are not mutually exclusive. In type $A_2$, for example, the least affinization of $V=V(\overline{\mu\omega_1-\mu\omega_2})$ is the class of $\L(\bs f)$, where
\be f_1(u) = q^\mu\frac{1-aq^{-2\mu}u}{1-au}, \quad f_2(u) = q^{-\mu} \frac{1-aq^{\mu-1} u}{1-aq^{-\mu-1}u}, \qquad a\in \Cx. \nn\ee
This is an evaluation module, so $\L(\bs f)\cong V$ as $\uqg$-modules. By the usual Weyl character formula, all weight spaces of $V$ are one-dimensional. 
\end{rem}

\subsection{The case of three nodes}\label{threenodesec}
Now given any $b\in \Cx$, define
\bea b_j:= b,\quad\text{and}\quad b_{i-1} &:=& b_i q^{-B_{i-1,i}}\text{ for each } 1<i\leq j,\nn\\ b_{i+1} &:=& b_i q^{-B_{i,i+1}}\text { for each  } j\leq i<N,\nn\eea
and, for $0\leq K,L<j$ and $0\leq R,S < N+1-j$, 
\be \bs f_{K,L,R,S} := \bs f \A_{j,b}^{-1} 
\left(\prod_{k=1}^K \A_{i,a_i}^{-1}\right) 
\left(\prod_{\ell=1}^{L} \A_{j-\ell,b_{j-\ell}}^{-1} \right)
\left(\prod_{r=1}^R \A_{j+r,b_{j+r}}^{-1} \right)
\left(\prod_{s=1}^S \A_{N+1-s,c_{N+1-s}}^{-1} \right) . \nn\ee 

\begin{prop} \label{threenodes}
Suppose the rational $\ell$-weight $\bs f$ is of the form 
\be f_1(u) = q^{\mu B_{11}/2} \frac{1-q^{-\mu B_{11}}au}{1- au}, \quad
    f_j(u) = q^{\kappa B_{jj}/2}\frac{1- q^{-\kappa B_{jj}}bu}{1-  bu},\quad
    f_N(u) = q^{\nu B_{NN}/2}\frac{1- q^{-\nu B_{NN}}cu}{1-  cu},\nn\ee
for some $\mu,\kappa,\nu\in \Cx$, and $f_k(u)=1$ for all $k\notin \{1,j,N\}$. 
Suppose further that 
\be  bq^{-\kappa B_{jj}} \in \{a_j, c_j \}.\label{xass}\ee
Then
\bea \chi_q(\L(\bs f))|_{\leq N} &=& 
\sum_{K=0}^{j-1}\sum_{S=0}^{N-j} \bs f_{K,S} + 
\check\delta_{a_j,bq^{-\kappa B_{jj}}} 
\sum_{K=j}^{N-1}\sum_{S=0}^{N-K} \bs f_{K,S} + 
\check\delta_{c_j,bq^{-\kappa B_{jj}}} 
\sum_{S=N-j+1}^{N-1} \sum_{K=0}^{N-S} \bs f_{K,S}\nn\\ 
&&{} + \check\delta_{a_j,bq^{-\kappa B_{jj}}} \check\delta_{a_N,cq^{-\nu B_{NN}}} \bs f_{N,0} 
     + \check\delta_{c_j,bq^{-\kappa B_{jj}}} \check\delta_{c_1,aq^{-\mu B_{11}}} \bs f_{0,N} \nn\\
&&{} + \delta_{a_j,bq^{-\kappa B_{jj}}} \delta_{c_j,bq^{-\kappa B_{jj}}}
   \bs f_{j,N-j} \label{newterm}\\ 
&&{} +
\sum_{L=0}^{j-2} \sum_{K=0}^{j-1-L}  \sum_{R=0}^{N-j-1} \sum_{S=0}^{N-j-R} \bs f_{K,L,R,S} \nn\\
&&{}+
\check\delta_{b_1,aq^{-\mu B_{11}}} \sum_{R=0}^{N-j-1} \sum_{S=0}^{N-j-R} \bs f_{0,j-1,R,S}
+ \check\delta_{b_N,cq^{-\nu B_{NN}}} \sum_{L=0}^{j-2} \sum_{K=0}^{j-1-L}  \bs f_{K,L,N-j,0}\nn\\
&&{}+ \check\delta_{b_1,aq^{-\mu B_{11}}} \check\delta_{b_N,cq^{-\nu B_{NN}}} \bs f_{0,j-1,N-j,0}.\nn\eea
\end{prop}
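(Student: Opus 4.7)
\medskip

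\noindent\textbf{Proof proposal.} The plan is to extend the argument of Proposition \ref{twonodes} to the three-node support, proceeding by induction on the height $M$ of the lowering (i.e.\ on the number of distinct simple roots subtracted from $\wt(\bs f)$). For each $M \leq N$, build a spanning family of weight vectors by applying to the highest $\ell$-weight vector $v_{0,0,0,0}$ strings of simple lowering operators $x_{i,0}^-$ indexed by a subset of $I$ of size $M$, in any chosen order. Because we restrict to weights in $\wt(\bs f)\overline{W_M}$, no simple lowering operator is applied twice, so we may always commute adjacent lowering operators whose indices are non-adjacent on the Dynkin diagram. This lets us organise the candidate basis according to whether node $j$ has been touched and, if so, according to the partition $(K,L,R,S)$ recording how many lowerings were performed on the four branches $\{1,\ldots,K\}$, $\{j-L,\ldots,j-1\}$, $\{j+1,\ldots,j+R\}$, and $\{N-S+1,\ldots,N\}$.

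When node $j$ has \emph{not} been touched, Lemma \ref{factorlem} applied to the commuting subdiagrams $\{1,\ldots,j-1\}$ and $\{j+1,\ldots,N\}$ factors the span of such vectors as a tensor product of two highest $\ell$-weight $\hat U$-modules, to which I would apply Proposition \ref{twonodes} on each side after further restricting to subdiagrams containing the endpoints. This produces precisely the summands $\sum_{K=0}^{j-1}\sum_{S=0}^{N-j}\bs f_{K,S}$, together with the boundary $\check\delta_{b_1,aq^{-\mu B_{11}}}$ and $\check\delta_{b_N,cq^{-\nu B_{NN}}}$ corrections from that proposition, now indexed to live in the appropriate $(K,L,R,S)$-sector.

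When node $j$ \emph{has} been touched, I would iterate the expansion lemma (Lemma \ref{rk1lem}) outward from node $j$. The hypothesis (\ref{xass}) guarantees that on one of the two arms (say the left, if $a_j=bq^{-\kappa B_{jj}}$), the restriction of $\bs f\bs A_{j,b}^{-1}$ to the subdiagram $\{1,\ldots,j\}$ again satisfies the hypothesis of Proposition \ref{twonodes} with a shifted highest $\ell$-weight; this produces the summand $\check\delta_{a_j,bq^{-\kappa B_{jj}}} \sum_{K=j}^{N-1}\sum_{S=0}^{N-K} \bs f_{K,S}$ and symmetrically on the right. Propagating the expansion lemma outward at each interior node, using Corollary \ref{taucor} to identify the resulting $\ell$-weight at every step, yields the generic interior sum $\sum_{L,K,R,S}\bs f_{K,L,R,S}$ with the advertised ranges and the accompanying boundary corrections at nodes $1$ and $N$.

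The main obstacle will be the ``new'' term $\delta_{a_j,bq^{-\kappa B_{jj}}}\delta_{c_j,bq^{-\kappa B_{jj}}}\bs f_{j,N-j}$ in (\ref{newterm}). In the doubly degenerate regime $a_j=c_j=bq^{-\kappa B_{jj}}$, the rational function $(\bs f\bs A_{j,b}^{-1}\prod_{k<j}\bs A_{k,a_k}^{-1}\prod_{s<N-j+1}\bs A_{N+1-s,c_{N+1-s}}^{-1})_j(u)$ acquires a double pole at $u=a_j^{-1}$, so one must invoke case (iii) of Lemma \ref{rk1lem} at node $j$, which produces two linearly independent candidate lifts $v_a,v_a'$ in the $\ell$-weight space of weight $\bs f_{j,N-j}$. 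The job is then to show that exactly one of these survives after further propagation; equivalently, that $\dim\L(\bs f)_{\bs f_{j,N-j}}=1$ rather than $2$. I would do this by computing $x_{j,s}^+$ and $x_{j\pm 1,s}^+$ actions on the two candidates and using Proposition \ref{stepprop}, together with the two-node proposition applied to $\L(\bs f)$ restricted alternately to $\{1,\ldots,j\}$ and $\{j,\ldots,N\}$, to exhibit the unique surviving combination. Finally, linear independence across sectors of all vectors constructed above is ensured by the fact that they lie in distinct $\ell$-weight spaces, verified using Corollary \ref{taucor} and the explicit form of $\bs A_{i,a}$.
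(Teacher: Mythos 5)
Your overall strategy --- induction on the truncation level, organising monomials into the sectors $(K,S)$ and $(K,L,R,S)$, and propagating the expansion lemma (Lemma \ref{rk1lem}) outward using Corollary \ref{taucor} and Proposition \ref{stepprop} --- is the same route the paper takes (its proof is explicitly ``analogous to Proposition \ref{twonodes}''). The genuine problem is your treatment of the one genuinely new feature, the term $\delta_{a_j,bq^{-\kappa B_{jj}}}\,\delta_{c_j,bq^{-\kappa B_{jj}}}\,\bs f_{j,N-j}$ in (\ref{newterm}). You assert that in the doubly degenerate regime $a_j=c_j=bq^{-\kappa B_{jj}}$ the relevant $j$-component acquires a double pole at $u=a_j^{-1}$, forcing case (iii) of Lemma \ref{rk1lem} and leaving two candidate vectors, exactly one of which must then be shown to survive --- a step you do not carry out. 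This is both a gap and a misdiagnosis. The function to examine is the $j$-component of the parent $\ell$-weight in which every node \emph{except} $j$ has been lowered once (not the monomial you wrote down, which already contains $\A_{j,b}^{-1}$): it is the product of three strings, c.f.\ (\ref{maybethreestrings}) --- the original $f_j(u)$, with numerator $1-q^{-\kappa B_{jj}}bu$ and pole at $b^{-1}$, and the two strings created by $\A_{j-1,a_{j-1}}^{-1}$ and $\A_{j+1,c_{j+1}}^{-1}$, with poles at $a_j^{-1}$ and $c_j^{-1}$. When $a_j=c_j=bq^{-\kappa B_{jj}}$ the numerator of $f_j$ cancels exactly \emph{one} of the two coincident denominators $1-a_ju=1-c_ju$, so what survives has two \emph{distinct simple} poles, at $b^{-1}$ and at $a_j^{-1}$; case (ii) of the expansion lemma then applies and yields the $\ell$-weight $\bs f_{j,N-j}$ with multiplicity $1$ directly. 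No double pole and no case (iii) arise here; indeed the hypothesis (\ref{xass}) is imposed precisely so that, after cancellation, at most two strings remain at node $j$, so your plan to handle a three-string/double-pole configuration there works against the hypothesis rather than with it. (Double poles do occur in this circle of arguments --- e.g.\ in the proof of Proposition \ref{twonodes} when $a_{K+1}=c_{K+1}$ --- but there the multiplicity genuinely is $2$ and is absorbed by the coincidence $\bs f_{K+1,N-K-1}=\bs f_{K,N-K}$; that is not what happens at node $j$.)

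A secondary, more minor point: in the sector where node $j$ is untouched you propose to apply Proposition \ref{twonodes} to each factor $\L(\bs f_{\{1,\dots,j-1\}})$ and $\L(\bs f_{\{j+1,\dots,N\}})$ produced by Lemma \ref{factorlem}; but those highest $\ell$-weights are supported at only one end node of their subdiagrams, so Proposition \ref{twonodes} does not literally apply and you need the (easier) single-end-node version of the same inductive argument. That is cosmetic; the unexecuted and misdirected analysis of the $\delta\delta$ term is the substantive gap.
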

\begin{proof}
This follows from arguments analogous to those used in the proof of Proposition \ref{twonodes}. The monomials $\bs f_{K,S}$ arise by lowering starting at the ends of the Dynkin diagram. The new possibility, as compared to Proposition \ref{twonodes}, is that one can also start to lower from node $j$, giving rise to monomials  $\bs f_{K,L,R,S}$.  
To understand the term $\delta_{a_j,bq^{-\kappa B_{jj}}} \delta_{c_j,bq^{-\kappa B_{jj}}}$ in (\ref{newterm}), note that
\be (\bs f_{j-1,0,0,N-j-1})_j(u) =  
q^{\kappa B_{jj}/2}\frac{1- q^{-\kappa B_{jj}}bu}{1-  bu} 
q^{-B_{j-1,j}} \frac{1-q^{B_{j-1,j}}a_{j-1}u}{1- a_{j} u}
q^{-B_{j,j+1}} \frac{1-q^{B_{j,j+1}} c_{j+1}u}{1-c_{j} u}.\label{maybethreestrings}\ee
When both $a_j=bq^{-\kappa B_{jj}}$ and $c_j=bq^{-\kappa B_{jj}}$ hold, only one of the denominators $1-a_ju=1-c_ju$ cancels, which still leaves one string ending at $a_j=c_j$. Thus $\bs f_{j,N-j}$ appears in $\chi_q(\L(\bs f))$ with multiplicity 1.  
\end{proof}
\begin{rem}
The condition (\ref{xass}) is included in order that, after cancellations, (\ref{maybethreestrings}) is the product of at most two strings. The formula given for $\chi_q(\L(\bs f))|_{\leq N}$ is actually still valid even if (\ref{xass}) is false. One can prove it using a generalization of the expansion lemma, Lemma \ref{rk1lem}, to the case of three strings. We do not need this result. \end{rem}

\begin{cor}\label{threenc}
Given $\mu,\kappa,\nu\in \C$, an affinization $\L(\bs f)$ of $V\left(\overline{\mu \omega_1 + \kappa \omega_j + \nu \omega_N}\right)$ is least if and only if  
\be f_1(u) = q^{\mu B_{11}/2} \frac{1-aq^{-\mu B_{11}}u}{1- au}, \quad
    f_j(u) = q^{\kappa B_{jj}/2}\frac{1- bq^{-\kappa B_{jj}}u}{1-  bu},\quad
    f_N(u) = q^{\nu B_{NN}/2}\frac{1- cq^{-\nu B_{NN}}u}{1-  cu},\nn\ee
for some $a,b,c\in \Cx$ such that either
\begin{enumerate}[(I)]
\item $aq^{-\sum_{i=1}^{j-1} B_{i,i+1}} = bq^{-\kappa B_{jj}}$  and 
$bq^{-\sum_{i=j}^{N-1} B_{i,i+1}} = cq^{-\nu B_{NN}}$, or
\item $aq^{-\mu B_{11}} = bq^{-\sum_{i=1}^{j-1} B_{i,i+1}}$ and 
   $bq^{-\kappa B_{jj}} = cq^{-\sum_{i=j}^{N-1} B_{i,i+1}}$.
\end{enumerate}
\end{cor}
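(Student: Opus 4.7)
The plan is to reduce the classification to the two-node case (Corollary \ref{twonc}) via the restriction lemma, and then use Proposition \ref{threenodes} to eliminate the ``mixed'' combinations of two-node conditions that fail to give a global least affinization.

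First, by Corollary \ref{sl2mincor} each component $f_i(u)$ of the highest $\ell$-weight must be a string, so on weight grounds only $f_1$, $f_j$, $f_N$ are nontrivial and take the stated form, leaving three parameters $a,b,c\in\Cx$ to be constrained. I apply Lemma \ref{reslem} with the subdiagrams $J_1=\{1,\dots,j\}$ and $J_2=\{j,\dots,N\}$, whose diagram subalgebras are themselves of type $\mathrm{ABCFG}$, so Corollary \ref{twonc} applies to each restriction. The first restriction forces either $a_j=bq^{-\kappa B_{jj}}$ (call this condition Ia) or $b_1=aq^{-\mu B_{11}}$ (condition IIa); the second restriction forces either $b_N=cq^{-\nu B_{NN}}$ (condition Ib) or $c_j=bq^{-\kappa B_{jj}}$ (condition IIb). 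The pure combinations Ia+Ib and IIa+IIb are exactly the conditions (I) and (II) of the corollary; the remaining task is to rule out the two mixed combinations.

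For the sufficiency direction, condition (I) ensures $bq^{-\kappa B_{jj}}=a_j$, so hypothesis (\ref{xass}) of Proposition \ref{threenodes} holds; the indicators $\check\delta_{a_j,bq^{-\kappa B_{jj}}}$ and $\check\delta_{b_N,cq^{-\nu B_{NN}}}$ both vanish, killing the two large sums they index, and the resulting truncated $q$-character has the minimum possible weight multiplicities. For condition (II) one can either argue symmetrically via Proposition \ref{threenodes} (now $c_j=bq^{-\kappa B_{jj}}$ satisfies (\ref{xass}) and the other two indicators vanish) or apply Proposition \ref{sameclass} to identify $\L(\bs f)$ under condition (II) with $\L(\bs f^\dag)$ under condition (I) up to reparametrization.

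For the necessity direction I analyze the mixed cases. In case Ia+IIb one has $a_j=c_j=bq^{-\kappa B_{jj}}$; the indicators $\check\delta_{a_j,\cdot}$ and $\check\delta_{c_j,\cdot}$ still both vanish, but now the product $\delta_{a_j,\cdot}\delta_{c_j,\cdot}=1$ contributes the extra monomial $\bs f_{j,N-j}$ from line (\ref{newterm}), strictly increasing $\dim \L(\bs f)_{\wt(\bs f)\,\overline{\sum_{i=1}^N\alpha_i}^{\,-1}}$ compared with the pure case (I). Case IIa+Ib falls outside hypothesis (\ref{xass}), but the remark following Proposition \ref{threenodes} asserts the formula is still valid; then $\check\delta_{a_j,\cdot}$ and $\check\delta_{c_j,\cdot}$ both survive and contribute a family of extra monomials not present under either pure condition, again yielding a strictly larger weight space. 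The principal obstacle is therefore the careful bookkeeping of which monomials survive in each of the four cases, and in particular the explicit identification of the ``middle'' term (\ref{newterm}) as the unambiguous signature of the double coincidence $a_j=c_j=bq^{-\kappa B_{jj}}$, which cannot appear under either pure condition.
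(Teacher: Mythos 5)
Your proposal follows essentially the same route as the paper: restrict to the two overlapping two-node subdiagrams via Lemma \ref{reslem} and Corollary \ref{twonc} to cut the candidates down to the four combinations of your conditions Ia/IIa with Ib/IIb, identify the two pure combinations as a single equivalence class via $\bs f\mapsto\bs f^\dag$ and Proposition \ref{sameclass}, and use the explicit formula of Proposition \ref{threenodes} to show that the two mixed combinations strictly succeed that class. Two points need tightening. First, your treatment of the mixed case IIa${}+{}$Ib leans on the remark following Proposition \ref{threenodes}, which the paper explicitly declines to prove (``We do not need this result''); as written this is a gap. It is avoidable: the dagger involution exchanges Ia with IIa and Ib with IIb (the same computation you already use to identify (I) with (II)), so IIa${}+{}$Ib is equivalent as a $\uqg$-module to Ia${}+{}$IIb, and it suffices to dispose of the latter, which does satisfy hypothesis (\ref{xass}). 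Second, in the case Ia${}+{}$IIb the change relative to case (I) is not simply the appearance of the single monomial $\bs f_{j,N-j}$: passing from $\check\delta_{c_j,bq^{-\kappa B_{jj}}}=1$ to $0$ also \emph{removes} terms that contributed at the weight $\wt(\bs f)\,\overline{\alpha_1+\dots+\alpha_N}^{-1}$ under (I), while $\check\delta_{b_N,cq^{-\nu B_{NN}}}$ switching from $0$ to $1$ restores others, and one must count to see that the net effect is an increase by exactly one, with the term (\ref{newterm}) supplying that surplus. The conclusion is correct, but the bookkeeping is more involved than ``one extra term appears''; with these two repairs your argument coincides with the paper's.
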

\begin{proof}
If $\L(\bs f)$ obeys (I) then then $L(\bs f^\dag)$ obeys (II), so by Proposition \ref{sameclass} both (I) and (II) define the same equivalence class. It follows from Proposition \ref{threenodes} that all other equivalence classes of affinizations of  $V(\overline{\mu\omega_1 + \kappa\omega_N})$ strictly succeed this class. 
So it is indeed least. Note in particular that it strictly precedes the class defined by
\begin{enumerate}[(I)]
\item[(III)] $aq^{-\sum_{i=1}^{j-1} B_{i,i+1}} = bq^{-\kappa B_{jj}}$ and 
   $bq^{-\mu B_{jj}} = cq^{-\sum_{i=j}^{N-1} B_{i,i+1}}$, or
\item[(IV)] $aq^{-\mu B_{11}} = bq^{-\sum_{i=1}^{j-1} B_{i,i+1}}$ and 
$bq^{-\sum_{i=j}^{N-1} B_{i,i+1}} = cq^{-\nu B_{NN}}$.
\end{enumerate}
By Proposition \ref{twonodes}, (I-IV) are the only affinizations that are least for the subdiagrams $\{1,\dots, j\}$ and $\{j,\dots, N\}$. The term (\ref{newterm}) vanishes in cases (I) and (II) but not in cases (III) and (IV). 
\end{proof}

\subsection{Proof of Theorem \ref{lathm}}\label{gencase}
First we restate Theorem \ref{lathm} in the following form. We pick and fix, for this subsection, a $\lambda\in \h^*\setminus\{0\}$. 
Let $i_1< i_2 <\dots< i_K$ be such that $\lambda= \sum_{k=1}^K b_{k} \omega_{i_k}$, $b_{k}\neq 0$. 
Then we must show that an affinization $\L(\bs f)$ of $V(\overline \lambda)$ is least if and only if 
\be f_{i_k}(u) = q^{b_k B_{i_ki_k}/2} \frac{1-  q^{- b_k B_{i_ki_k}} a_k u}{1-a_k u}, \qquad 1\leq k\leq K,\label{gf}\ee
and $f_i(u)=1$ for all $i\in I\setminus \{i_1,\dots,i_K\}$, where $a_k\in \Cx$, $1\leq k\leq K$, are such that either
\begin{enumerate}[(I)]
\item $a_{k+1}q^{- b_{k+1} B_{i_{k+1},i_{k+1}}} = a_kq^{-\sum_{i=i_k}^{i_{k+1}-1}B_{i,i+1}}$ for all $1\leq k<K$ or
\item $a_{k}q^{- b_{k} B_{i_{k},i_{k}}} = a_{k+1}q^{-\sum_{i=i_k}^{i_{k+1}-1}B_{i,i+1}}$ for all $1\leq k<K$.
\end{enumerate}
Now, in view of Corollaries \ref{twonc} and \ref{threenc}, this statement is equivalent to the following   proposition.

\begin{prop}\label{threenodeprop}
An affinization $\L(\bs f)$ of $V(\overline \lambda)$ is least if and only if 
\begin{enumerate}
\item $\L(\bs f_{\{i_k\}})$ is least for each $1\leq k\leq K$, and
\item $\L(\bs f_{\{i_k,i_k+1,\dots,i_{k+1}\}})$ is least for each $1\leq k\leq K-1$, and 
\item $\L(\bs f_{\{i_k,i_k+1,\dots,i_{k+2}\}})$ is least for each $1\leq k\leq K-2$.
\end{enumerate}
\end{prop}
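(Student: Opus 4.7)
The $(\Rightarrow)$ direction is immediate from the Restriction Lemma (Lemma \ref{reslem}). If $\L(\bs f)$ is a least affinization of $V(\overline\lambda)$, then the $\hat U_J$-submodule through the highest $\ell$-weight vector (which is isomorphic to $\L(\bs f_J)$) is a least affinization of $V(\overline\lambda_J)$ for every subdiagram $J \subseteq I$. Taking $J$ in turn to be each singleton $\{i_k\}$, each subdiagram $\{i_k, i_k+1, \dots, i_{k+1}\}$, and each subdiagram $\{i_k, i_k+1, \dots, i_{k+2}\}$ yields conditions (1), (2), and (3) respectively.

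For $(\Leftarrow)$, assuming (1), (2), (3), I would first deduce that $\bs f$ has the explicit form stated in Theorem \ref{lathm}. Condition (1) together with Corollary \ref{sl2mincor} forces each $f_{i_k}$ to be a string, which combined with $\wt(\bs f) = \overline\lambda$ pins down the form \eqref{gf}. Condition (2) together with Corollary \ref{twonc} applied to each consecutive pair of support nodes $(i_k, i_{k+1})$ forces $f_j(u) = 1$ for intermediate nodes, and produces one of two possible spectral-parameter relations between $a_k$ and $a_{k+1}$, which I shall call types $\mathrm{I}_k$ and $\mathrm{II}_k$. Condition (3) together with Corollary \ref{threenc} applied to each consecutive triple rules out the mixed combinations $(\mathrm{I}_k, \mathrm{II}_{k+1})$ and $(\mathrm{II}_k, \mathrm{I}_{k+1})$. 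A straightforward induction on $k$ along the chain $i_1 < \dots < i_K$ then fixes a single global type, either $\mathrm{I}$ throughout or $\mathrm{II}$ throughout, giving exactly the form of Theorem \ref{lathm}.

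It remains to verify that an $\L(\bs f)$ of this form is actually least, not merely minimal. My strategy is contrapositive: suppose $\L(\bs g)$ is another affinization of $V(\overline\lambda)$ whose class does not weakly succeed that of $\L(\bs f)$. I would use the swap construction underlying the proof of Lemma \ref{reslem} -- in which $\bs f$ is modified by replacing $\bs f_J$ with $\bs g_J$ on a subdiagram $J$ -- to localize the failure to a $J$ of size at most three, obtaining that $\L(\bs g_J)$ does not weakly succeed $\L(\bs f_J)$. But the latter are least by Corollaries \ref{sl2mincor}, \ref{twonc}, \ref{threenc}, a contradiction. The localization itself would be carried out using the explicit top-$q$-character formulas of Propositions \ref{twonodes} and \ref{threenodes}, which make the comparison on small subdiagrams fully computable.

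The main obstacle is this last step, namely converting a dimension discrepancy at some $\uqg$-weight space of the full module into one on an appropriate $\leq 3$-node subdiagram. The complication is that the weight spaces of $\L(\bs f)$ are not exhausted by the $\hat U_J$-submodule $\L(\bs f_J)$, so contributions from vectors not reachable from the highest $\ell$-weight vector by $J$-generators alone must be controlled. This is where the restriction to types $\mathrm{ABCFG}$ (linear Dynkin diagrams with no trivalent node) becomes essential: it ensures that the chain of relations $a_k \leftrightarrow a_{k+1}$ propagates unambiguously along the single chain of support nodes, so that a localized modification on a 3-node window affects $q$-character multiplicities only on the expected cone, making the swap argument effective.
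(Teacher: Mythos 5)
Your forward direction and the reduction of conditions (1)--(3) to the explicit form of $\bs f$ in Theorem \ref{lathm} match the paper. The gap is in the core of the ``if'' direction. You propose a contrapositive: assume a competitor $\L(\bs g)$ does not weakly succeed $\L(\bs f)$, then ``localize the failure'' to a subdiagram with at most three support nodes and contradict Corollaries \ref{sl2mincor}, \ref{twonc}, \ref{threenc}. But you never supply a mechanism for this localization, and the tool you cite --- the swap construction inside Lemma \ref{reslem} --- runs in the opposite direction: it promotes a local improvement to a global one, it does not convert a global dimension discrepancy (at a weight $\lambda-\alpha$ whose support may span the whole diagram) into a discrepancy on a small window. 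You correctly identify this as ``the main obstacle,'' but the resolution you sketch (that in types ABCFG ``a localized modification on a 3-node window affects $q$-character multiplicities only on the expected cone'') is an assertion, not an argument; it is essentially the statement to be proved.

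The paper avoids localization entirely. For an arbitrary competitor $\L(\bs s)$ it defines the sets $A^{(2)}$, $A^{(3)}$ of windows on which the restriction of $\bs s$ fails to be least. If both are empty, $\bs s$ has the same form as $\bs f$ and the two are $\uqg$-isomorphic, hence in the same class. Otherwise, Propositions \ref{twonodes} and \ref{threenodes} give a \emph{strict} dimension increase for $\L(\bs s)$ at each bad window weight $\lambda-\alpha_{i_k}-\cdots-\alpha_{i_{k+1}}$ (or $\cdots-\alpha_{i_{k+2}}$), and --- this is the step entirely absent from your proposal --- Lemma \ref{factorlem} shows that at every weight \emph{not} dominated by a bad window weight, the weight spaces of $\L(\bs s)$ and $\L(\bs f)$ have \emph{equal} dimension, because such a weight decomposes over pairwise commuting subdiagrams $I_1,\dots,I_T$ avoiding all bad windows, on which $\L(\bs s_{I_t})\cong\L(\bs f_{I_t})$ as $\uqg$-modules and the weight space factorizes as a tensor product. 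This factorization is precisely what controls the contributions you worry about from vectors not generated by a single $\hat U_J$, and together with the definition of the partial order it yields directly that $\L(\bs f)$ weakly (indeed strictly) precedes $\L(\bs s)$. Without Lemma \ref{factorlem} or an equivalent substitute, your argument does not close.
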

\begin{proof}
The ``only if'' part follows from Lemma \ref{reslem} and Corollaries \ref{sl2mincor}, \ref{twonc} and \ref{threenc}. 

For the ``if'' part, suppose $\bs f$ is such that conditions (1--3) hold. 
Then $\bs f$ is of the form given in (\ref{gf}).

When $K=1$ for every weight $\overline\mu\neq \overline\lambda$ of $\L(\bs f)$, then $\overline\mu \leq \overline{\lambda-\alpha_{i_1}}$ and the result is clear. So suppose that $\lambda$ has support at $K\geq 2$ nodes.
Let $\L(\bs s)$, $\bs s\in \mc R$, be any affinization of $V(\overline \lambda)$ and $w$ a highest $\ell$-weight vector of $\L(\bs s)$. 
Define 
\bea A^{(2)} &:=& \left\{ k \in \{1,2,\dots,K-1\} : \hat U_{\{i_k,i_{k}+1,\dots, i_{k+1}\}} \on w \text{ is not least}\right\} \nn,\\
 A^{(3)} &:=& \left\{ k \in \{1,2,\dots,K-2\} : \hat U_{\{i_k,i_{k}+1,\dots, i_{k+2}\}} \on w \text{ is not least}\right\}. \nn\eea

We first make the following observation:
\be\text{If $A^{(2)}=A^{(3)}=\emptyset$ then $\L(\bs s)$ and $\L(\bs f)$ are isomorphic as $\uqg$-modules.} \label{claim}\ee
Indeed, by Lemma \ref{reslem} and Corollaries \ref{twonc} and \ref{threenc},  $A^{(2)}=A^{(3)}=\emptyset$ holds only if $\bs f$ and $\bs s$ are of the form given in (\ref{gf}). Then, as noted in  \S\ref{ssthm}, there exists an $a\in \Cx$ such that either $\bs f = t_a(\bs f')$ or $\bs f \cong  t_a(\bs f'^\dag)$, c.f. (\ref{dagdef}), and therefore (\ref{claim}) follows from Proposition \ref{sameclass}.

Now we consider the case that $A^{(2)}\neq \emptyset$ or $A^{(3)}\neq\emptyset$. We shall show that the class of $\L(\bs f)$ strictly precedes that of $\L(\bs s)$ in the partial order.
By Proposition \ref{twonodes}, for all $k\in A^{(2)}$
\be\dim\left(\L(\bs s)
_{\overline{\lambda-\alpha_{i_k}-\alpha_{i_{k}+1}-\ldots-\alpha_{i_{k+1}}}}\right) 
> \dim\left(\L(\bs f)
_{\overline{\lambda-\alpha_{i_k}-\alpha_{i_{k}+1}-\ldots-\alpha_{i_{k+1}}}}\right).\nn\ee
By Proposition \ref{threenodes}, for all $k\in A^{(3)}$ 
\be \dim\left(\L(\bs s)
_{\overline{\lambda-\alpha_{i_k}-\alpha_{i_{k}+1}-\ldots-\alpha_{i_{k+2}}}}\right) 
> \dim\left(\L(\bs f)
_{\overline{\lambda-\alpha_{i_k}-\alpha_{i_{k}+1}-\ldots-\alpha_{i_{k+2}}}}\right). \nn\ee
It remains to compare weight spaces with weights that are not dominated by weights of the form $\lambda-\alpha_{i_k}-\alpha_{i_{k}+1}-\ldots-\alpha_{i_{k+1}}$, $k\in A^{(2)}$, or $\lambda-\alpha_{i_k}-\alpha_{i_{k}+1}-\ldots-\alpha_{i_{k+2}}$, $k\in A^{(3)}$.
Let $\mu$ be any such weight. Then there exist  simple, pairwise commuting, diagram subalgebras $\g_1,\dots,\g_T$ of $\g$ with corresponding subdiagrams $I_1,\dots, I_T$, and elements $\alpha^{(t)}\in Q_{I_t}^+$ for each $1\leq t\leq T$, such that $\mu = \lambda - \sum_{t=1}^T \alpha^{(t)}$ and such that, for each $1\leq t\leq T$, $\{i_k,i_k+1,\dots i_{k+1}\}\not\subseteq I_t$ for all $k\in A^{(2)}$ and  $\{i_k, i_k+1,\dots,i_{k+2}\}\not\subseteq I_t$ for all $k\in A^{(3)}$. It follows from the observation (\ref{claim}) above that for each $1\leq t\leq T$, $\L(\bs s_{I_t})$  is isomorphic to $\L(\bs f_{I_t})$ as a $\uqg$-module. 
Therefore, by Lemma \ref{factorlem}, we have:
\bea \dim\left(\L(\bs s)_{\overline\mu}\right) &=& \dim\left(\L(\bs s_{I_1})_{\overline{\mu_{I_1}}} \otimes \dots \L(\bs s_{I_T})_{\overline{\mu_{I_T}}} \right)\nn\\ &=& \prod_{t=1}^T \dim\left( \L(\bs s_{I_t})_{\overline{\mu_{I_t}}} \right) = \prod_{t=1}^T \dim\left( \L(\bs f_{I_t})_{\overline{\mu_{I_t}}} \right)
\nn\\ &=&  \dim\left(\L(\bs f_{I_1})_{\overline{\mu_{I_1}}} \otimes \dots \L(\bs f_{I_T})_{\overline{\mu_{I_T}}} \right) = \dim\left(\L(\bs f)_{\overline{\mu}}\right).\nn\eea
Hence the class of $\L(\bs f)$ strictly precedes that of $\L(\bs s)$ in the partial order, as required.
\end{proof}

\section{Character conjectures}
In this section we give a series of three conjectures, of increasing generality, for the classical (i.e. $\uqg$-) character of certain irreducible representations in $\hat\cat$. 
Our main interest is in the least affinizations of Verma modules, and these provide our starting point.
Computer experiments, using the algorithm of \cite{FM}, suggest that their characters have a simple form, similar to the Weyl denominator.

\begin{conj}[Least affinization of the generic Verma module]\label{lagv} 
Suppose $\g$ is of type $\mathrm{ABCF}$ or $\mathrm G$. Let $\L(\bs f)\in\Ob\hat\cat$ be a least affinization of $V(\overline\lambda)$, where $\lambda=\sum_{i\in I} \lambda_i\omega_i$ with $\lambda_i\notin \Z$ for any $i\in I$. Then
\be\nn \chi(\L(\bs f)) = \overline{\lambda} \prod_{\alpha\in \Delta^+} \left(\frac{1}{1-\overline{\alpha}^{-1}}\right)^{\max\limits_{i\in I} \left<\omega^\vee_i,\alpha\right>}. \ee
\end{conj}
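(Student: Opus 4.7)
The plan is to combine analytic continuation with a reduction to known character formulae in the stable large-weight limit. First, by Corollary \ref{accor}, the normalised character $\widetilde\chi(\L(\bs f)) = \overline\lambda^{-1}\chi(\L(\bs f))$ agrees, on the open dense set of generic $\lambda$, with any suitable limit through integer weights. Both sides of the conjectured identity are constant on this set (the right-hand side manifestly so), so it suffices to verify the identity on a convenient one-parameter subfamily. I would take the family $\lambda^{(n)} = n\sum_{i\in I}\omega_i + \lambda^{(0)}$ for a fixed generic $\lambda^{(0)}\in \h^*$ and $n\in \N$, chosen so that the linear constraints of Theorem \ref{lathm} on the spectral parameters $a_1,\dots,a_K$ remain rationally consistent as $n$ varies. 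Letting $n\to \infty$ through integers, each $\lambda_i^{(n)}$ becomes a large positive integer, so $\L(\bs f^{(n)})$ is an ordinary minimal affinisation of the finite-dimensional $\uqg$-module $V(\overline{\lambda^{(n)}})$, and one has reduced to a stable limit in the spirit of \cite{HJ}.

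Second, to identify this stable limit with the conjectured product, I would exploit the product structure already visible in Corollaries \ref{twonc} and \ref{threenc}: each two-node and three-node subdiagram supplies one string per edge of the Dynkin diagram restricted to the support of $\lambda$, and these strings combine across the full diagram into the sequence $(a_1,\dots,a_K)$ of Theorem \ref{lathm}. Factorising the large-$n$ $q$-character into rank-one pieces, one obtains one independent lowering direction per positive root $\alpha$ for each ``level'' $1,\dots,m_\alpha$. This matches the combinatorial prediction of the formula, in which the exponent $m_\alpha=\max_i\langle \omega_i^\vee,\alpha\rangle$ counts the number of simple-root subtractions required to reach the deepest simple-root coefficient of $\alpha$.

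Third, to make ``one independent lowering direction'' rigorous, I would use Proposition \ref{stepprop} and Lemma \ref{rk1lem} to run the Frenkel-Mukhin algorithm step by step. At each step the dimensions of the new $\ell$-weight spaces are controlled by the expansion lemma, and the compatibility of least affinisations under restriction to subdiagrams of size one, two and three (Proposition \ref{threenodeprop}) determines exactly which cancellations of the form $a_j = bq^{-\kappa B_{jj}}$ occur at each height. Induction on $\hgt(\eta)$, with the inductive hypothesis stating that the multiplicity of $\overline{\lambda-\eta}$ coincides with the coefficient of $\overline\eta{}^{-1}$ in $\prod_{\alpha}(1-\overline\alpha^{-1})^{-m_\alpha}$, would then propagate the formula through the full character. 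The base cases $\hgt(\eta) \leq 3$ are covered by Propositions \ref{twonodes} and \ref{threenodes} applied to every connected three-node subdiagram of the support of $\lambda$.

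The main obstacle will be controlling the inductive step for roots $\alpha$ of large height in types where the root system has coefficients $m_\alpha \geq 2$ (that is, in types $\mathrm{BCFG}$, and for long roots in $\mathrm G_2$ even $m_\alpha = 3$). In these cases the conjecture predicts several independent $\bs A_{i,a}^{-1}$-operators with \emph{coinciding} spectral parameters, which is the delicate double-pole regime of Lemma \ref{rk1lem}(iii). Ensuring that each such coincidence produces exactly the predicted multiplicity — neither more, through spurious new $\ell$-weights, nor fewer, through unexpected cancellations forced by the quantum Serre relations — is the heart of the problem, and is also why the authors report complete proofs only in types $A_n$ and $B_2$, where either no double poles arise or the Dynkin diagram is short enough to be treated by hand.
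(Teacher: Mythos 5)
The statement you are proving is a \emph{conjecture} in the paper: the authors do not prove it in general, and they establish it only in types $\mathrm A_n$ and $\mathrm B_2$ (Proposition \ref{c1p}), precisely by the analytic-continuation reduction that forms your first step --- Corollary \ref{accor} reduces the generic case to large integer highest weights, where the $\uqg$-module decomposition of the minimal affinization is already known from \cite{Cminaffrank2}. That part of your plan is sound and is exactly what the paper does where it can. The gap is in your second and third steps, and it is the same gap that keeps the statement a conjecture. The reduction to the finite-dimensional stable limit only helps if you know the $\uqg$-character of the finite-dimensional minimal affinization, and outside of type A, rank $2$, and a few low-rank type B and G cases, these decompositions are themselves unknown (this is the open problem motivating the paper). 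Your proposed substitute --- running the Frenkel--Mukhin algorithm inductively on $\hgt(\eta)$ with Proposition \ref{stepprop} and Lemma \ref{rk1lem} --- does not close this: the FM algorithm is not known to produce the correct $q$-character of a general irreducible module (the paper uses it only for computer experiments), and Lemma \ref{rk1lem} handles at most two strings per node, whereas the multiplicities $m_\alpha\geq 2$ you need to control arise exactly from products of three or more $\bs A^{-1}_{i,a}$ with related spectral parameters.

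A concrete error: your claimed base case is wrong. Propositions \ref{twonodes} and \ref{threenodes} compute $\chi_q(\L(\bs f))|_M$, which is the truncation to weights in $\wt(\bs f)\overline{W_M}$, i.e.\ weights obtained by lowering \emph{at most once in each simple direction}. This is not the set of all weights $\overline{\lambda-\eta}$ with $\hgt(\eta)\leq 3$; for instance $\lambda-2\alpha_i$ has height $2$ but lies in no $W_M$, and it is precisely at such weights (where repeated lowering in one direction interacts with the exponents $m_\alpha\geq 2$) that the conjectural formula first becomes nontrivial in types BCFG. So the induction has no verified starting point beyond what Proposition \ref{c1p} already covers. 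You correctly identify the double-pole multiplicity control as ``the heart of the problem,'' but identifying the obstacle is not the same as overcoming it: as written, the proposal is a programme consistent with the paper's evidence, not a proof.
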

This conjecture is known to hold in at least two special cases:
\begin{prop}\label{c1p} Conjecture \ref{lagv} is true in types $\mathrm A_n$, $n\in \Z_{\geq 1}$, and $\mathrm B_2$.\end{prop}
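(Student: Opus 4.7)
The plan is to treat the two cases separately. For type $\mathrm A_n$, I would exploit Jimbo's evaluation homomorphism $\ev_a: \uq(\widehat{\mf{sl}}_{n+1}) \to \uq(\mf{sl}_{n+1})$, generalizing the $\mf{sl}_2$ case of \S\ref{sl2sec}. The pullback $V(\overline\lambda)_a := \ev_a^*(V(\overline\lambda))$ is a $\uq(\widehat{\mf{sl}}_{n+1})$-module in $\hat\cat$; it is irreducible because the Hopf-subalgebra inclusion $\uq(\mf{sl}_{n+1}) \hookrightarrow \uq(\widehat{\mf{sl}}_{n+1})$ forces every $\uq(\widehat{\mf{sl}}_{n+1})$-submodule to be a $\uq(\mf{sl}_{n+1})$-submodule, and by construction its $\uqg$-character equals $\chi(V(\overline\lambda))$. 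The first step is to compute the highest $\ell$-weight of $V(\overline\lambda)_a$ and to verify, against the characterization in Theorem \ref{lathm}, that it is a least affinization (for generic $a$). In type $\mathrm A_n$ every positive root has all simple-root coefficients equal to $0$ or $1$, so $m_\alpha = 1$ for every $\alpha \in \Delta^+$ and the conjectural formula reduces to the Verma character $e^\lambda \prod_{\alpha\in\Delta^+}(1-e^{-\alpha})^{-1}$. Since no $\lambda_i$ is an integer, the Verma module $M(\lambda)$ is irreducible and agrees with $V(\overline\lambda)$, whose character is precisely this.

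\medskip

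For type $\mathrm B_2$ no such evaluation homomorphism exists, and the conjectural character is \emph{not} the Verma character: taking $\alpha_1$ long and $\alpha_2$ short, the positive roots are $\alpha_1, \alpha_2, \alpha_1+\alpha_2, \alpha_1+2\alpha_2$ with $m_\alpha = 1, 1, 1, 2$ respectively, so the formula carries an extra factor $(1-e^{-(\alpha_1+2\alpha_2)})^{-1}$ compared to the Verma character. My plan is to reduce to the finite-dimensional case via Corollary \ref{accor}. For $\lambda_1, \lambda_2 \in \Z_{\geq 0}$ the least affinization is finite-dimensional by Theorem \ref{lathm}, and its $\uqg$-character is available either from Chari's rank-$2$ classification \cite{Cminaffrank2} or from the Frenkel--Mukhin algorithm \cite{FM}. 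The task is to verify that the normalized character $\overline\lambda^{-1}\chi(\L(\bs f))$, regarded as an element of $\Z[[e^{-\alpha_1}, e^{-\alpha_2}]]$, stabilizes coefficient-by-coefficient to $\prod_{\alpha \in \Delta^+} (1-e^{-\alpha})^{-m_\alpha}$ as $\lambda_1, \lambda_2 \to \infty$ through nonnegative integers; Corollary \ref{accor} then transfers this stable limit to the generic case.

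\medskip

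The main obstacle is the $\mathrm B_2$ computation: unlike type $\mathrm A_n$, the limiting character is not a Verma character, so one must carefully track the explicit finite-dimensional formula as $\lambda_1, \lambda_2 \to \infty$ and in particular identify the origin of the doubled factor at $\alpha_1+2\alpha_2$. Once stability is established, the verification reduces to a finite check at each fixed $(\alpha_1,\alpha_2)$-depth, for which either the closed formula of \cite{Cminaffrank2} or a direct run of the Frenkel--Mukhin algorithm on the highest $\ell$-weight prescribed by Theorem \ref{lathm} would suffice. The type $\mathrm A_n$ case, by contrast, requires only the relatively mild identification of the highest $\ell$-weight of the evaluation module with the one singled out by Theorem \ref{lathm}, which is a direct computation in Jimbo's formalism.
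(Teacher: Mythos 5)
Your approach coincides with the paper's in both cases: type $\mathrm A_n$ is handled via evaluation modules (so the least affinization is the irreducible Verma module as a $\uqg$-module, whose Weyl-denominator character is exactly the conjectured formula since $\max_i\left<\omega_i^\vee,\alpha\right>=1$ for every positive root), and type $\mathrm B_2$ via analytic continuation (Corollary \ref{accor}) from the finite-dimensional case using the explicit decomposition of \cite{Cminaffrank2}. The computation you flag as the main obstacle is resolved in the paper in one line: Chari's decomposition $\L(\bs f)\cong\bigoplus_{i=0}^{\lfloor\ell/2\rfloor}V(\overline{k\omega_1+(\ell-2i)\omega_2})$ stabilizes to an infinite sum of Verma characters shifted by $-2i\omega_2$, and since $2\omega_2=\alpha_1+2\alpha_2$ the resulting geometric series in $\overline{\alpha_1}^{-1}\,\overline{\alpha_2}^{-2}$ produces precisely the squared factor $\left(1-\overline{\alpha_1}^{-1}\overline{\alpha_2}^{-2}\right)^{-2}$.
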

\begin{proof}In type A, least affinizations are evaluation modules. So the least affinization $\L(\bs f)$ of an irreducible Verma module $V$ is isomorphic to $V$ as a $\uqg$-module. The formula for $\chi(\L(\bs f))$ in Conjecture \ref{lagv} is therefore correct, because it agrees with the usual character formula for Verma modules, i.e. the Weyl denominator. (Note that in type A,  $\max_{i\in I}\left<\omega_i^\vee,\alpha\right> = 1$ for all $\alpha\in \Delta^+$.)

In type $\mathrm B_2$, for all $k,\ell\in \Z_{\geq 0}$ the least affinization $\L(\bs f)$ of $V(\overline{k\omega_1+\ell\omega_2})$ has as its $\uqg$-module decomposition  \cite{Cminaffrank2}
\be\nn \L(\bs f)\cong \bigoplus_{i=0}^{\lfloor\ell/2\rfloor} V(\overline{k\omega_1 + (\ell-2i)\omega_2})\ee
(with $\alpha_1$ the long root). By analytic continuation, c.f. Corollary \ref{accor}, the least affinization $\L(\bs f)$ of the generic $\lambda\in \h^*$ has character
\be\nn\chi(\L(\bs f)) =  \sum_{i=0}^\8 \chi\left(V\left(\overline{\lambda-2i\omega_2}\right)\right) = \overline{\lambda} \sum_{i=0}^\8 \left(\overline{\omega_2}\right)^{-2i} \frac1{1-\overline\alpha_1^{-1}} \frac1{1-\overline\alpha_2^{-1}}
\frac1{1-\overline\alpha_1^{-1}\overline\alpha_2^{-1}} \frac1{1-\overline\alpha_1^{-1}\overline\alpha_2^{-2}}
\ee 
using the usual character formula for Verma modules. Since $2\omega_2 = \alpha_1+2\alpha_2$, on summing the geometric series one has
\be\nn\chi(\L(\bs f)) = \overline{\lambda} 
\frac1{1-\overline\alpha_1^{-1}} \frac1{1-\overline\alpha_2^{-1}}
\frac1{1-\overline\alpha_1^{-1}\overline\alpha_2^{-1}} \left(\frac1{1-\overline\alpha_1^{-1}\overline\alpha_2^{-2}}\right)^2.
\ee 
as in Conjecture \ref{lagv}.\end{proof}

Conjecture \ref{lagv} is the special case $J=I$ of the following. 
\begin{conj}[Least affinization of the generic parabolic Verma module]\label{lagpvm} 
Suppose $\g$ is of type $\mathrm{ABCF}$ or $\mathrm G$. Suppose $\lambda\in \h^*$ has support on some subdiagram $J\subseteq I$ (i.e. $\lambda = \sum_{j\in J} \lambda_j \omega_j$) and that $\lambda_j\notin \Z$ for any $j\in J$. 
Let $\L(\bs f)\in\Ob\hat\cat$ be a least affinization of $V(\overline\lambda)$. Then
\be\nn \chi(\L(\bs f)) = \overline{\lambda} \prod_{\alpha\in \Delta^+} \left(\frac{1}{1-\overline{\alpha}^{-1}}\right)^{\max\limits_{j\in J} \left<\omega^\vee_j,\alpha\right>}. \ee
\end{conj}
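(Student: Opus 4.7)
The natural plan is to induct on $|J|$, reducing to the Kirillov--Reshetikhin case ($|J|=1$) via the restriction lemma. As a preliminary step, Corollary \ref{accor} allows us to vary the $\lambda_j$ within $\C\setminus\Z$ freely: the normalized character $\overline\lambda^{-1}\chi(\L(\bs f))$ is constant on such a dense set (on any prescribed finite range of weights), and so is the right-hand side of the conjecture. We may therefore pick convenient generic values of $\lambda$ whenever it is useful.

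The base case $|J|=1$ is the statement that a generic Kirillov--Reshetikhin module $\L(\bs f)$ at node $j$ has character
\beu \overline\lambda \prod_{\alpha\in\Delta^+}\left(\frac{1}{1-\overline\alpha^{-1}}\right)^{\langle\omega_j^\vee,\alpha\rangle}. \eeu
By analytic continuation back from $\lambda_j\in\Z_{\geq 0}$ this is the stable $\lambda_j\to\infty$ limit of the finite-dimensional KR-module characters, which is exactly the information packaged in \cite{HJ} (after translating from their modified affine object to the standard one, in the spirit of Proposition \ref{c1prop}). Verifying this translation and checking the product formula in each classical type is the content of the base step.

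For $|J|\geq 2$, pick $j_0\in J$ and split $J=J_1\sqcup\{j_0\}\sqcup J_2$ according to the two connected components of $I\setminus\{j_0\}$ (one of these subsets may be empty). By the restriction lemma (Lemma \ref{reslem}) and the explicit form of $\bs f$ given by Theorem \ref{lathm}, the $\hat U_{J_1\cup\{j_0\}}$- and $\hat U_{\{j_0\}\cup J_2}$-submodules generated by a highest $\ell$-weight vector are themselves least affinizations, whose characters are provided by the inductive hypothesis. The conjectured product factors naturally according to which $j\in J$ realizes the maximum $m_\alpha=\max_{j\in J}\langle\omega_j^\vee,\alpha\rangle$ for each positive root $\alpha$, and one would attempt to reconstruct the full character by combining the sub-characters via the expansion lemma (Lemma \ref{rk1lem}) and an extension of Proposition \ref{threenodes} to all levels.

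\textbf{Main obstacle.} The chief difficulty is controlling roots $\alpha$ whose coefficient vector $(\langle\omega_j^\vee,\alpha\rangle)_{j\in J}$ is not monotone, so that several distinct $j\in J$ could \emph{a priori} contribute their own lowering directions. The conjecture asserts that only the maximum $m_\alpha$ contributes, which amounts to a nontrivial collapse of $\ell$-weight spaces. Propositions \ref{twonodes}--\ref{threenodes} establish this collapse only in the top $N$ levels of the $q$-character and only for two or three active nodes; extending the argument to all depths seems to require either a new global generating-function identity for the normalized $q$-character of a least affinization, or a Hilbert-series interpretation in which the exponents $m_\alpha$ arise naturally as the ranks of free generating spaces. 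Providing such a global argument, beyond the local rank-two and three-node computations, is presumably why the authors only offer partial verification and state the general case as a conjecture.
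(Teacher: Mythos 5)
The statement you are addressing is stated in the paper as a \emph{conjecture}, and the paper does not prove it. What the paper does is: (a) verify it in special cases --- type $\mathrm A_n$, type $\mathrm G_2$ Kirillov--Reshetikhin modules on the long node, and type $\mathrm B_n$ with $J\subseteq\{1,2,3\}$ --- by quoting known $\uqg$-module decompositions of the corresponding finite-dimensional minimal affinizations (Chari, Chari--Moura, Moura) and summing geometric series after analytic continuation via Corollary \ref{accor}; and (b) show that it would follow from the more general Conjecture \ref{charconj}, the reduction resting on the combinatorial identity $\sum_{k}\max\left(0,\left<\omega^\vee_{i_k},\alpha\right>-\left<\omega^\vee_{i_{k+1}},\alpha\right>\right)=\max_{k}\left<\omega^\vee_{i_k},\alpha\right>$, which holds because the tuple $\left(\left<\omega^\vee_i,\alpha\right>\right)_{i}$ is unimodal for a straight labelling in types ABCFG. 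Your proposal is honest that it is a plan rather than a proof, and your diagnosis of the central difficulty (the collapse of $\ell$-weight multiplicities below the top $N$ levels, where Propositions \ref{twonodes} and \ref{threenodes} stop) is accurate.

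Two specific points in your plan are, however, weaker than you present them. First, the base case $|J|=1$ is not ``exactly the information packaged in \cite{HJ}'': the identification of the stable normalized KR character with $\prod_{\alpha\in\Delta^+}\left(1-\overline{\alpha}^{-1}\right)^{-\left<\omega^\vee_j,\alpha\right>}$ is essentially the HKOTY fermionic-formula prediction in the large-level limit, and the paper only verifies it type-by-type where explicit decompositions are available (for the type $\mathrm C_n$ long node and the type $\mathrm E$ end nodes it is only matched against \emph{conjectured} decompositions of \cite{HKOTY}). So your induction starts from a statement that is itself open. Second, the inductive step has no mechanism: Lemma \ref{reslem} gives only a \emph{necessary} condition (that the restrictions to $\hat U_{J_1\cup\{j_0\}}$ and $\hat U_{\{j_0\}\cup J_2}$ be least), and the characters of these two submodules do not determine $\chi(\L(\bs f))$ --- the weight spaces involving simple roots from both sides of $j_0$ are exactly the ones the conjecture is about, and neither the expansion lemma nor any stated extension of Proposition \ref{threenodes} reaches them. (Note that in the proof of Theorem \ref{lathm} the factorization of Lemma \ref{factorlem} is only applied to weights supported on \emph{pairwise commuting} subdiagrams, which is precisely what fails here.) If you want an unconditional result within the paper's framework, the viable routes are the ones it actually takes: prove the required instances of Conjecture \ref{charconj}, or import explicit $\uqg$-module decompositions of minimal affinizations in the remaining types.
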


In many situations this conjecture can be deduced from existing results for finite dimensional modules. 
\begin{prop}
Conjecture \ref{lagpvm} is true in the following cases. \begin{enumerate}
\item In type $\mathrm A_n$, $n \in \Z_{\geq 1}$. 
\item In type $\mathrm G_2$ for Kirillov-Reshetikhin modules associated to the long node.
\item In type $\mathrm B_n$ with $J\subseteq \{1,2,3\}$ (where $n$ is the short node). 
\end{enumerate}
In addition, it agrees with the conjectured $\uqg$-module decompositions given in \cite{HKOTY} for the Kirillov-Reshetikhin modules for the long node in type $\mathrm C_n$. 
\end{prop}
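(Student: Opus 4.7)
The proof proceeds case by case by a common three-step strategy. First, one takes as input a known $\uqg$-module decomposition of the least affinization $\L(\bs f)$ of $V(\overline\lambda)$ for integer dominant $\lambda$, of the form $\L(\bs f)\cong\bigoplus_{\mu}V(\overline\mu)^{\oplus m_{\lambda,\mu}}$. Second, one invokes the analytic-continuation statement, Corollary \ref{accor}, to pass to generic $\lambda$: this replaces each finite-dimensional factor $V(\overline\mu)$ by the Verma module with character given by the Weyl denominator, $\overline\mu\prod_{\alpha\in\Delta^+}(1-\overline\alpha^{-1})^{-1}$. Third, one sums the resulting series of Verma characters and identifies it, after rearranging geometric series, with the product $\overline\lambda\prod_{\alpha}(1-\overline\alpha^{-1})^{-m_\alpha}$ of Conjecture \ref{lagpvm}, where $m_\alpha=\max_{j\in J}\langle\omega^\vee_j,\alpha\rangle$.

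For case (1), in type $\mathrm A_n$ the least affinization is an evaluation module, so $\L(\bs f)\cong V(\overline\lambda)$ as $\uqg$-modules and no analytic continuation is needed. The irreducible $V(\overline\lambda)$ with $\lambda_i=0$ for $i\notin J$ and $\lambda_j\notin\Z$ for $j\in J$ is the generalized Verma module induced from the Levi subalgebra of $I\setminus J$ with trivial action, whose character is $\overline\lambda\prod_{\alpha}(1-\overline\alpha^{-1})^{-1}$, the product taken over those positive roots $\alpha$ whose support meets $J$. In type $A$ every $\langle\omega^\vee_j,\alpha\rangle$ lies in $\{0,1\}$, so $m_\alpha$ equals $1$ precisely when the support of $\alpha$ meets $J$, and $0$ otherwise; the two expressions coincide.

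For cases (2) and (3), together with the consistency check in type $\mathrm C_n$, one takes as input the explicit $\uqg$-decompositions of the relevant minimal affinizations at integer weight: the long-node Kirillov--Reshetikhin decomposition in $\mathrm G_2$; the minimal-affinization decompositions in $\mathrm B_n$ with $J\subseteq\{1,2,3\}$ from \cite{Cminaffrank2,CPminaffBCFG}; and the conjectural \cite{HKOTY} formula for long-node KR modules in $\mathrm C_n$. Each is a finite sum over a small combinatorial indexing set. After analytic continuation one replaces each summand by its Verma character and organizes the result as a single rational function in the $\overline\alpha^{-1}$.

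The principal obstacle is step three: to recognize the resulting sum as the target product, each factor $(1-\overline\alpha^{-1})^{-m_\alpha}$ with $m_\alpha\geq 2$ must emerge as a geometric progression summed over a suitable subset of the combinatorial indexing set, and this matching is not automatic. In each of the cases at hand, however, the positive roots with $m_\alpha\geq 2$ are few in number and localized near the relevant nodes, the decompositions are fully explicit, and only a handful of geometric series need to be resummed; the identification therefore reduces to a tractable root-by-root verification of the form already illustrated in the $\mathrm B_2$ argument of Proposition \ref{c1p}.
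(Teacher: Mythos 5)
Your proposal is correct and follows essentially the same route as the paper: take the known $\uqg$-module decomposition at integral highest weight (evaluation/Verma module in type A, the Kirillov--Reshetikhin decomposition $\bigoplus_{r=0}^k V(\overline{r\omega_2})$ in $\mathrm G_2$, the decompositions of \cite{Moura} in type $\mathrm B_n$, and the \cite{HKOTY} formulae in type $\mathrm C_n$), pass to generic weight by the analytic continuation of Corollary \ref{accor}, and resum the resulting geometric series of Weyl-denominator characters exactly as in the $\mathrm B_2$ case of Proposition \ref{c1p}. The only cosmetic difference is the source cited for the $\mathrm B_n$ decompositions (the paper uses \cite{Moura}).
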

\begin{proof}
The arguments are as in the proof of Proposition \ref{c1p}. In type A, the given character formula is just the usual character of the generic parabolic Verma module in type A.  

In type $\mathrm G_2$ the least affinization of $V(\overline{k\omega_2})$, where $\alpha_2$ is the long root, is a Kirillov-Reshetikhin module whose $\uqg$-module decomposition is $\bigoplus_{r=0}^k V(\overline{r\omega_2})$; see \cite{ChariMoura}. Since $\omega_2 = 3\alpha_1+2\alpha_2$, the result follows as in case of $\mathrm B_2$ in the proof of Proposition \ref{c1p}.
Similarly, in type $\mathrm B_n$ with $J\subseteq \{1,2,3\}$, the result follows from $\uqg$-module decompositions that can be found in \cite{Moura}.  
\end{proof}

Conjecture \ref{lagpvm} can be generalized somewhat further, as follows. 
Given $X\subseteq I$, let $\overline X$ denote the smallest connected subset of $I$ such that $X\subseteq \overline X$. 
For any $i\neq j\in I$, the subdiagram $\overline{\{i,j\}}$ admits a \emph{straight labelling} $(i=j_1,j_2,\dots,j_{K-1},j_{K}=j)$ of its nodes: that is, one in which $B_{j_k,j_\ell}< 0$ if and only if $|k-\ell|= 1$ (and in which $j_k\neq j_\ell$ if $k\neq \ell$). 
Given a rational $\ell$-weight $\bs f$, we may factor the $|I|$-tuple of rational functions $(f_i(u))_{i\in I}$ as follows. For each $i\in I$, 
\be f_i(u) = \prod_{a\in \Cx/q^\Z} f^{(a)}_i(u),\ee
where every pole and every zero of $f^{(a)}_i(u)$ lies in $aq^\Z$. 

\begin{conj}\label{charconj}
Suppose that for each $a\in \Cx/q^\Z$ and each $i\in I$ there is an $n_a(i)\in \Z_{\geq -1}$, an $X\in \C$ and an $r\in \Z$ such that 
\be\nn f_i^{(a)}(u)= X(1-aq^ru)^{n_a(i)}.\ee 
Letting   
\be S^{(a)} := \left\{ i \in I : n_a(i) = -1 \right\}, \quad 
    U^{(a)} := \left\{ i \in I : n_a(i) > 0 \right\}, \nn\ee 
suppose further that
\begin{enumerate}
\item for all $i,j\in S^{(a)}$ with $i\neq j$, we have $U^{(a)}\cap \overline{\{i,j\}} \neq \emptyset$; 
\item for every $i\in S^{(a)}$, $j\in U^{(a)}$ such that $\overline{\{i,j\}} \cap (S^{(a)}\cup U^{(a)}) = \{i,j\}$, there is a straight labelling $(i=j_1, j_2, j_3, \dots, j_{K}=j)$ of $\overline{\{i,j\}}$
such that
\be f_{i}^{(a)}(u) = \frac{X}{1-aq^ru},\quad f_{j}^{(a)}(u) = W \left(1-aq^{r-\sum_{t=1}^{K-1} B_{j_t,j_{t+1}}}\right)^n,\nn\ee
for some $r\in \Z$, $n\in \Z_{> 0}$ and $X,W\in \Cx$; 
\item for every $j\in U^{(a)}$,  \be n_a(j)\geq |N_a(j)|-1 \quad\text{where}\quad
 N_a(j) := \left\{ i\in S^{(a)}: \overline{\{i,j\}} \cap (S^{(a)}\cup U^{(a)}) = \{i,j\} \right\}.\nn\ee
\end{enumerate}
Then 
\be \chi(\L(\bs f)) = \wt(\bs f) \prod_{a\in \Cx/q^\Z} \chi^{(a)},\quad \chi^{(a)} = \prod_{\alpha\in \Delta^+} \left(\frac 1 {1-\overline{\alpha}^{-1}}\right)^{
             \max\left(0,\sum\limits_{i\in S^{(a)}} \left<\omega^\vee_i,\alpha\right> - \sum\limits_{j\in U^{(a)}} n_a(j) \left<\omega^\vee_j, \alpha\right> \right)}.\label{charconjform}\ee
\end{conj}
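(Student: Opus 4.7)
The plan is to reduce Conjecture \ref{charconj} to Conjecture \ref{lagpvm} via a two-level tensor-product factorization. First, the hypotheses imply that $\bs f$ decomposes across spectral orbits as $\bs f = \prod_{a \in \Cx/q^\Z} \bs f^{(a)}$, and I would try to establish the tensor factorization $\L(\bs f) \cong \bigotimes_a \L(\bs f^{(a)})$ as $\uqgh$-modules. By Proposition \ref{irrbothprop}, it suffices to prove irreducibility of this tensor product, and in the finite-dimensional setting this follows from classical cyclicity arguments since the poles lie in distinct $q^\Z$-orbits. To extend to $\hat\cat$, one would adapt the method used in the proof of Theorem \ref{sl2thm} by restricting to each $\uqslth$-subalgebra via Lemma \ref{reslem} and controlling the $\ell$-weight decomposition via Corollary \ref{taucor} together with the expansion lemma (Lemma \ref{rk1lem}). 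Alternatively, the analytic continuation of Proposition \ref{anacprop} can reduce the question to finite-dimensional modules, where cyclicity results are available in the literature.

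Having reduced to a single orbit $a$, conditions (1)--(3) of Conjecture \ref{charconj} describe a configuration in which $S^{(a)}$-nodes (simple poles) are separated along the Dynkin diagram by $U^{(a)}$-nodes (polynomial zeros), with each pair of adjacent $S$- and $U$-nodes constrained by condition (2) to the precise pole--zero relation governing least affinizations in Theorem \ref{lathm}. I would then decompose $\bs f^{(a)}$ further as a product $\prod_k \bs g_k$ of ``elementary'' pieces, each $\bs g_k$ being the highest $\ell$-weight of a least affinization of a two-node parabolic Verma module supported at an $S$-node and an adjacent $U$-node (as in Corollary \ref{twonc}). A $U$-node $j$ with $|N_a(j)|$ adjacent $S$-neighbours would be split so that $|N_a(j)| - 1$ units of $n_a(j)$ are ``shared'' (one unit per pair of consecutive elementary pieces meeting at $j$), with the remaining $n_a(j) - |N_a(j)| + 1$ units forming truncation factors of Kirillov--Reshetikhin type. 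A second irreducibility argument, analogous to the first, would give $\L(\bs f^{(a)}) \cong \bigotimes_k \L(\bs g_k)$ as $\uqg$-modules; the character then follows by multiplying the Weyl-denominator factors supplied by Conjecture \ref{lagpvm}, with the $\max(0,\cdot)$ in (\ref{charconjform}) arising naturally from the partial cancellation of $\omega_j^\vee$-contributions at shared $U$-nodes and the further subtraction due to the Kirillov--Reshetikhin truncations.

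The main obstacle will be establishing tensor-product irreducibility in $\hat\cat$ at both levels. Classical proofs for finite-dimensional quantum affine modules exploit the finite degree of Drinfeld polynomials and Weyl-group/spectral translations, neither of which is directly available here. A feasible route combines Proposition \ref{stepprop} (which controls how $\ell$-weights can evolve under the simple-root action) with the analytic continuations of Proposition \ref{anacprop} and Corollary \ref{accor}, so as to deduce each needed irreducibility in $\hat\cat$ from its finite-dimensional counterpart by interpolating in the highest-weight parameters. A secondary obstacle is that the decomposition $\bs f^{(a)} = \prod_k \bs g_k$ is not unique when the excess $n_a(j) - |N_a(j)| + 1$ is positive, so one must verify that the resulting $\uqg$-character is independent of the chosen factorization; this should follow formally from the observation that the target formula (\ref{charconjform}) depends only on the combinatorial data $(S^{(a)}, U^{(a)}, n_a(\cdot))$ and not on any particular elementary decomposition.
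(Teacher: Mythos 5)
There is no proof in the paper to compare against: the statement is Conjecture \ref{charconj}, which the authors support only by computer experiments (via the algorithm of \cite{FM}) and by consistency checks in special cases. What the paper actually establishes is the reverse implication to the one you propose: it derives Conjecture \ref{lagpvm} \emph{from} Conjecture \ref{charconj} (the ``Proof of Conjecture \ref{lagpvm} assuming Conjecture \ref{charconj}''), and it verifies Conjectures \ref{lagv} and \ref{lagpvm} directly only in types $\mathrm A_n$, $\mathrm B_2$ and a few other cases. Your plan to reduce Conjecture \ref{charconj} to Conjecture \ref{lagpvm} therefore inverts the logical dependency, and even if the reduction were sound it would rest on a statement that is itself unproven in general; so the proposal cannot close the argument.

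The reduction itself also has a substantive gap. The factorization across distinct $q^\Z$-orbits, $\L(\bs f)\cong\bigotimes_a\L(\bs f^{(a)})$, is plausible, but the within-orbit factorization $\L(\bs f^{(a)})\cong\bigotimes_k\L(\bs g_k)$ cannot hold in the generality you need. Condition (2) of the conjecture places the pole at an $S$-node and the zero at an adjacent $U$-node in exactly the resonant relative position of Theorem \ref{lathm}, i.e.\ within a single $q^\Z$-orbit and at the critical spacing; this is precisely the regime in which tensor products of the elementary pieces are \emph{reducible}, so $\L(\bs f^{(a)})$ is generally a proper subquotient of $\bigotimes_k\L(\bs g_k)$ rather than the whole tensor product. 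This is visible already at the level of the character: if the tensor product were irreducible, $\chi^{(a)}$ would be the product of the characters of the $\L(\bs g_k)$, whose exponents are sums of individual $\max(0,\cdot)$ terms, whereas (\ref{charconjform}) has a single exponent $\max(0,\sum_{i\in S^{(a)}}\langle\omega_i^\vee,\alpha\rangle-\sum_{j\in U^{(a)}}n_a(j)\langle\omega_j^\vee,\alpha\rangle)$, and $\max(0,x-y)$ does not split as a sum of maxima over elementary pieces (already for one $S$-node adjacent to a $U$-node with $n_a(j)=2$). Finally, Proposition \ref{anacprop} cannot supply the missing irreducibility: it controls weight-space dimensions of a single irreducible $\L(\bs f(x))$ as the parameter varies, and explicitly allows those dimensions to drop at special parameter values; it says nothing about irreducibility of a tensor product, which is precisely a statement about such special, non-generic values.
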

Note that condition (3) is redundant except when the node $j$ is trivalent.\\ 
To see that Conjecture \ref{charconj} does entail Conjecture \ref{lagpvm}, let us give the following.
\begin{proof}[Proof of Conjecture \ref{lagpvm} assuming Conjecture \ref{charconj}]
Let $i_1< i_2 <\dots< i_K$ be the nodes of $J$. Without loss of generality, suppose $\bs f$ obeys condition (I) in Theorem \ref{lathm}. (If not, reverse the ordering of the Dynkin diagram.) 
Now we apply Conjecture \ref{charconj}. For each $1\leq k\leq K$ there is exactly one $a\in \Cx/q^\Z$ such that $i_k\in S^{(a)}$; and for this $a$, $U^{(a)} = \{i_{k+1}\}$ when $k<K$, while $U^{(a)}= \emptyset$ when $k=K$. Thus Conjecture \ref{charconj} implies
\be\nn \chi(\L(\bs f)) = \overline{\lambda} \prod_{k=1}^{K}\prod_{\alpha\in\Delta^+} \left(\frac{1}{1-\overline{\alpha}^{-1}}\right)^{\max\left(0,\left<\omega^\vee_{i_{k}},\alpha\right> -\left<\omega^\vee_{i_{k+1}}, \alpha\right>\right)}, 
\ee
where for convenience we define $\omega^\vee_{i_{K+1}}:=0$.
The result follows provided we can show that
\be\nn\sum_{k=1}^K\max\left(0,\left<\omega^\vee_{i_{k}},\alpha\right> -\left<\omega^\vee_{i_{k+1}}, \alpha\right>\right)= \max_{1\leq k\leq K} \left<\omega^\vee_{i_k},\alpha\right>.\ee
And indeed, this equality is a consequence of the following statement, which can be seen by case-by-case inspection: Let $\g$ be of type $\mathrm{ABCF}$ or $\mathrm G$ and rank $N$, and pick a straight labelling of the nodes of the Dynkin diagram; then for any positive root $\alpha$, the $N$-tuple $\left(\left<\omega^\vee_i,\alpha\right>\right)_{1\leq i\leq N}$ is unimodal, i.e. there is a $k$ such that
\be\nn \left<\omega^\vee_1,\alpha\right> \leq \left<\omega^\vee_2,\alpha\right> \leq \dots \leq 
\left<\omega^\vee_k,\alpha\right> 
\geq \dots \geq \left<\omega^\vee_{N-1},\alpha\right> \geq \left<\omega^\vee_N,\alpha\right>.\nn\ee
As an obvious consequence, all sub-tuples are also unimodal. 
\end{proof}
For Kirillov-Reshetikhin modules on end nodes in types $\mathrm{E_6, E_7, E_8}$, Conjecture \ref{charconj} can be matched against the conjectured $\uqg$-module decompositions of \cite{HKOTY}.

It is known \cite{CPminaffireg} that in types D and E the classification of minimal affinizations becomes more subtle in  \emph{irregular} cases: that is, for highest weights orthogonal to the simple root associated to the trivalent node. All other highest weights are called \emph{regular}. We believe that Conjecture \ref{charconj} applies in particular to minimal affinizations of parabolic Verma modules whose highest weights are regular.

Conjecture \ref{charconj} also applies to certain other modules, as the following examples illustrate.

\subsection*{Type $\mathrm A_4$.} \begin{tikzpicture}[baseline=-5,scale=.8] \draw[draw=white,double=black,very thick] (1,0) -- ++(2,0) -- ++(0:1) ;    
\filldraw[fill=white] (1,0) circle (1mm) node [below] {$1$};    
\filldraw[fill=white] (3,0) circle (1mm) node [below] {$3$};    
\filldraw[fill=white] (2,0) circle (1mm)  node [below] {$2$};    
\filldraw[fill=white] (3,0)++(0:1) circle (1mm)  node [below] {$4$};;    
\end{tikzpicture}
\begin{itemize}\item If     
\be f^{(a)}_1(u) = \frac{1}{1-au},\quad f^{(a)}_2(u) = 1-aq^3u,\quad f^{(a)}_3(u)= \frac{1}{1-au},\quad f^{(a)}_4(u) = 1\nn\ee
then 
\be \chi^{(a)} = \frac{1}{1- \overline{\alpha_1}^{-1}}\frac{1}{1- \overline{\alpha_3}^{-1}}
               \frac{1}{1- \overline{\alpha_3}^{-1} \overline{\alpha_4}^{-1}}
   \frac{1}{1- \overline{\alpha_1}^{-1} \overline{\alpha_2}^{-1} \overline{\alpha_3}^{-1}}
\frac{1}{1- \overline{\alpha_1}^{-1} \overline{\alpha_2}^{-1} \overline{\alpha_3}^{-1}
             \overline{\alpha_4}^{-1}}.\nn\ee
\item If 
\be f^{(a)}_1(u) = 1-aq^3u,\quad f^{(a)}_2(u) = \frac 1{1-au},\quad f^{(a)}_3(u)= 1-aq^3u,\quad f^{(a)}_4(u) = 1\nn\ee
then 
\be \nn\chi^{(a)} = \frac{1}{1- \overline{\alpha_2}^{-1}}.\ee
\end{itemize}
\subsection*{Type $\mathrm F_4$}
\begin{tikzpicture}[baseline =-5,scale=.8] 
\draw[thick] (1,0)  -- (2,0);
\draw[double,thick] (2,0) -- (3,0);
\draw[thick] (3,0) -- (4,0);
\draw (2.4,.2) -- (2.6,0) -- (2.4,-.2);
\filldraw[fill=white] (1,0) circle (1mm) node [below] {$1$};    
\filldraw[fill=white] (3,0) circle (1mm) node [below] {$3$};    
\filldraw[fill=white] (2,0) circle (1mm)  node [below] {$2$};    
\filldraw[fill=white] (3,0)++(0:1) circle (1mm)  node [below] {$4$};;    
\end{tikzpicture}
\begin{itemize}\item If     
\be f^{(a)}_1(u) = 1,\quad f^{(a)}_2(u) = 1-aq^5u,\quad f^{(a)}_3(u)= \frac{1}{1-au},\quad f^{(a)}_4(u) = 1-aq^3u\nn\ee
then 
\be \nn\chi^{(a)} = \frac{1}{1- \overline{\alpha_3}^{-1}}
               \frac{1}{1- \overline{\alpha_2}^{-1} \overline{\alpha_3}^{-2}}
   \frac{1}{1- \overline{\alpha_1}^{-1} \overline{\alpha_2}^{-1} \overline{\alpha_3}^{-2}}
.\ee
\item If\be f^{(a)}_1(u) = 1-aq^7u,\quad f^{(a)}_2(u) = 1,\quad f^{(a)}_3(u)= \frac{1}{1-au},\quad f^{(a)}_4(u) = 1-aq^3u\nn\ee
then 
\bea \chi^{(a)} &=& \frac{1}{1- \overline{\alpha_3}^{-1}}
               \frac{1}{1- \overline{\alpha_2}^{-1} \overline{\alpha_3}^{-1}}
  \left(\frac{1}{1- \overline{\alpha_2}^{-1} \overline{\alpha_3}^{-2}}\right)^2
 \frac{1}{1- \overline{\alpha_1}^{-1} \overline{\alpha_2}^{-1} \overline{\alpha_3}^{-2}} \frac{1}{1- \overline{\alpha_2}^{-1} \overline{\alpha_3}^{-2} \overline{\alpha_4}^{-1}}
\nn\\              
 &&\times\frac{1}{1- \overline{\alpha_1}^{-1} \overline{\alpha_2}^{-2} \overline{\alpha_3}^{-2}}              
\frac{1}{1- \overline{\alpha_1}^{-1} \overline{\alpha_2}^{-2} \overline{\alpha_3}^{-3} \overline{\alpha_4}^{-1}             }
\frac{1}{1- \overline{\alpha_1}^{-1} \overline{\alpha_2}^{-2} \overline{\alpha_3}^{-4} \overline{\alpha_4}^{-2}             }
\frac{1}{1- \overline{\alpha_1}^{-1} \overline{\alpha_2}^{-3} \overline{\alpha_3}^{-4} \overline{\alpha_4}^{-2}}             
.\nn\eea
\end{itemize}

To discuss the condition (3) in Conjecture \ref{charconj}, let us consider  type $\mathrm D_5$, \be\nn\begin{tikzpicture}[scale=.8] \draw[draw=white,double=black,very thick] (1,0) -- ++(2,0) -- ++(60:1) ++(60:-1) -- ++(-60:1) ;    
\filldraw[fill=white] (1,0) circle (1mm) node [below] {$1$};    
\filldraw[fill=white] (3,0) circle (1mm) node [right] {$3$};    
\filldraw[fill=white] (2,0) circle (1mm)  node [below] {$2$};    
\filldraw[fill=white] (3,0)++(60:1) circle (1mm)  node [right] {$4$};;    
\filldraw[fill=white] (3,0)++(-60:1) circle (1mm) node [right] {$5$};;    
\end{tikzpicture}.\ee
The reason for including the condition (3) is that computer experiments suggest that the formula for $\chi^{(a)}$ in (\ref{charconjform}) is not valid for
\be f^{(a)}_1(u) = 1-aq^4u,\quad f^{(a)}_2(u) = \frac{1}{1-aqu},\quad f^{(a)}_3(u) = 1-aq^4u,\quad f^{(a)}_4(u)= \frac{1}{1-aqu},\quad f^{(a)}_5(u) = \frac{1}{1-aqu}.\nn\ee
Here $n_a(3) = 1 <  3-1 = |N_a(3)| - 1$, so condition (3) is not satisfied.  
On the other hand, the following does fall within the scope of the conjecture:
\be f^{(a)}_1(u) = 1-aq^4u,\quad f^{(a)}_2(u) = \frac{1}{1-aqu},\quad f^{(a)}_3(u) = (1-aq^4u)^2,\quad f^{(a)}_4(u)= \frac{1}{1-aqu},\quad f^{(a)}_5(u) = \frac{1}{1-aqu};\nn\ee
and computer checks indicate that the formula for $\chi^{(a)}$ is valid in this case.

Finally, we find that in certain cases, the formula (\ref{charconjform}) appears to be valid even though condition (3) is not satisfied. For example if
\be f^{(a)}_1(u) = 1,\quad f^{(a)}_2(u) = \frac{1}{1-aqu},\quad f^{(a)}_3(u) = 1-aq^4u,\quad f^{(a)}_4(u)= \frac{1}{1-aqu},\quad f^{(a)}_5(u) = \frac{1}{1-aqu}\nn\ee
then the formula in (\ref{charconjform}) for $\chi^{(a)}$ does appear to hold.

\def\cprime{$'$}
\providecommand{\bysame}{\leavevmode\hbox to3em{\hrulefill}\thinspace}
\providecommand{\MR}{\relax\ifhmode\unskip\space\fi MR }
\providecommand{\MRhref}[2]{%
  \href{http://www.ams.org/mathscinet-getitem?mr=#1}{#2}
}
\providecommand{\href}[2]{#2}

\end{document}